\newtheorem {Theorem}    {Theorem}[section]
\newtheorem*{TheoremA}{Theorem A}
\newtheorem*{TheoremB}{Theorem B}
\newtheorem {Lemma}      [Theorem]    {Lemma}
\newtheorem {Corollary}  [Theorem]    {Corollary}
\newtheorem {Proposition}[Theorem]    {Proposition}
\newtheorem {Claim}{Claim}
\newtheorem {Claim1}{Claim}
\newtheorem*{Conjecture*}   {Conjecture}
\newtheorem{Conjecture} {Conjecture}
\numberwithin{equation}{section}
\newcounter{DM@bibnum}
\newcommand {\N} {{\mathbb N}}
\newcommand {\F} {{\mathbb F}}
\newcommand {\Q} {{\mathbb Q}}
\newcommand {\R} {{\mathbb R}}
\newcommand {\Z} {{\mathbb Z}}
\newcommand {\G} {{\mathbf G}}
\newcommand {\LL} {{\mathcal L}}
\newcommand {\OO} {{\mathcal O}}
\newcommand{\la}{\langle}
\newcommand{\ra}{\rangle}
\newcommand{\nor}{\unlhd}
\DeclareMathOperator{\GL}{GL}
\DeclareMathOperator{\PGL}{PGL}
\DeclareMathOperator{\SL}{SL} 
\DeclareMathOperator{\Aut}{Aut}
\DeclareMathOperator{\Mat}{Mat}
\DeclareMathOperator{\Br}{Br}
\DeclareMathOperator{\Gal}{Gal}
\DeclareMathOperator{\tr}{tr}
\DeclareMathOperator{\End}{End}
\DeclareMathOperator{\rk}{rk}
\DeclareMathOperator{\Irr}{Irr}
\newcommand{\sli}{{\mathfrak{sl}}}
\newcommand{\m}{{\mathbf m}}
\begin{document}

\title{Finite 2-groups with odd number of conjugacy classes}

\author{Andrei Jaikin-Zapirain}
\address{Departamento de Matem\'aticas,
  Universidad Aut\'onoma de Madrid,  and
  Instituto de Ciencias Matem\'aticas - CSIC, UAM, UCM, UC3M.
    28049 Madrid, Spain.  }
\email{andrei.jaikin@uam.es}
\thanks{}

\author{Joan Tent}
\address{Departament d'\`Algebra, Universitat de Val\`encia.
    	46100 Burjassot, Val\`encia, Spain.}
\email{joan.tent@uv.es}

\thanks{This paper is partially supported by the grant MTM 2011-28229-C02-01 and MTM2014-53810-C2-01 of the Spanish MEyC and by the ICMAT Severo Ochoa project SEV-2011-0087. The second author has also been supported by PROMETEOII/2015/011.}

\subjclass[2000]{Primary 20D15 ; Secondary  20C15, 20E45, 20E18}

\keywords{2-groups, real classes, $p$-adic groups}
\dedicatory{}

\begin{abstract}
In this paper we consider finite 2-groups with odd number of  real conjugacy classes. On one hand we show that  if $k$ is an odd natural number less than 24, then there are only finitely many finite 2-groups with exactly $k$ real conjugacy classes. On the  other hand we construct infinitely many finite 2-groups with exactly 25 real conjugacy classes. Both resuls are proven using pro-$p$ techniques and, in particular, we use  the Kneser classification of semi-simple $p$-adic algebraic groups.
\end{abstract}

\maketitle

\section{Introduction}
We recall that an element in a group $G$ is  {\bf real} if it is conjugate in $G$ to its inverse,  and a conjugacy class of $G$ is {\bf real} if it
consists of real elements. 
It is well known that certain conditions on 
the set of real conjugacy classes of a finite group may strongly influence the
structure of the group,
the first example of this possibly being that a finite group contains no non-trivial real conjugacy
classes if and only if it has odd order. In the present paper we study finite
$2$-groups satisfying a particular condition of this nature, namely that its number 
of real conjugacy classes is 
an odd natural number.

A theorem by M. Isaacs, G. Navarro and J. Sangroniz \cite{INS} characterizes
the finite $2$-groups of maximal class as those possessing precisely $5$
rational-valued irreducible characters.   It is easy to see that if a $2$-group
has $5$ real irreducible characters, then all of them are rational-valued, and
thus the group has maximal class.   As a consequence of this, 
it is deduced in \cite{INS} that there are (up to isomorphism) only
$3$ finite $2$-groups with $5$ real conjugacy classes, all of them having maximal class. 
Now, a considerably easier fact is that there are no
$2$-groups with $3$ real conjugacy classes.   As a continuation of these results, 
it is proved in \cite{ST} 
that a finite $2$-group with $7$ real classes has order at most $128$.

Indeed, it is a relevant observation that most $2$-groups of small order 
appear to
have an even number of conjugacy classes. 
Since the real classes in a (finite) group are those classes
fixed by the action induced by inversion of elements, 
it is clear that the
parity of the number of conjugacy classes of a finite group 
coincides with the parity of the number of its real classes.
  Then  most small $2$-groups
have an even number of real conjugacy classes.  
In order to explain this phenomenon,
Josu Sangroniz proposed the following conjecture \cite{ST}, which
motivates our work. 

\begin{Conjecture*} If $r$ is an odd natural number, then there are only finitely many finite 2-groups with exactly $r$ real conjugacy classes.
\end{Conjecture*}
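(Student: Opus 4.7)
I would argue by contradiction: suppose that for some odd $r$ there is an infinite sequence $G_1,G_2,\ldots$ of pairwise non-isomorphic finite $2$-groups, each with exactly $r$ real conjugacy classes and with $|G_n|\to\infty$. The aim is to extract a coherent pro-$2$ or $p$-adic algebraic limit from this sequence and then derive a structural impossibility from the bound on real classes.

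First I would look for a uniform structural bound implied by the hypothesis. Every involution of $G_n$ generates a real class, and distinct involutions lie in distinct classes, so the number of involutions of $G_n$ is at most $r-1$. Pushing this further, one should be able to control the $2$-rank of $Z(G_n)$ and, inductively, of successive quotients in the lower central series, by analysing how the inversion automorphism of $G_n$ acts on each factor. My hope would be to translate ``bounded number of real classes'' into a bound on the coclass of $G_n$, possibly after also bounding the nilpotency class.

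Granted such a coclass bound, Leedham-Green's coclass theorems yield, after passing to a subsequence, a common infinite pro-$2$ quotient $P$ containing an open uniform subgroup. Via the Lazard correspondence $P$ corresponds to a $\Z_2$-Lie lattice $L$ inside a $\Q_2$-Lie algebra $\mathfrak{g}$, and the group-theoretic inversion $x\mapsto x^{-1}$ becomes negation on $L$. Real classes of a finite quotient $P/N$ then correspond to adjoint orbits in $L/N$ that are invariant under $X\mapsto -X$. At this point the Kneser classification of semisimple $p$-adic algebraic groups should enter, to analyse the possible $\Q_2$-forms of the semisimple part of $\mathfrak{g}$; for each form one would then try to show that the number of $(-1)$-invariant adjoint orbits in $L/2^n L$ grows without bound with $n$, contradicting the assumption that this count equals $r$ for infinitely many $n$.

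The principal obstacle, in my view, is the first step: producing a coclass (or even a nilpotency-class) bound purely from the size of the real spectrum. Without it, the $G_n$ need not lie in any common pro-$2$ envelope, and the subsequent Lie-theoretic and algebraic machinery has no starting point. A second, potentially very delicate, difficulty lies in the orbit-counting step: certain $\Q_2$-forms in the Kneser list may conceivably admit only a bounded number of $(-1)$-invariant orbits in their finite quotients, which would indicate that the conjecture needs qualification for some specific values of $r$ rather than being true in full generality.
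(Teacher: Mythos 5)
The statement you are trying to prove is precisely the conjecture that the paper is written to refute: Theorem B exhibits infinitely many finite $2$-groups with exactly $25$ real conjugacy classes (finite quotients of a Sylow pro-$2$ subgroup of $\PGL_1(D)$ for $D$ a division $\Q_2$-algebra of dimension $9$), so no proof of the conjecture in full generality can succeed; it holds only for $r\le 23$ (Theorem A). Your closing caveat --- that some $\Q_2$-forms in Kneser's list might admit only a bounded number of inversion-invariant orbits in their finite quotients --- is exactly what happens for the anisotropic form $\sli_1(D)$ with $\dim_{\Q_2}D=9$, and it is the source of a counterexample rather than a removable technicality.

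Even restricted to the range where the statement is true, your first step has a genuine gap. From ``$r$ real classes'' you cannot bound the number of involutions (distinct involutions may be conjugate; only the number of \emph{classes} of involutions is at most $r-1$), and you certainly cannot bound the coclass: the $r=25$ examples above have coclass tending to infinity, so no argument can extract a coclass bound from a bound on real classes alone, and Leedham-Green's coclass theorems are not the right vehicle for producing the limit object. What the hypothesis does give, via the fact that real characters pass to quotients (Corollary \ref{classquot}) together with Lemma \ref{fgroupsrclases}, is a bound on the number of classes of involutions in \emph{every} quotient, hence a bound on the \emph{rank}; this places the groups among quotients of $2$-adic analytic pro-$2$ groups, and the common pro-$2$ envelope is built by a K\"onig's-lemma argument on the tree of finite $2$-groups with exactly $r$ real characters, not by coclass theory. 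From there the paper's route (reduction to just infinite quotients, the descent of Theorem \ref{embedding} to Sylow pro-$2$ subgroups of $\Aut(\sli_1(D))$, Kneser's classification, and an explicit count in $\PGL_1(D)$) broadly matches the spirit of your later paragraphs, but the decisive computation shows the count of real characters is exactly $25$ when $\dim_{\Q_2}D=9$ and exceeds $25$ in every other nontrivial case --- which is what simultaneously proves the conjecture for $r\le 23$ and disproves it for $r=25$.
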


  Our  purpose in this paper is to give an answer to this problem. 
More precisely, we shall confirm the conjecture for  $r\le 23$ and show that it is false when $r=25$.

\begin{TheoremA}\label{kless23} Let $r\le 23$ be an odd natural number. Then there are only finitely many finite 2-groups with exactly $r$ real conjugacy classes.
\end{TheoremA}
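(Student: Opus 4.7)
The plan is to argue by contradiction: assume that for some odd $r\le 23$ there is an infinite sequence $\{G_n\}$ of pairwise non-isomorphic finite $2$-groups, each having exactly $r$ real conjugacy classes. I would extract from $\{G_n\}$ a pro-$2$ limit $P$, show that it is $p$-adic analytic of small dimension, and then classify the possibilities using the Kneser classification of semi-simple $p$-adic algebraic groups, deriving a contradiction in every case.

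A first observation that makes the limit construction possible is that the minimum number of topological generators $d(G_n)$ is uniformly bounded. Indeed, the Frattini quotient $G_n/\Phi(G_n)$ is elementary abelian of order $2^{d(G_n)}$ and has all of its $2^{d(G_n)}$ irreducible characters real; pulling them back via inflation yields $2^{d(G_n)}\le r$. Working in the compact space of marked $d$-generated pro-$2$ groups and passing to a convergent subsequence, one obtains an infinite pro-$2$ group $P$ with the property that every open normal subgroup $N\nor P$ satisfies $k_{\mathrm{re}}(P/N)\le r$, where $k_{\mathrm{re}}$ denotes the number of real conjugacy classes.

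The next step is to show that $P$ is a $p$-adic analytic pro-$2$ group of bounded dimension. The plan here is to establish a quantitative lower bound of the form $k_{\mathrm{re}}(G)\ge f(\mathrm{cc}(G))$ for finite $2$-groups of bounded generator number, with $f$ an unbounded function of the coclass $\mathrm{cc}(G)$. Combined with the previous step, this forces the finite quotients of $P$ to have uniformly bounded coclass, and the Leedham-Green--McKay--Shalev coclass theorems then imply that $P$ is $p$-adic analytic. Via the Lazard correspondence, attach to $P$ its $\Q_2$-Lie algebra $L(P)$ and decompose it into a semisimple part $S$ and a soluble radical $R$.

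The final step is classification and ruling out. The Kneser classification of semi-simple $p$-adic algebraic groups delivers a finite list of possibilities for the group associated to $S$ in the admissible dimension range, and the real-class bound similarly restricts the radical $R$. For each of the finitely many admissible pairs $(S,R)$, the stabilized number of real classes of the principal congruence quotients of $P$ can be computed or bounded below, and one verifies in every case that it is at least $25$, which is the desired contradiction. The main obstacle is precisely this verification: the lower bounds must be sharp enough to rule out every $r\in\{1,3,5,\dots,23\}$ while remaining consistent with the sharp value $r=25$ attained by Theorem~B, so each candidate $(S,R)$ must be treated carefully and the arithmetic of its congruence quotients exploited in full detail; a secondary difficulty is the quantitative coclass-to-real-class bound required in the second step.
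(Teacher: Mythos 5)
There are two genuine gaps, each of which is fatal on its own. First, your limit construction loses the parity of $r$, and parity is the only thing that makes the statement true. Passing to a convergent subsequence in the space of marked $d$-generated pro-$2$ groups only guarantees that every finite continuous quotient of the limit $P$ has \emph{at most} $r$ real classes (quotients of the $G_n$ have at most $r$ real classes, but a proper quotient of a group with exactly $r$ real classes can perfectly well have an even number of them). A group $P$ with $|\Irr_r(P/N)|\le r$ for all $N\nor_o P$ carries no contradiction: $\Z_2$ is infinite and has exactly $2$ real irreducible characters. The paper avoids this by a more careful construction: using the fact that $\Irr_r(G)\subseteq\Irr(G/G_k)$ for an $r$-bounded $k$, one can always pass to a quotient $G/Z$ by a central subgroup of order $2$ inside $G_k$ \emph{without losing any real character}, so that $\Irr_r(G/Z)=\Irr_r(G)$ still has exactly $r$ elements; a K\H{o}nig-type argument on the resulting ``father--son'' graph then produces an inverse limit $P$ with $|\Irr_r(P)|$ exactly $r$, hence odd. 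You need this exactness (or at least the oddness), since every subsequent step of the paper's argument — non-solubility, the descent to centralizers of involutions, the final contradiction with $25$ — is a parity argument.

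Second, the quantitative lemma you propose in order to get analyticity, namely $r(G)\ge f(\mathrm{cc}(G))$ with $f$ unbounded for $2$-groups with a bounded number of generators, is false: the groups of Theorem B are finite quotients of a Sylow pro-$2$ subgroup of $\PGL_1(D)$ with $\dim_{\Q_2}D=9$; they are $3$-generated, have exactly $25$ real classes, and have unbounded coclass (their order grows roughly like $2^{8i/3}$ while their class grows like $i$). Restricting the claim to $r\le 23$ would make it essentially equivalent to the theorem you are trying to prove. Worse, even if bounded coclass held, the coclass theorems for $p=2$ would force $P$ to be soluble with an open abelian subgroup, so the semisimple part $S$ of $\LL(P)$ would vanish and the Kneser step would be vacuous — your steps 2 and 3 are mutually inconsistent. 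The paper instead bounds the \emph{rank} of $P$ directly, by noting that involutions are real so every quotient of $P$ has an $r$-bounded number of classes of involutions, which bounds the rank and gives analyticity. Finally, your last step hides the technical heart of the paper: since $P$ is only an open subgroup of $\Aut(\LL)$, one needs the descent theorem (if $P$ has index $2$ in $Q$, some centralizer $C_P(x)$ with $x\in Q\setminus P$ an involution again has an odd number of real characters, of smaller dimension) to reduce, by minimality of $\dim\LL(P)$, to the case where $P$ is a full Sylow pro-$2$ subgroup of $\Aut(\LL)$; only then do torsion-freeness, anisotropy and Kneser's classification yield $\LL\cong\sli_1(D)$, and an explicit count in $\PGL_1(D)$ gives the bound $\ge 25$. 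Without that descent, ``one verifies in every case'' is not available.
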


\begin{TheoremB} \label{k25}There are  infinitely many finite 2-groups with exactly 25 real conjugacy classes.
\end{TheoremB}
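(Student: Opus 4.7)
The plan is to exhibit an infinite sequence of finite $2$-group quotients of a pro-$2$ group $P$ arising as an open subgroup of $\mathbf{G}(\Q_2)$ for a carefully chosen semisimple algebraic group $\mathbf{G}$ over $\Q_2$, and to show that each of these quotients has exactly $25$ real conjugacy classes. The Kneser classification of semisimple $p$-adic algebraic groups enters at the outset: it is used to single out those $\mathbf{G}$ whose $\Q_2$-points contain a compact open pro-$2$ subgroup (for instance, a deep enough principal congruence subgroup of an integral model) with suitable orbit-theoretic behaviour.

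First I would fix $\mathbf{G}$ together with a filtration $P = P_1 \supseteq P_2 \supseteq \cdots$ by open normal subgroups, chosen so that, via a formal logarithm and a Lazard-type correspondence, the conjugation action within $P/P_n$ is identified with the adjoint action of the associated $\Z_2$-Lie algebra $\mathfrak{g}$ on its quotient $\mathfrak{g}/2^{n-1}\mathfrak{g}$, with group inversion corresponding to the involution $x \mapsto -x$. Under this dictionary, the real conjugacy classes of $P/P_n$ are in bijection with the adjoint orbits of $\mathfrak{g}/2^{n-1}\mathfrak{g}$ fixed by negation. The original problem is thereby reduced to a purely Lie-algebraic orbit count on a family of finite nilpotent quotients.

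The heart of the argument is then to show that, for all sufficiently large $n$, this count equals exactly $25$. The expectation is that generic orbits appear in pairs $\{O,-O\}$ with $O\neq -O$, while only a bounded set of orbits satisfies $-O=O$; the crucial property one must verify is that this self-dual set has the same cardinality at every level of the filtration, so that the growth of the total number of conjugacy classes is absorbed entirely by the pairs. The main obstacle is therefore to produce a $\mathbf{G}$ for which the Weyl-group action and its interaction with $-\mathrm{id}$ on $\mathfrak{g}$ yield exactly $25$ persistent self-dual orbits, and no others, at every sufficiently deep level. The precise value $25$ must be achievable, but no smaller odd value can be, since Theorem A rules out infinite families for $r\le 23$; it is exactly this numerical coincidence, made available by Kneser's list of $2$-adic forms, that makes the construction work and identifies $25$ as the minimal odd value for which Sangroniz's conjecture fails.
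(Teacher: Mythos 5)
Your proposal is a plan rather than a proof: the two steps that carry the actual content of Theorem B are left as ``expectations'' and ``obstacles'' rather than being carried out. First, you never identify the group $\mathbf{G}$. The paper pins it down precisely: the relevant object is the Sylow pro-$2$ subgroup $U_1=(1+\m_D)Z(D^*)/Z(D^*)$ of $\PGL_1(D)$, where $D$ is a division $\Q_2$-algebra of dimension $9$ over its center (the anisotropic inner form of $\PGL_3$); Kneser's classification together with the counting estimate of Corollary 8.4 is what forces this choice, but some specific choice must be made and then analyzed, and without it there is nothing to compute. Your closing sentence --- that $25$ ``must be achievable'' because Theorem A rules out $r\le 23$ --- is circular: Theorem A only excludes smaller odd values; it does not produce an infinite family for any value, which is exactly what Theorem B asserts.

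Second, the proposed dictionary ``real classes of $P/P_n$ correspond to negation-fixed adjoint orbits of $\mathfrak{g}/2^{n-1}\mathfrak{g}$'' is not justified as stated and is not how the count of $25$ is obtained. The logarithm identification turns inversion into negation only on abelian subquotients $N_i/N_j$ with $j\le 2i+1$ (Lemma 4.3 of the paper), not on the full finite quotient, and even granting some such dictionary one still has to determine which orbits are self-dual, which is where all the work lies. The paper's mechanism is different and completely explicit: (i) it shows that for large $k$ all real elements of $U_1/U_k$ are involutions, by proving there are no real elements of order $4$ --- a computation that reduces to the non-solvability of $\bar b^{2^{2i}}-\bar b=\bar 1$ in $\F_8$; (ii) the involutions lie in an elementary abelian subquotient $U_i/U_{i+3}$, and the $U_1$-conjugacy classes there are counted via centralizer computations in the congruence filtration, giving $14+7+4=25$. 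There is no Weyl group in sight --- the group is anisotropic, which is precisely why its Sylow pro-$2$ subgroups are torsion free --- and the stability of the count across levels is a consequence of these explicit computations rather than of a general pairing principle. To turn your sketch into a proof you would need to supply both the specific group and these two computations.
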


In contrast to the previouos results for $2$-groups with an odd number $r\leq 7$ of
real conjugacy classes, which are obtained by methods of finite group theory and its representation theory, 
both Theorem A and Theorem B are proven using pro-$p$ techniques, as we next briefly sketch.

It turns out that the existence of infinitely many finite 2-groups with exactly $k$ real classes is equivalent to the existence of an infinite pro-2 group such that almost all its finite quotients have exactly $k$ real classes. Consequently, the problem is reduced to the study of such pro-2 groups. 
 As we shall
prove,  these groups have finite rank, and so they are 2-adic analytic. 
These key steps
in our  reduction of the problem require to use a basic property of the real classes
in a finite group, which is that its number coincides with the number of real-valued irreducible characters of the group.
 In fact, one advantadge of passing from conjugacy classes to characters
is that the latter behave better with respect to quotient groups, 
and this is exploited in the construction of a pro-$2$
group with $k$ real classes. 
On the other hand, we shall need to look at the real conjugacy classes
again in order to control the rank of this pro-$2$ group. 

 When the number of real conjugacy classes $k$ is odd, we shall show that our analysis should be focused on 
a Sylow pro-2 group of the automorphism group of a semi-simple Lie $\Q_2$-algebra. 
These groups are well understood, because of the classification of semi-simple $p$-adic algebraic groups by  M. Kneser  \cite{Kn}, 
which therefore plays an essential role in the proof of Theorem A. 
It is certainly remarkable that the proof of our result on real classes of finite $2$-groups
needs an appeal to this deep result from the theory of algebraic groups. 
It is ultimately M. Kneser's classification which indicates us where to look for a minimal counterexample to the conjecture, and
as a matter of fact the groups in Theorem B are finite quotients of a Sylow pro-2 group of $\PGL_1(D)$, where $D$ is a division $\Q_2$-algebra of dimension 9 (we recall that up to isomorphism there are 
only two such algebras, both of them having isomorphic multiplicative groups).

 We note that the fact that the equality between the number of real classes and the number of
real characters in a finite group in general has no analogue in the case of rational values,
seems to be an obstruction to apply our arguments in a rather direct way to 
related questions on rational classes and characters in finite $2$-groups.
On the contrary, by this same reason our methods may be valid for similar problems concerning
fields of values in $2$-groups in which equality holds, as it is the case
of  the field $\Q(i)$, being
the fixed field of a cyclic Galois group acting on the classes of a finite $2$-group.

{\bf Acknowledgements.} 
We would like to thank Josu Sangroniz   for useful comments on
preliminary versions of this paper.

\section{Finite group preliminaries}
In this section we include  some preliminary results about real classes of finite groups which we shall need later on. 

Let a group $H$ act on a group $G$  as automorphisms. We will usually consider the action of a subgroup $H$ of $G$ on $G$ by conjugation, and the induced action on quotients of $G$. By a slight abuse of language, we may call the orbits of the action $H$-conjugacy classes.  For a subset $T$ of $G$, we denote by $k_H(T)$ the number of $H$-orbits in $G$ that have non-trivial intersection with $T$.   If $G$ acts on itself by conjugation, then  for simplicity  $k(G)=k_G(G)$  will denote the number of conjugacy classes of $G$.  An element $g\in G$ is called $H$-{\bf real}, or simply  {\bf real}   when there is no possible confusion, if $g$ and $g^{-1}$ are in the same $H$-orbit. A conjugacy class is called { $H$-real} if it consists of $ H$-real elements. We denote by $r_H(T)$ the number of $H$-real conjugacy classes of $G$ that have non-trivial intersection with $T$. For simplicity we   write $r(G)=r_G(G)$.

In the Introduction we already noted that the parities of $k(G)$ and $r(G)$ coincide when $G$ is a finite group. Next we prove this easy fact.  

\begin{Lemma} For a finite group $G$, $k(G)\equiv r(G) \pmod 2$.
\end{Lemma}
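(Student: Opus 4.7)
The plan is to exploit the obvious involution on the set of conjugacy classes of $G$ induced by inversion. Concretely, define $\sigma : \mathrm{Cl}(G) \to \mathrm{Cl}(G)$ by $\sigma(C) = C^{-1} := \{g^{-1} : g \in C\}$. First I would verify that this is well defined: if $h = xgx^{-1}$ then $h^{-1} = xg^{-1}x^{-1}$, so $C^{-1}$ is again a single conjugacy class. Clearly $\sigma^2 = \mathrm{id}$, so $\sigma$ is an involution on the finite set $\mathrm{Cl}(G)$.

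Next I would identify the fixed points of $\sigma$ with the real classes. By definition, $\sigma(C) = C$ means $C = C^{-1}$, which is exactly the condition that $C$ consists of real elements (equivalently, that some, hence every, element of $C$ is conjugate to its inverse). Hence the number of fixed points of $\sigma$ is $r(G)$.

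Since $\sigma$ has order dividing $2$, every orbit has size $1$ or $2$, and the number of size-$2$ orbits equals $(k(G) - r(G))/2$. In particular $k(G) - r(G)$ is even, giving $k(G) \equiv r(G) \pmod 2$. There is essentially no obstacle here; the whole statement reduces to this orbit-counting remark, and I would write the argument in one short paragraph in the paper.
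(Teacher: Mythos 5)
Your proposal is correct and follows exactly the same approach as the paper: inversion induces an involution on the set of conjugacy classes, its fixed points are the real classes, and the orbit-counting observation gives the congruence. The paper states this in two sentences; your write-up just spells out the well-definedness check and the orbit count explicitly.
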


\begin{proof}
Observe that inversion of elements in $G$ induces a permutation of order at most $2$ on the set of conjugacy classes of $G$. 
Since a conjugacy class of $G$ is real if and only if it is fixed by this permutation, the result is clear.
\end{proof}

The following results relate the parities of the number of conjugacy classes of a finite 2-group and  a maximal subgroup of it.

\begin{Lemma} \label{congr2}Let $H$ be a subgroup of a finite group $G$ of index 2. Then  $  k(H)\equiv  k_G(G\setminus H) \pmod 2  $.
\end{Lemma}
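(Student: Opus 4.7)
The plan is to analyse the action of the order-$2$ quotient $G/H$ on the set of $H$-conjugacy classes (recall $H\unlhd G$ since it has index $2$). Every $H$-class is either $G$-invariant, in which case it is itself a single $G$-class, or is matched with exactly one other $H$-class to form a $G$-class equal to the disjoint union of the two. Writing $a$ for the number of $G$-invariant $H$-classes and $b$ for the number of orbits of size $2$, I obtain
\[ k(H)=a+2b\quad\text{and}\quad k_G(H)=a+b. \]
In particular $k(H)\equiv a\pmod 2$, so it suffices to prove $k_G(G\setminus H)\equiv a\pmod 2$; in fact I will establish the sharper equality $k_G(G\setminus H)=a$.

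My preferred route uses Clifford theory together with Brauer's permutation lemma. By Brauer, the $G/H$-action on $\Irr(H)$ has the same cycle structure as its action on conjugacy classes of $H$, so it has $a$ fixed irreducible characters and $b$ orbits of size $2$. Since $[G:H]=2$, each $G$-invariant $\psi\in\Irr(H)$ extends to $G$ in exactly two distinct ways, while each pair $\{\psi,\psi^g\}$ of non-invariant characters yields a single irreducible $\psi^G$ of $G$. Summing gives $k(G)=2a+b$, and therefore
\[ k_G(G\setminus H)=k(G)-k_G(H)=(2a+b)-(a+b)=a, \]
which is more than enough.

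A representation-theory-free alternative is to apply Burnside's orbit-counting lemma to the conjugation action of $G$ on $G\setminus H$. The key input is a double count of the pairs $(g,h)\in(G\setminus H)\times H$ with $gh=hg$: using the dichotomy $[C_G(h):C_H(h)]\in\{1,2\}$, together with the observation that the index is $2$ precisely when the $H$-class of $h$ is $G$-invariant, one obtains $\sum_{g\in G\setminus H}|C_H(g)|=a|H|$. The same value arises when both coordinates range over $G\setminus H$ since $|C_G(x)\setminus H|=|C_H(x)|$ for $x\in G\setminus H$, and Burnside's formula then yields $k_G(G\setminus H)=a$ directly. The only genuine obstacle in either approach is setting up the fusion-or-extension dichotomy cleanly; once that is in place, both proofs conclude mechanically.
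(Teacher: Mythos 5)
Your proposal is correct, and both of your routes in fact establish the sharper identity $k_G(G\setminus H)=a$, where $a$ is the number of $G$-invariant conjugacy classes of $H$; the congruence then follows from $k(H)=a+2b\equiv a\pmod 2$. For comparison, the paper's own proof is a single Burnside double count: it evaluates $\left|\{(g,h)\in G\times H:\ gh=hg\}\right|$ in two ways to obtain $2k_G(H)=k_H(G)=k(H)+k_G(G\setminus H)$, using that every $G$-class in $G\setminus H$ is already a single $H$-class (since $C_G(x)\not\le H$ for $x\notin H$). Your second, representation-free argument is essentially the complementary double count — you count commuting pairs with one coordinate ranging over $G\setminus H$ instead of over $H$ — so it is a close variant of the paper's proof, with the dichotomy $[C_G(h):C_H(h)]\in\{1,2\}$ (and its equivalence with $G$-invariance of the $H$-class of $h$) made explicit where the paper leaves it implicit. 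Your first argument, via Clifford theory and Brauer's permutation lemma, is genuinely different: it computes $k(G)=2a+b$ from the extension/induction dichotomy for $\Irr(H)$ and subtracts $k_G(H)=a+b$. That is heavier machinery than this lemma needs, but it is precisely the toolkit the paper deploys shortly afterwards in Proposition \ref{bound}, where the analogous character-theoretic bookkeeping yields the inequality $r_G(G\setminus H)\le|\Irr_{r,G}(H)|$; so your route is a legitimate alternative that foreshadows the harder result, at the cost of invoking Gallagher, Clifford correspondence and Brauer's lemma for a statement the paper dispatches with one counting identity.
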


\begin{proof} Recall that if $G$ acts on a non-empty finite set $\Omega$, then by Burnside's formula  (\cite[Theorem 3.22]{Ro}), the number of orbits
of the action equals to 
$$\frac{1}{|G|}\sum_{g\in G}|\mbox{Fix}_{\Omega}(g)|\, ,$$ where $\mbox{Fix}_{\Omega}(g)$ is the set of fixed points of $g\in G$
in $\Omega$.  Since both $H$ and $G\setminus H$ are normal subsets of $G$ we have that
$$\begin{array}{lll} 2k_G(H) & = & \frac 2{|G|} \left | \{(g,h)\in G\times H: gh=hg\} \right |\\ &&\\ &=&k_H(G)=k_H(G\setminus H)+k(H)=k_G(G\setminus H)+k(H)\, ,\end{array}$$where
the last equality follows from the fact that each $G$-conjugacy class in $G\setminus H$ is already an orbit under conjugation by $H$, 
because $H$ has index $2$ in $G$. 
\end{proof}

\begin{Corollary} \label{subgroupfinite}Let $G$ be a finite group and $H$ a subgroup of index 2. If there are no real elements in $G\setminus H$, then $k(H)$ is even.
\end{Corollary}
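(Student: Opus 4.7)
The plan is to combine Lemma \ref{congr2} with a simple parity argument on the $G$-conjugacy classes contained in $G\setminus H$. Lemma \ref{congr2} tells us that $k(H)\equiv k_G(G\setminus H)\pmod 2$, so it suffices to show that under the hypothesis the number of $G$-conjugacy classes meeting $G\setminus H$ is even.

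To see this, I would observe that since $H$ is normal in $G$ (being of index $2$), the set $G\setminus H$ is a union of $G$-conjugacy classes, and inversion $g\mapsto g^{-1}$ permutes this set and hence permutes the $G$-conjugacy classes contained in $G\setminus H$. The fixed points of this involution are precisely the real $G$-classes in $G\setminus H$. By hypothesis no element of $G\setminus H$ is real, so no class in $G\setminus H$ is fixed by inversion. Therefore the involution pairs up the $G$-classes in $G\setminus H$ into pairs $\{C,C^{-1}\}$ with $C\neq C^{-1}$, and consequently $k_G(G\setminus H)$ is even.

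Combining this with Lemma \ref{congr2} yields $k(H)\equiv 0\pmod 2$, as claimed. There is no real obstacle here; the only point to be careful with is that inversion genuinely preserves $G\setminus H$, which follows from the fact that $|G/H|=2$ forces $g^{-1}\in gH$ and hence $g^{-1}\in G\setminus H$ whenever $g\in G\setminus H$.
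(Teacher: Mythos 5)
Your argument is correct and is exactly the intended derivation: the paper states this corollary as an immediate consequence of Lemma \ref{congr2}, with the unstated step being precisely your observation that inversion pairs up the non-real $G$-classes in $G\setminus H$, forcing $k_G(G\setminus H)$ to be even. Nothing to add.
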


If $G$ is a finite group,  we denote   by $\Irr (G)$  the set of irreducible complex characters of $G$. If $\bar G$ is a quotient group of $G$ then $\Irr (\bar G)$ is  identified  in a natural way  with  a subset of 
$\Irr (G)$. We denote by $\Irr _r(G)$ the set of irreducible characters of $G$ taking only real values. Such characters are called {\bf real}.

\begin{Lemma}\label{Brauer} For a finite group $G$, $r(G)=|\Irr _r(G)|$.
\end{Lemma}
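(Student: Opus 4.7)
The plan is to apply the classical Brauer permutation lemma (sometimes attributed to Burnside/Frobenius) to the action of complex conjugation on rows and columns of the character table. I would set up the character table $X = (\chi(g_C))_{\chi \in \Irr(G),\, C}$ as a square matrix, with rows indexed by irreducible characters and columns indexed by conjugacy classes. Then I would use the identity $\chi(g^{-1}) = \overline{\chi(g)}$ to translate the question into a statement about two commuting permutation actions.

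More concretely, let $\sigma$ denote the permutation of $\Irr(G)$ given by $\chi \mapsto \bar\chi$, and let $\tau$ denote the permutation of the set of conjugacy classes given by $C \mapsto C^{-1}$. By definition, $|\Irr_r(G)|$ is the number of fixed points of $\sigma$ and $r(G)$ is the number of fixed points of $\tau$. The first step is to observe that
\[
\chi(g_{C^{-1}}) = \chi(g_C^{-1}) = \overline{\chi(g_C)},
\]
so if $P_\sigma$ and $P_\tau$ denote the associated permutation matrices (acting on row and column indices respectively), then $P_\sigma X = \bar X = X P_\tau$.

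Since $X$ is invertible (the irreducible characters form a basis of the space of class functions, and the character table is invertible by column orthogonality), we obtain $P_\sigma = X P_\tau X^{-1}$. Therefore $P_\sigma$ and $P_\tau$ are conjugate complex matrices, and in particular have the same trace. Finally, since the trace of a permutation matrix equals the number of fixed points of the underlying permutation, we conclude
\[
|\Irr_r(G)| = \tr(P_\sigma) = \tr(P_\tau) = r(G),
\]
as desired. No step here is truly an obstacle; the only subtlety worth pointing out in the write-up is the invertibility of $X$, which rests on the standard orthogonality relations for irreducible characters.
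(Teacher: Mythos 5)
Your proof is correct and is essentially the same as the paper's: the paper simply cites Brauer's permutation lemma (Theorem 6.32 and Problem 6.13 of Isaacs), and your trace-of-conjugate-permutation-matrices argument is precisely the standard proof of that lemma specialized to the complex-conjugation/inversion action. The only difference is that you make the cited result self-contained.
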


\begin{proof}
This is an immediate consequence of Brauer's Theorem 6.32 of \cite{I} (see Problem 6.13 of \cite{I}). 
\end{proof}

As a consequence of   the previous  lemma and the fact that characters of a quotient group
can be identified with characters of the whole group, we obtain the following:

\begin{Corollary} \label{classquot}Let $G$ be a finite group having $r$ real conjugacy classes. Then any quotient group of $G$ has at most $r$ real conjugacy classes.
\end{Corollary}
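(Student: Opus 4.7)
The plan is to reduce everything to counting real-valued irreducible characters via Lemma \ref{Brauer}, and then exploit the fact (recalled in the paragraph preceding the corollary) that $\Irr(\bar G)$ embeds naturally into $\Irr(G)$ for any quotient $\bar G = G/N$.

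First I would let $N \nor G$ be an arbitrary normal subgroup and set $\bar G = G/N$. By Lemma \ref{Brauer} applied to both $G$ and $\bar G$, it suffices to show that $|\Irr_r(\bar G)| \leq |\Irr_r(G)|$. To do this I would use the standard inflation map: each $\bar\chi \in \Irr(\bar G)$ lifts to a unique $\chi \in \Irr(G)$ with $N \subseteq \ker\chi$, and this lift satisfies $\chi(g) = \bar\chi(gN)$ for every $g \in G$, so the two characters take exactly the same set of values.

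From this value-preserving property it follows immediately that $\bar\chi$ is real-valued if and only if its lift $\chi$ is real-valued. Hence inflation restricts to an injection $\Irr_r(\bar G) \hookrightarrow \Irr_r(G)$, giving $|\Irr_r(\bar G)| \leq |\Irr_r(G)|$. Applying Lemma \ref{Brauer} once more yields $r(\bar G) \leq r(G) = r$, which is the claim.

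There isn't really a hard step here; the corollary is essentially a bookkeeping consequence of Lemma \ref{Brauer} combined with the standard inflation identification. The only mild point worth stating explicitly is that realness of a character is visibly preserved by inflation, which is why the argument does \emph{not} go through directly for conjugacy classes themselves (a quotient map can collapse distinct classes and is not a priori compatible with inversion-counting), and it is precisely this asymmetry between classes and characters that makes the character-theoretic reformulation via Lemma \ref{Brauer} useful.
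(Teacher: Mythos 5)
Your proof is correct and is exactly the paper's argument: the paper derives this corollary from Lemma \ref{Brauer} together with the identification of $\Irr(\bar G)$ with a subset of $\Irr(G)$, which is precisely the inflation argument you spell out. No issues.
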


In the next proposition we compare   the number  of real classes in $H$ and $G\setminus H$ when $H$ has index 2 in a finite group $G$.

\begin{Proposition}\label{bound}
Let $G$ be a finite group and $H$ a subgroup of $G$ of index $2$. Denote by
$\Irr_{r, G}(H)$ the set of real irreducible characters of $H$ which are $G$-invariant.  
Then 
$$
r_G(G\setminus H)\leq |\Irr_{r, G}(H)|\, .
$$In particular,  $r_G(G\setminus H)\leq r(H)$. 
\end{Proposition}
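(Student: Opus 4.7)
My plan is to compute the trace of the inversion operator $\iota^*$ on a carefully chosen space of class functions in two different ways. Let $\lambda$ denote the nontrivial linear character of $G/H$ inflated to $G$, so that $\lambda\equiv 1$ on $H$ and $\lambda\equiv -1$ on $G\setminus H$, and let $V$ denote the subspace of class functions of $G$ that vanish on $H$. The involution $(\iota^*f)(g)=f(g^{-1})$ preserves $V$.

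First, I build a basis of $V$ from Clifford theory. For each $G$-invariant $\chi\in\Irr(H)$, the two extensions of $\chi$ to $G$ are $\hat\chi$ and $\hat\chi\lambda$, while every other element of $\Irr(G)$ is of the form $\mu^G$ for some non-$G$-invariant $\mu\in\Irr(H)$, and such induced characters vanish on $G\setminus H$. Setting $\tilde\chi:=\hat\chi-\hat\chi\lambda$ yields a virtual character in $V$, and expanding an arbitrary $f\in V$ along $\Irr(G)$ the condition $f|_H=0$ both forces every coefficient of an induced character to vanish and pairs the coefficients of $\hat\chi$ and $\hat\chi\lambda$ with opposite signs. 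Hence $\{\tilde\chi\}$, indexed by the $G$-invariant elements of $\Irr(H)$, is a basis of $V$.

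Next, I compute $\tr(\iota^*\mid_V)$ in two ways. In the basis of indicator functions $\delta_C$ of the $G$-classes $C\subseteq G\setminus H$, $\iota^*$ sends $\delta_C$ to $\delta_{C^{-1}}$, so its trace equals the number of inversion-fixed basis vectors, namely $r_G(G\setminus H)$. In the basis $\{\tilde\chi\}$, using $\bar\lambda=\lambda$, I obtain $\iota^*\tilde\chi=\bar{\hat\chi}-\bar{\hat\chi}\lambda$, which I analyse in cases. If $\chi$ is non-real, then $\bar{\hat\chi}$ restricts to $\bar\chi\ne\chi$, so $\iota^*\tilde\chi$ is (up to sign) a different basis vector $\tilde{\bar\chi}$ and contributes $0$ to the trace. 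If $\chi$ is real, then $\bar{\hat\chi}\in\{\hat\chi,\hat\chi\lambda\}$ and $\iota^*\tilde\chi=\pm\tilde\chi$; moreover $\hat\chi$ is real if and only if $\hat\chi\lambda$ is real (apply conjugation and multiply by $\lambda$), so the real $G$-invariant characters of $H$ fall into two groups: $b$ of them whose extensions are real (contributing eigenvalue $+1$) and $c$ of them whose extensions are non-real (contributing eigenvalue $-1$). The trace therefore equals $b-c$.

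Equating the two computations gives $r_G(G\setminus H)=b-c\le b+c=|\Irr_{r,G}(H)|$, which is the first asserted inequality, and the bound $|\Irr_{r,G}(H)|\le r(H)$ then follows at once from Lemma \ref{Brauer}. The key conceptual step is the trichotomy in the second trace computation, and in particular the observation that the two extensions of a real $G$-invariant character are either both real or both non-real; once this is in hand, the rest of the argument is routine linear algebra.
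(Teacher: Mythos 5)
Your proof is correct. The basis claim for $V$ (class functions of $G$ vanishing on $H$) is right: writing $f=\sum_{\chi\in\Irr(G)}c_\chi\chi$ and restricting to $H$, linear independence of $\Irr(H)$ kills the coefficients of the induced characters and forces $c_{\hat\chi}+c_{\hat\chi\lambda}=0$, so the $\tilde\chi$ span, and they are independent because the extensions are distinct irreducibles. The two trace computations are also sound, including the key observation that the two extensions of a real $G$-invariant $\chi$ are either both real or both non-real, so that $\iota^*$ has a well-defined diagonal entry $+1$, $-1$ or $0$ at each $\tilde\chi$.

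Your route is genuinely different from the paper's. The paper splits $r(G)=r_G(H)+r_G(G\setminus H)$ and derives two separate identities: a Clifford-theoretic upper bound for $|\Irr_r(G)|$ in terms of $|\Irr_{r,G}(H)|$, $|\Irr_r(H)|$ and the number of $\varphi\in\Irr(H)$ with $\varphi^a=\overline{\varphi}$, and an exact count of $r_G(H)$ in terms of $r(H)$ and the number of classes $K$ of $H$ with $K^a=K^{-1}$; it then equates the two ``twisted'' counts by applying Brauer's permutation lemma to the involution $\varphi\mapsto\overline{\varphi^a}$, $K\mapsto(K^a)^{-1}$, and subtracts. You instead isolate the single subspace $V$ of class functions vanishing on $H$ and compute $\tr(\iota^*|_V)$ in the class-indicator basis and in the $\tilde\chi$ basis. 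This is in effect a self-contained, hands-on instance of the same trace principle underlying Brauer's lemma, but it avoids the twisted involution and the bookkeeping entirely, and it yields the sharper exact identity $r_G(G\setminus H)=b-c$, from which the stated inequality is immediate. What the paper's version buys in exchange is the intermediate formula $(2.2)$ for $r_G(H)$, which is of the type reused elsewhere in its arguments; your version buys brevity and a cleaner conceptual picture of which characters of $H$ actually contribute.
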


\begin{proof}
Let $\varphi\in\Irr(H)$ be such that there exists $\chi\in\Irr_r(G)$ lying over 
$\varphi$, so $\overline{\varphi}\in\Irr(H)$ is also
a constituent of $\chi_H$.  
Since $G/H$ is cyclic, $\varphi$ is $G$-invariant if and only if 
$\chi_H=\varphi$ by Corollary 11.22 of \cite{I}, so $\varphi$ is real-valued in this case. 
Also,
Gallagher's theorem \cite[Corollary 6.17]{I} implies that if $\varphi$ is $G$-invariant then $\varphi$ has $2$ irreducible
extensions to $G$, both of them being real-valued because $\chi$ is real.  
On the other hand, if $\varphi$ is not $G$-invariant then 
$\chi_H=\varphi + \varphi^a$, where
$a\in G\setminus H$, and $\chi$ is the only irreducible character of 
$G$ lying over $\varphi$, by Clifford's correspondence \cite[Theorem 6.11]{I}. For any $a\in G\setminus H$, this yields to

\begin{multline*}
|\Irr_r(G)|\leq
2 |\Irr_{r,G}(H)| + \frac{|\Irr_r(H)\setminus\Irr_{r,G}(H)|}{2} + \frac{|\{\varphi\in\Irr(H) :\ \varphi^a=\overline{\varphi}\neq\varphi \}|}{2}\\
 = \frac{3|\Irr_{r,G}(H)|}{2}+\frac{|\Irr_r(H)|}{2}+ \frac{|\{\varphi\in\Irr(H) :\ \varphi^a=\overline{\varphi}\neq\varphi \}|}{2}\, .
\end{multline*}
{Since $|\{\varphi\in\Irr(H) :\ \varphi^a=\overline{\varphi}  \}|=|\Irr_{r,G}(H)| +|\{\varphi\in\Irr(H) :\ \varphi^a=\overline{\varphi}\neq\varphi \}|$, we obtain that
\begin{equation}\label{eqchar}
|\Irr_r(G)|\leq
 |\Irr_{r,G}(H)| + \frac{|\Irr_r(H)|}{2} + \frac{|\{\varphi\in\Irr(H) :\ \varphi^a=\overline{\varphi}  \}|}2\, .
\end{equation}}

Now, denote by $r^G(H)$ the number of real conjugacy classes of $H$ which are 
stabilized by $G$ under conjugation. If $K=Cl_H(x)$ for $x\in H$, we write 
$K^{-1}=Cl_H(x^{-1})$ and $K^a=Cl_H(x^a)$, where again $a\in G\setminus H$. For any $a\in G\setminus H$, it is clear that

\begin{multline*}
r_G(H)=r^G(H) + \frac{1}{2}(r(H) - r^G(H)) + \frac{1}{2}|\{ K\in Cl(H) : \ K^a=K^{-1}\neq K\}|\\
= \frac{1}{2}r(H)  + \frac 12 r^G(H) +\frac{1}{2}|\{ K\in Cl(H) : \ K^a=K^{-1}\neq K\}|\, . 
\end{multline*}
{Note that $$|\{ K\in Cl(H) : \ K^a=K^{-1} \}|=r^G(H)+|\{ K\in Cl(H) : \ K^a=K^{-1}\neq K\}|,$$ and so
\begin{equation}\label{eqclas}
r_G(H)=  \frac{r(H)}{2} + \frac{|\{ K\in Cl(H) : \ K^a=K^{-1} \}|}{2}\, . 
\end{equation}}

Let an involution $\sigma$ act on the sets $\Irr(H)$ and $Cl(H)$  
via
$\varphi^\sigma=\overline{\varphi^a}$ and $K^\sigma=(K^a)^{-1}$, respectively, 
for all $\varphi\in\Irr(H)$ and $K\in Cl(H)$. 
By Brauer's Theorem 6.32 of \cite{I}, $\sigma$
fixes the same number of points in $\Irr(H)$ that in $Cl(H)$, that is
$$|\{\varphi\in\Irr(H) :\ \varphi^a=\overline{\varphi}  \}|=|\{ K\in Cl(H) : \ K^a=K^{-1} \}|.$$
Since by Lemma \ref{Brauer}, $|\Irr_r(H)|=r(H)$ and $|\Irr_r(G)|=r(G)=r_G(H) + r_G(G\setminus H)$, it follows from (\ref{eqchar}) and (\ref{eqclas}) that
$$
r_G(G\setminus H) \leq |\Irr_{r,G}(H)|\leq |\Irr_r(H)|=r(H)\, .
$$
\end{proof}

For a finite 2-group $G$, we  consider  the following series of subgroups:
$$G_1=G,\ \ \ G_{k+1}=[G_k,G]G_k^2 \ (k\ge 1).$$
Note that the series eventually reaches the trivial subgroup $\{1_G\}$, because
$G$ is nilpotent. 
For simplicity instead of ``bounded by a function that only depends on $r$" we shall write simply ``$r$-bounded". Recall that the {\bf rank} of a finite group is the supremum of the  number of  generators of its subgroups. (Of course, it is understood that the number of generators of a group $G$ is the size of a minimal generating set of $G$.) 

\begin{Lemma} \label{fgroupsrclases} Let $G$ be a finite 2-group with $r$ real conjugacy classes. Then the following holds:
\begin{enumerate}
\item The number of conjugacy classes of elements of order $2$ in every quotient   group of  $G$ is $r$-bounded.
\item The rank   of $G$ is $r$-bounded.
\item   There exists a $r$-bounded number $k$ such that $\Irr  _r(G)\subseteq \Irr (G/G_k)$.
\end{enumerate}
\end{Lemma}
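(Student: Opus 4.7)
Part (1) is a direct application of Corollary~\ref{classquot}: any $g\in G$ of order $2$ is self-inverse, so its conjugacy class is real. For any quotient $\bar G$ of $G$ we have $r(\bar G)\le r(G)=r$, hence the number of $\bar G$-classes of involutions is at most $r$.

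For part (2), I first use (1) to obtain the easy bound $d(\bar G)\le \log_2(r+1)$ for every quotient $\bar G$ of $G$: the Frattini quotient $\bar G/\Phi(\bar G)$ is elementary abelian and itself a quotient of $G$, and its $|\bar G/\Phi(\bar G)|-1$ non-identity elements are pairwise non-conjugate involutions, whose number is at most $r$ by (1). The rank of $G$, however, is $\max_H d(H)$ over all subgroups, not only over quotients. To upgrade the preceding bound, I would combine two ingredients. First, the combinatorial fact that any $2$-subgroup of $\GL_d(\F_2)$ has at least $d$ orbits on $\F_2^d\setminus\{0\}$ (provable by induction on $d$: a $2$-group must fix a nonzero vector, and one then quotients by it); applied to any normal elementary abelian subgroup $A$ of a quotient of $G$, this together with (1) forces $d(A)\le r$. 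Second, a structural bound on $2$-groups converting this control of normal elementary abelian subgroups of quotients of $G$ into a bound on the rank of $G$ itself—for instance via the iterated inequality $\text{rank}(G)\le d(G)+\text{rank}(\Phi(G))$ applied to the Frattini series, or via the Thompson/large-abelian-normal-subgroup analysis in which $G/C_G(A)$ embeds into $\mathrm{Aut}(A)$ for a characteristically chosen $A$. I expect the hard part to be precisely this second ingredient, which requires nontrivial $p$-group theory.

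For part (3), I use (2): since rank$(G)$ is $r$-bounded, each factor $G_k/G_{k+1}$ of the exponent-$2$ central series is elementary abelian of $r$-bounded rank and central in $G/G_{k+1}$. The sequence $\Irr_r(G/G_k)$ is non-decreasing and bounded in size by $|\Irr_r(G)|=r$; once it stabilizes to $\Irr_r(G)$ at some step $k_0$, we have $\Irr_r(G)\subseteq \Irr(G/G_{k_0})$, the desired conclusion. To bound $k_0$ by a function of $r$, I would carry out a Clifford-theoretic analysis: a character $\chi\in \Irr_r(G/G_{k+1})\setminus\Irr_r(G/G_k)$ restricts to $G_k/G_{k+1}$ as a sum of copies of a non-trivial central character of this factor, and the $r$-bounded rank of $G_k/G_{k+1}$ controls how many such ``new'' real characters can appear at each step, turning the bounded total $|\Irr_r(G)|=r$ into a bound on $k_0$. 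The main obstacle is making this bookkeeping tight enough to obtain a clean $r$-bound rather than a bound depending on further invariants of $G$.
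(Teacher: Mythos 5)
Part (1) is correct and is exactly the paper's argument. For part (2) your ingredients are essentially the intended ones: the paper simply cites Exercise 7 of Chapter 2 of \cite{DDMS}, whose solution runs through precisely your orbit-counting lemma (a $2$-subgroup of $\GL_d(\F_2)$ has at least $d$ orbits on $\F_2^d\setminus\{0\}$, hence every normal elementary abelian subgroup of every quotient has $r$-bounded rank) followed by a standard structural theorem. Of your two suggested ways to finish, the first does not work as stated: iterating $\rk(G)\le d(G)+\rk(\Phi(G))$ along the Frattini series produces a sum with as many terms as the Frattini length of $G$, which is not $r$-bounded. The second route (a maximal normal abelian, or critical, subgroup $A$ with $C_G(A)=A$, so that $G/A$ embeds in $\Aut(A)$) is the standard correct one, so (2) is acceptable as a sketch.

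Part (3) has a genuine gap. The paper does not prove this from scratch: it invokes Theorem C of \cite{ST}, which asserts the existence of a normal subgroup $N$ of $r$-bounded \emph{index} with $\Irr_r(G)\subseteq\Irr(G/N)$, and then only uses nilpotency to replace $N$ by some $G_k$. That theorem is a substantial result, and your proposed bookkeeping does not recover it. Concretely: knowing that the non-decreasing sequence $|\Irr_r(G/G_k)|$ is bounded by $r$ only tells you it strictly increases at most $r$ times; it does not bound the step $k_0$ at which it reaches its final value, because the ``levels'' $k$ at which new real characters appear (i.e.\ the minimal $k$ with $G_k\le\ker\chi$) need not form an initial segment of $\N$ --- a priori $G$ could have a single faithful real irreducible character while having enormous Frattini length, and nothing in your Clifford-theoretic observation (that such a $\chi$ lies over a nontrivial linear character of the central factor $G_k/G_{k+1}$, which is automatically real since it has order $2$) forces new real characters to appear at every intermediate level. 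Excluding exactly this scenario is the content of \cite[Theorem C]{ST}. So either cite that result, as the paper does, or supply a genuinely new argument that a $2$-group with a real irreducible character of level $k$ must have at least (roughly) $k$ real irreducible characters; the latter is not achieved by the counting you describe.
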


\begin{proof}
Involutions are real elements, because an element of order $2$ is its own inverse, so  (1)  follows from 
Corollary \ref{classquot}. Now (2)   follows from (1)  and exercise 7  of Chapter 2 in \cite{DDMS}. 
Finally, by  \cite[Theorem C]{ST}, there exists a normal subgroup $N$ of $G$ of $r$-bounded
index in $G$ such that $\Irr _r(G)\subseteq \Irr (G/N)$. Thus  (3)   follows from the nilpotency of $G$. 
\end{proof}

\section{Profinite group preliminaries}
In this section we extend  some of  the results about real characters and real classes from the previous section to   profinite  groups.

If $P$ is a profinite group we denote by $\Irr (P)$ the set of irreducible continuous complex characters of $P$, i.e. $\Irr (P)$ is the union of  $\Irr (\bar P)$ for all the finite continuous quotients $\bar P$ of $P$. As in the finite  order  case, we denote by $\Irr _r(P)$ the subset of characters of $\Irr (P)$ taking only real values. We say that a profinite group $P$ is {\bf $\R$-finite} if the set $\Irr _r(P)$ is finite.
It is not true in general   that   for a profinite group   $P$   the number of conjugacy classes of real elements is equal to the cardinality of $\Irr _r(P)$,  as it is  the case for finite groups. 
For example, in  a non-abelian free pro-2 group    the trivial element forms the unique real conjugacy class,   but the number of real irreducible characters is infinite.

\begin{Lemma}\label{realprofinite} Let $P$ be a profinite group. Then the following holds.
\begin{enumerate}
\item An element $g\in P$ is real if $gN$ is real in $P/N$ for any open normal subgroup $N$ of $P$.\label{guai}
\item  Let $\mathcal S$ be a    closed subset of $P$, $S$  the set of  $P$-real   elements of $\mathcal S$ and $N$ a normal open subgroup of $P$. Then there exists a normal open subgroup $L$ of $P$ contained in $N$ such that the $P/L$-real elements of $\mathcal SL/L$ lie in $SN/L$; 
\item $|\Irr _r(P)|=\sup_{N\nor_o P}|\Irr _r(P/N)|$. 
\item If $P$ is  $\R$-finite, then it has at most $|\Irr _r(P)|$ real conjugacy classes. In particular, every real element of $P$ has finite order in this case.
\end{enumerate}
\end{Lemma}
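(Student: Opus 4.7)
All four parts rest on compactness of the profinite group $P$ together with the fact that every continuous irreducible character of $P$ factors through an open normal subgroup.

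For part (1), I would fix $g\in P$ and for each open normal $N$ consider the closed set $X_N := \{h\in P : h^{-1}gh\cdot g \in N\}$; by hypothesis each $X_N$ is non-empty, and $X_{N_1\cap N_2}\subseteq X_{N_1}\cap X_{N_2}$, so the family has the finite intersection property. Compactness of $P$ and $\bigcap_N N = \{1\}$ then produce $h\in P$ with $h^{-1}gh = g^{-1}$. Part (2) is the same kind of argument run by contradiction: if the statement failed for some $N$, then for every open normal $L\subseteq N$ there would be $s_L\in\mathcal S$ and $h_L\in P$ with $h_L^{-1}s_L h_L\cdot s_L\in L$ but $s_L\notin SN$. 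Setting, for each open normal $M$,
\[
T_M := \{(s,h)\in (\mathcal S\setminus SN)\times P : h^{-1}s h\cdot s\in M\},
\]
these sets are closed and non-empty in the compact space $(\mathcal S\setminus SN)\times P$ (note that $SN$ is open, so $\mathcal S\setminus SN$ is closed), and they have the finite intersection property. Any $(s,h)\in\bigcap_M T_M$ would satisfy $h^{-1}sh = s^{-1}$ with $s\in\mathcal S\setminus SN$; but then $s$ would be a $P$-real element of $\mathcal S$, so $s\in S\subseteq SN$, a contradiction.

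For part (3), every $\chi\in\Irr_r(P)$ factors through some finite quotient $P/N$; the inequality $\sup|\Irr_r(P/N)|\le |\Irr_r(P)|$ is immediate, and the reverse follows because any finite subset of $\Irr_r(P)$ lies in $\Irr_r(P/N)$ for the intersection $N$ of the relevant kernels. When $k:=|\Irr_r(P)|$ is finite, this yields an open normal $N_0$ with $|\Irr_r(P/N)|=k$ for every open normal $N\subseteq N_0$, so by Brauer's identity (Lemma~\ref{Brauer}) each such finite quotient $P/N$ has exactly $k$ real conjugacy classes.

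For part (4) I would proceed in two steps. The bound $r(P)\le k$ follows by contradiction: suppose $g_1,\dots,g_{k+1}$ are pairwise non-conjugate real elements of $P$. For each pair $i\ne j$, the same compactness reasoning as in part (1), applied to the closed sets $\{h\in P : h^{-1}g_ih\cdot g_j^{-1}\in N\}$, yields an open normal $N_{ij}$ in which $g_i$ and $g_j$ remain non-conjugate (otherwise intersecting over $N$ would make them conjugate in $P$); intersecting these $N_{ij}$'s with $N_0$ produces a finite quotient of $P$ with at least $k+1$ real classes, contradicting the previous paragraph. The hardest step is showing every real element has finite order, and the key idea is to exploit powers of $g$ to produce real elements converging to the identity. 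Concretely, if $g$ were real of infinite order, taking $m_i := i!$ gives $m_i\to 0$ in $\widehat{\Z}$, hence $g^{m_i}\to 1$ in the closed procyclic subgroup $\overline{\langle g\rangle}$; each $g^{m_i}$ is real (extending $h^{-1}g^m h=g^{-m}$ from $m\in\Z$ to all $m\in\widehat{\Z}$ by continuity), so the $g^{m_i}$ all lie in the finite union of closed real $P$-conjugacy classes, and by pigeonhole an infinite subsequence stays in a single closed class $C_*$. Closedness of $C_*$ forces $1\in C_*$, so $C_* = \{1\}$, and hence $g^{m_i}=1$ for infinitely many $i$, contradicting infinite order.
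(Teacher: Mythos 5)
Your proposal is correct, and for parts (1), (2), (3) and the class-counting half of (4) it is essentially the paper's own proof: the same compactness/finite-intersection argument on the closed sets $\{h\in P: h^{-1}gh\cdot g\in N\}$ for (1), the same contradiction argument for (2) (the paper phrases it with sets $X_L\subseteq \mathcal{S}L$ rather than your product sets $T_M$, but the mechanism is identical), and the same reduction of (3) and of $r(P)\le|\Irr_r(P)|$ to Brauer's equality in finite quotients together with the fact that non-conjugacy is detected in some finite quotient.

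The one place you genuinely diverge is the final claim that a real element of an $\R$-finite group has finite order. The paper takes a real $x$ of infinite order, observes that the set of orders $o(xN)$ over open normal $N$ is infinite, and shows that the corresponding powers $x^m$ are pairwise non-conjugate because they are distinguished by their orders in suitable finite quotients; since powers of real elements are real, this yields infinitely many real classes. You instead use that $i!\to 0$ in $\widehat{\Z}$, so $g^{i!}\to 1$ in the procyclic group $\overline{\langle g\rangle}$, that each $g^{i!}$ is real, and that conjugacy classes in a compact group are closed, forcing infinitely many of these powers into the class $\{1\}$ and contradicting infinite order. Both arguments are valid and both lean on the already-established finiteness of the number of real classes; yours replaces the order-comparison bookkeeping by a topological closedness argument, at the cost of invoking the continuity of $\lambda\mapsto g^\lambda$ on $\widehat{\Z}$ (which, as you note, is not even strictly needed since only integer exponents occur).
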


\begin{proof}
First we prove (1). Suppose that $gN$ is real in $P/N$ for all open normal subgroups $N$ of $P$, 
and define $X_N=\{ x\in P \,|\, (gN)^x=(gN)^{-1} \}\neq \emptyset$ for each $N\nor_o P$.
Note that $\{X_N\}_{N\nor_o P}$ is an inverse system of compact sets with the inclusion maps, 
since $X_N\subseteq X_M$ if $N\subseteq M$ and $N, M\nor_o P$.  Then the inverse limit of this 
inverse set is non-empty (\cite[Proposition 1.1.4]{RZ}), so there exists $x\in P$ such that $gg^x\in N$ 
for all $N\nor_o P$, and thus $g^x=g^{-1}$, as wanted. 

We work by contradiction in order to obtain (2). For each open normal subgroup
$L$ of $P$ contained in $N$, let $X_L$ be the set of elements $x\in \mathcal SL$
such that $xL$ is real in $P/L$ but $x\not\in SN$, and suppose that $X_L\neq \emptyset$
for each such $L$. 
Since the set formed by such subgroups $L$ is a base of neighborhoods of the identity
element, $SN$ is open and $\mathcal S$ is closed,  arguing as in (1) it follows that there 
exists a real element in $\mathcal S$ not
lying in $SN$, which of course is a contradiction. 

Note that (3) follows from the definition of  $\Irr (P)$, 
and  the first part of (4) follows from (3), 
Lemma \ref{Brauer} and the fact that two elements $x, y\in P$ 
are conjugate in $P$ if and only if $xN, yN$ are conjugate for any open normal subgroup
$N$ of $P$.

Finally, suppose that $P$ is $\R$-finite and $x\in P$ is real of infinite order, so 
$M_{x}=\{ o(xN)\, |\, N\nor_{o} P \}$ is infinite. Thus there exist infinitely many $m\in\N$ such that $x\not\in N_m$ and $x^{m}\in N_m$ for some $N_m\nor_{o} P$ depending on $m$, which implies that $x$ is not conjugate to $x^{m}$. Similarly, 
$x^{m}$ is not conjugate to $x^{n}$ if $n,m\in M_x$ with $n\neq m$. Since powers of real elements are real, 
$P$ has infinitely many real classes, which is a contradiction. 
\end{proof}

The following corollary is a consequence of the previous lemma and Corollary  \ref{subgroupfinite}.

\begin{Corollary} 

\label{subgroupprofinite} Let $Q$ be a profinite group and $P$ an open subgroup of index 2. Assume that there are no real elements of $Q$ in $Q\setminus P$. Then there exists an open normal subgroup $L$ of $Q$ contained in $P$ such that for any normal open subgroup $M$ of $Q$ contained in $L$, $r(P/M)\equiv 0\pmod 2$.   Thus   if $ |\Irr _r(P)|$ is finite, then it is even.
\end{Corollary}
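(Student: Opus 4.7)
The plan is to apply Lemma \ref{realprofinite}(2), taking the profinite group there to be $Q$, the closed set to be $\mathcal{S}=Q\setminus P$ (closed since the open subgroup $P$ is clopen), and the open normal subgroup to be $N=P$ itself. By hypothesis the set $S$ of $Q$-real elements of $\mathcal{S}$ is empty, so the lemma furnishes an open normal subgroup $L\nor Q$ with $L\subseteq P$ such that no element of $(Q\setminus P)L/L=(Q\setminus P)/L$ is $Q/L$-real; the identification uses that $L\subseteq P$ forces $(Q\setminus P)L=Q\setminus P$.

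Next, for any open normal $M\nor Q$ with $M\subseteq L$, I would argue by contradiction: if some $g\in Q\setminus P$ yielded a real element $gM$ of $Q/M$, then projecting further to $Q/L$ would display $gL$ as a real element in $(Q\setminus P)/L$, contradicting the previous paragraph. Hence no real class of $Q/M$ meets $(Q/M)\setminus(P/M)$, and Corollary \ref{subgroupfinite} applied to the finite group $Q/M$ with its index-$2$ subgroup $P/M$ forces $k(P/M)$ to be even; the parity statement of the opening lemma of Section 2 then gives $r(P/M)\equiv 0\pmod 2$.

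For the ``in particular'' conclusion, assume $|\Irr_r(P)|$ is finite. Lemma \ref{realprofinite}(3) provides some $N_0\nor_o P$ realizing the supremum, so that $|\Irr_r(P/N_0)|=|\Irr_r(P)|$. The one subtlety is that $N_0$ is normal only in $P$ and not necessarily in $Q$; to remedy this I would fix any $a\in Q\setminus P$ and set $M=(L\cap N_0)\cap (L\cap N_0)^a$. A direct check, using $P\nor Q$ and $a^2\in P$, shows that $M$ is open and normal in $Q$, and clearly $M\subseteq L\cap N_0$. The first part then yields $r(P/M)$ even, and since $M\subseteq N_0$ inflation gives
$$\Irr_r(P/N_0)\subseteq \Irr_r(P/M)\subseteq \Irr_r(P),$$
whose outer cardinalities coincide by the choice of $N_0$. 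Invoking Lemma \ref{Brauer} we conclude $|\Irr_r(P)|=|\Irr_r(P/M)|=r(P/M)$, which is even.

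The main obstacle is that last step: locating an open normal subgroup of $Q$ (not merely of $P$) inside a prescribed $N_0\nor_o P$ so that both the parity statement of the first part applies and every real character of $P$ survives in $P/M$. The two-term conjugate intersection above handles this cleanly, after which everything is a direct chain of applications of the results already developed in Sections 2 and 3.
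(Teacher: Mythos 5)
Your proof is correct and takes essentially the same route the paper intends: the paper gives no argument beyond citing Lemma \ref{realprofinite} and Corollary \ref{subgroupfinite}, which are precisely the two ingredients you combine (via the parity lemma and Lemma \ref{Brauer}). The one detail left implicit in the paper --- producing an open subgroup normal in $Q$, not just in $P$, inside $L\cap N_0$ --- is handled correctly by your intersection with the single $a$-conjugate, which is exactly the core of $L\cap N_0$ in $Q$.
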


\section{$\R$-finite pro-2 groups and pro $2$-groups of finite rank}
In this section we include some basic facts about pro-$p$ groups of finite rank and analyze their  real characters and real classes.

Let $P$ be a profinite group. Denote by $d(P)$ the minimal cardinality of a topological generating set of $P$ (see  \cite[p.\, 20]{DDMS}). Then the {\bf rank} of $P$ is defined to be the supremum of $d(H)$, where $H$ ranges over the closed subgroups of $P$. 
Pro-$p$ groups of finite rank are very well understood, and we refer the reader to \cite{DDMS} for different  equivalent characterizations of this class of groups. We say that a finitely generated pro-$p$ group $N$ is {\bf uniform} if $N$ is torsion-free and $[N,N]\le N^{2p}$. It is a  fundamental result that the pro-$p$ groups of finite rank are exactly the virtually uniform pro-$p$ groups, that is the pro-$p$ groups
having a (normal) uniform subgroup  $U$  of finite index. From this we  immediately obtain that a pro-$p$ group $P$ of finite rank has a unique maximal finite normal subgroup, denoted $rad_f(P)$. In fact, one can obtain a stronger result.

\begin{Lemma}\label{ccs} Let $P$ be a pro-$p$ group of finite rank. Then there are only finitely many conjugacy classes of finite subgroups in $P$.
\end{Lemma}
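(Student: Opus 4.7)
The plan is to reduce, via the virtually uniform structure of $P$, to a cohomological finiteness statement about complements of a normal subgroup.

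Since $P$ has finite rank, it has an open normal uniform subgroup $U$, which is in particular torsion-free. Hence any finite subgroup $F\le P$ satisfies $F\cap U=\{1\}$, so $F$ injects into the finite group $P/U$, and in particular $|F|\le [P:U]$. The product $FU$ is an open subgroup of $P$ containing $U$, and there are only finitely many such (they correspond to subgroups of $P/U$). After replacing $F$ by a $P$-conjugate we may therefore assume $FU=H$ for one of finitely many fixed open subgroups $H\le P$ with $U\unlhd H$. The task thus reduces to showing that, for each such $H$, the complements of $U$ in $H$ form only finitely many $H$-conjugacy classes.

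For this reduced problem I would induct on the $\Q_p$-dimension of $U$. The base case $\dim U=0$ is trivial, since then $U=\{1\}$ and $H$ itself is finite. For the inductive step, choose a closed characteristic abelian subgroup $A$ of $U$ of positive dimension -- for instance, the last non-trivial term of the derived series of the solvable radical of the $\Q_p$-Lie algebra of $U$ -- which is automatically normal in $H$. By the induction hypothesis applied to $H/A$, the complements of $U/A$ in $H/A$ fall into finitely many $(H/A)$-conjugacy classes. For each fixed such complement $\overline F$, the complements of $U$ in $H$ lifting $\overline F$ are precisely the complements of $A$ in the preimage of $\overline F$ in $H$, and so they are classified up to $A$-conjugacy (and a fortiori up to $H$-conjugacy) by the cohomology set $H^1(\overline F,A)$. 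Since $\overline F$ is a finite $p$-group and $A$ is a finitely generated $\Z_p$-module, this set is annihilated by $|\overline F|$ and is itself a finitely generated $\Z_p$-module, hence finite, which completes the induction.

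The main obstacle is the existence of the characteristic abelian subgroup $A$ of positive dimension used in the inductive step: if the $\Q_p$-Lie algebra of $U$ is semisimple, no such $A$ exists, and a separate argument is needed. In that case, however, $U$ is torsion-free with a rather restricted group of outer automorphisms, so the conjugacy classes of complements can be controlled directly using structural results on compact semisimple $p$-adic analytic groups (for example via Bruhat--Tits theory on their parahoric subgroups).
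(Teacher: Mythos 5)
The paper itself disposes of this lemma by citing Theorem 4.23 of \cite{DDMS}, so the real question is whether your from-scratch argument is complete. Your reduction to counting conjugacy classes of complements of a fixed open normal uniform subgroup $U$ in finitely many open subgroups $H$ is correct, and the cohomological step ($H^1(\overline F,A)$ finite for $\overline F$ a finite $p$-group and $A$ a finitely generated $\Z_p$-module) is sound. But there is a genuine gap, and it sits exactly where you flag it: the case where the Lie algebra of $U$ is semisimple. This is not a peripheral exception. Your induction strips away the solvable radical of $\LL(U)$ one abelian layer at a time, so whenever $P$ is non-solvable the induction does not bottom out at $\dim U=0$ but at a quotient whose uniform subgroup has semisimple Lie algebra; the stated base case is therefore the wrong one. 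Worse, in the present paper the groups to which this lemma is applied are open subgroups of $\Aut(\sli_1(D))$, whose Lie algebra is simple, so the unproved case is the only case that actually occurs. The appeal to ``structural results on compact semisimple $p$-adic analytic groups (for example via Bruhat--Tits theory)'' is a gesture, not an argument, and as it stands the proof is incomplete.

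The fix, which is essentially what \cite{DDMS} does, avoids any case distinction on $\LL(U)$: one proves directly that for a finite group $K$ acting on a uniform pro-$p$ group $N$, the complements of $N$ in $K\ltimes N$ fall into finitely many conjugacy classes, using the characteristic filtration $N\ge N^p\ge N^{p^2}\ge\cdots$. Each quotient $N^{p^i}/N^{p^{i+1}}$ is elementary abelian and isomorphic to $N/N^p$ as a $K$-module, so the obstruction to conjugating two complements that already agree modulo $N^{p^i}$ lies in the single fixed finite group $H^1(K,N/N^p)$; a compactness (inverse limit) argument of the same kind you see in Lemma \ref{realprofinite} then shows that complements which are conjugate modulo every $N^{p^i}$ are genuinely conjugate, giving the finiteness. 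This replaces your need for a characteristic abelian subgroup of positive dimension by the abelian \emph{subquotients} that every uniform group possesses, and is the route you should take if you want a self-contained proof rather than the citation the paper uses.
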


\begin{proof}
See Theorem 4.23 of \cite{DDMS}.
\end{proof}

Suppose that $P$ is a finitely generated pro-$2$ group. As in the finite order case, consider the following series of subgroups of $P$:
$$
P_1=P,\ \ \ P_{k+1}=[P_k,P]P_k^2 \ (k\ge 1).
$$ If $N$ is a uniform pro-$2$ group, then $N_k=N^{2^{k}}$ and $\{ N_k \}_{k\in\N}$ is a base of open neighbourhoods of
the identity.

In  the   next lemma we collect  some basic properties of $\R$-finite pro-2 groups.

\begin{Lemma} \label{propertiesRfinite}
Let $P$ be a   $\R$-finite pro-2 group. Then the following holds.
\begin{enumerate}
\item 
$P$ has finite  rank;
\item there exists a normal open subgroup $N$ of $P$ such that $N$ is uniform and $\Irr_r(P)\subseteq \Irr(P/N)$;
\item any element of $P$ of  finite order belongs to $rad_f(P)$.\end{enumerate}
\end{Lemma}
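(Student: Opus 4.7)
For part (1), I plan to exploit that each finite quotient $P/K$ has at most $r:=|\Irr_r(P)|$ real irreducible characters (every real character of $P/K$ lifts to a real character of $P$), hence at most $r$ real conjugacy classes by Lemma \ref{Brauer}. Then Lemma \ref{fgroupsrclases}(2) bounds the rank of $P/K$ by a function of $r$, and taking the supremum over $K$ bounds $\rk(P)$. For part (2), finiteness of $\Irr_r(P)$ yields an open normal $K\nor_o P$ with $\Irr_r(P)\subseteq\Irr(P/K)$; using (1) to fix a uniform open normal $V\nor_o P$ and setting $N:=V^{2^n}$ for $n$ large enough that $V^{2^n}\subseteq K$ gives the required $N$ (the standard theory of uniform pro-$2$ groups ensures $V^{2^n}$ is itself uniform, open and normal in $P$).

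The crux of part (3) is the following claim: every torsion element $x\in P$ centralizes $N$. I will prove this by induction on $k$ where $2^k=|x|$. Let $\phi_x$ denote the automorphism of the $\Q_2$-Lie algebra $L_N$ of $N$ induced by conjugation by $x$. The inductive hypothesis applied to $x^2$ gives $\phi_x^2=\mathrm{id}$, so the eigenvalues of $\phi_x$ lie in $\{\pm 1\}$. If $-1$ were an eigenvalue, there would be a nonzero $u\in\log N$ with $\phi_x(u)=-u$, so $n:=\exp u\in N\setminus\{1\}$ would satisfy $xnx^{-1}=n^{-1}$; thus $n$ would be real in $P$. But $n$ lies in the torsion-free uniform group $N$ and hence has infinite order, contradicting Lemma \ref{realprofinite}(4). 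Therefore $\phi_x=\mathrm{id}$.

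Granted the claim, let $T$ be the set of torsion elements of $P$ and $H:=\overline{\la T\ra}$, a closed $P$-normal subgroup contained in $C_P(N)$. Then $H\cap N\subseteq Z(N)$ and is centralized by $H$, giving $H\cap N\subseteq Z(H)$; combined with the finite embedding $H/(H\cap N)\hookrightarrow P/N$, this forces $|H/Z(H)|<\infty$. Schur's theorem then makes $[H,H]$ finite, and $H/[H,H]$, being an abelian pro-$2$ group of finite rank generated by the images of torsion elements, is isomorphic to $\Z_2^d\oplus F$ with $F$ finite; since the torsion-free summand $\Z_2^d$ admits no nonzero torsion generator, we must have $d=0$ and so $H/[H,H]=F$ is finite. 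Hence $H$ itself is finite, $H\subseteq rad_f(P)$, and every torsion element of $P$ lies in $rad_f(P)$. The main obstacle is the key claim: without $\R$-finiteness Lemma \ref{realprofinite}(4) is unavailable, and indeed in the finite-rank pro-$2$ group $\Z_2\rtimes C_2$ with inversion action the $-1$-eigenspace fills $L_{\Z_2}$ and produces uncountably many involutions outside the trivial $rad_f$, so the contradiction engine driving the argument genuinely relies on the $\R$-finite hypothesis.
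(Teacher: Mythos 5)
Your argument is correct and follows essentially the same route as the paper: (1) and (2) come from Lemma \ref{fgroupsrclases}, and (3) hinges on showing that torsion elements centralize a uniform open normal subgroup $N$ --- since otherwise one produces a nontrivial real element inside the torsion-free group $N$, which would have infinite order against Lemma \ref{realprofinite}(4) --- followed by an application of Schur's theorem. The only differences are in execution: the paper reduces to an involution $x$ (after passing to $P/rad_f(P)$) and observes that each commutator $[x,g]$, $g\in N$, is inverted by $x$, rather than your induction on the order of $x$ together with a $-1$-eigenvector of $ad(x)$ on $\mathbf{log}(N)$ (which is the equivalence (1)$\Leftrightarrow$(2) of Lemma \ref{eigenvalue}); and it concludes by applying Schur to $C_P(N)$ and using $rad_f(P)=\{1\}$ to force $C_P(N)$ abelian, instead of your direct finiteness argument for $\overline{\la T\ra}$ via the structure of finitely generated abelian pro-$2$ groups.
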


\begin{proof}  The first two propositions follow from  Lemma \ref{fgroupsrclases} (2,3).
In order to show (3) it is enough to prove that if $rad_f(P)=\{1\}$, then $P$ is torsion free.
 Thus, assume that $rad_f(P)=\{1\}$ and that there exists  $x\in P$ of order 2. 
   Let $N$ be a uniform normal open subgroup of $P$. 
 Then, since the elements of the form $[x,g]$ with $g\in N$ are   inverted by $x$, we have that they have finite order by Lemma \ref{realprofinite} (4). On the other hand, $[x, N]\leq N$ and $N$ is torsion free, so $x\in C_P(N)$. 
Now since $N$ has finite index in $P$, then $C_P(N)$ is virtually  central. Hence its derived subgroup is finite, by Schur's   theorem  \cite[Theorem 10.1.4]{Rob}. Since $rad_f(C_P(N))\leq rad_f(P)=\{1\}$, we deduce that $C_P(N)$ is abelian. Thus $x\in rad_f(C_P(N))=\{1\}$, a contradiction. 
\end{proof}

Thus, in order to understand $\R$-finite pro-2 groups we have to look at pro-2 groups of finite rank. The most powerful method to study these groups is based on the Lie method that we describe  next.  

Let $L$ be a Lie $\Z_p$-algebra. We say that $L$ is {\bf uniform} if for some $k$, 
$L\cong \Z_p^k$ as  a $\Z_p$-module and
$[L,L]\subseteq 2pL$. 
One can define the functors $\mathbf{exp}$ and $\mathbf{log}$ between the categories of uniform Lie $\Z_p$-algebras and uniform pro-$p$-groups in such a way that these two functors
are   isomorphisms of categories (see \cite[Chapter 4]{DDMS}).  There is a relatively easy way to define the functor $\mathbf {log}$. If $N$ is a uniform pro-$p$ group, then 
  every element $x\in N^{p^i}$ has a unique $p^i$th root in $N$, denoted $x^{1/{p^{i}}}$. Now $\mathbf {log}(N)$ is defined to be the Lie $\Z_p$-algebra whose underlying set coincides with $N$, and which has
Lie operations defined as follows: 
\begin{equation}
\label{lieoperations} a+b=\lim_{i\to \infty} (a^{p^{i}}b^{p^{i}})^{1/p^{i}},\ [a,b]_L=\lim_{i\to \infty}  [a^{p^{i}},b^{p^{i}}]^{1/p^{2i}},\  \forall a,b\in N\, ,
\end{equation} where $[a, b]=a^{-1}b^{-1}ab$  is the commutator defined in the group $N$.  
We write $L_{\Q_p}(N)=\mathbf {log}(N) \otimes_{\Z_p}\Q_p$ and   we refer to this Lie $\Q_p$-algebra as the Lie algebra associated to $N$.

Let $P$ be a pro-$p$ group of finite rank and $N$ an open normal uniform subgroup of $P$. Then   conjugation by $P$ provides $\mathbf {log}(N)$ with the structure of a $P$-module,
via the adjoint representation. 
For $g\in P$ we denote by $ad(g)\in \End_{\Z_p}(\mathbf{log} (N))$  the endomorphism corresponding to the conjugation  action  by $g$:
$$ad(g)(n)=g^{-1}ng,\ \forall n\in N.$$ By a slight abuse of notation, we denote also by $ad(g)$ the extension of $ad(g)$  to $L_{\Q_p}(N)$. The Lie algebra $L_{\Q_p}(N)$, regarded as a $P$-module via  the action of $P$ on it by conjugation, does not depend on the choice of $N$ and it is an invariant of $P$, called the Lie algebra $\LL(P)$ associated to $P$.  If we consider only the Lie structure on $\LL(P)$, then  $\LL(P)$ is a virtual invariant of $P$ (i.e.  the Lie algebra associated to an open subgroup of $P$ is isomorphic to $\LL(P)$).

\begin{Lemma}\label{isom} Let $P$ be a pro-$p$ group of finite rank and $N$ a normal uniform subgroup of $P$. For each $k\ge 0$, we put  $N_k=N^{p^k}$. Let $i,j\in \N$ be such that $i\le j\le 2i+1$. Then $N_i/N_j$ is abelian and 
 $$N_i/N_j\cong \mathbf{log} (N)/p^{j-i}\mathbf {log}(N)$$ as $P$-modules  ($P$ acts on $N_i/N_j$ by conjugation
 and on $\mathbf{log} (N)/p^{j-i}\mathbf {log}(N)$ via the adjoint map).
 \end{Lemma}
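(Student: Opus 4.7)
The plan is to exploit the equivalence of categories between uniform pro-$p$ groups and uniform Lie $\Z_p$-algebras provided by $\mathbf{log}$ and $\mathbf{exp}$, together with the identification $p^k\mathbf{log}(N)=N_k$ \emph{as sets} (which holds because scalar multiplication by $p^k$ in $\mathbf{log}(N)$ coincides, from formula~\eqref{lieoperations}, with taking $p^k$-th powers in $N$). This immediately gives a set-theoretic bijection
$$\varphi\colon p^i\mathbf{log}(N)\big/p^j\mathbf{log}(N)\longrightarrow N_i\big/N_j,$$
induced by the identity map on the underlying sets, and everything will reduce to showing that this bijection is a $P$-equivariant group isomorphism.

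I would begin by verifying that $N_i/N_j$ is abelian. Standard commutator estimates for uniform pro-$p$ groups (see Chapter~4 of \cite{DDMS}, notably the results underlying Theorem~4.8) give $[N_a,N_b]\subseteq N_{a+b+1}$, and so $[N_i,N_i]\subseteq N_{2i+1}\subseteq N_j$, since by hypothesis $j\le 2i+1$. This is the step where the range of $j$ is actually used.

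Next I would check that $\varphi$ is a group homomorphism, i.e.\ that for $a,b\in N_i$ one has $ab\equiv a+b\pmod{N_j}$, where $a+b$ denotes the Lie sum. Using the limit formula \eqref{lieoperations}, the discrepancy between $ab$ and $a+b$ is built from commutators $[a^{p^k},b^{p^k}]$, each of which lies in $[N_{i+k},N_{i+k}]\subseteq N_{2(i+k)+1}$; after extracting $p^k$-th roots these contributions remain inside $N_{2i+k+1}\subseteq N_j$, so they vanish in the quotient. The conceptually cleanest packaging of this argument is via the Baker--Campbell--Hausdorff formula underlying Lazard's construction of $\mathbf{log}$: for $a,b\in p^i\mathbf{log}(N)$, every BCH term beyond the linear one is an iterated Lie bracket of elements of $p^i\mathbf{log}(N)$, hence lies in $p^{2i+1}\mathbf{log}(N)=N_{2i+1}\subseteq p^j\mathbf{log}(N)$. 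This is the main technical obstacle, and again it is exactly the constraint $j\le 2i+1$ that makes the quadratic BCH correction absorbed into $N_j$.

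Finally, the $P$-equivariance of $\varphi$ is essentially automatic: by construction of $\LL(P)$, conjugation by any $g\in P$ acts on $\mathbf{log}(N)$ as a Lie $\Z_p$-algebra automorphism and preserves the filtration $\{N_k\}$, since each $N_k$ is characteristic in $N$. Composing $\varphi$ with the obvious $P$-equivariant isomorphism $p^i\mathbf{log}(N)/p^j\mathbf{log}(N)\cong \mathbf{log}(N)/p^{j-i}\mathbf{log}(N)$ (multiplication by $p^{-i}$ on the $\Z_p$-module $\mathbf{log}(N)$) then delivers the stated isomorphism.
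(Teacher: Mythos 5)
Your argument is correct and follows essentially the same route as the paper, whose entire proof is the one-line remark that the lemma ``is a consequence of the definition of sum in (\ref{lieoperations})''; you have simply written out the details (the commutator estimate $[N_i,N_i]\subseteq N_{2i+1}$, the BCH/limit comparison of $ab$ with $a+b$ modulo $N_j$, and the $P$-equivariance) that the authors leave implicit.
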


\begin{proof}
The lemma is a consequence of the definition of sum in   (\ref{lieoperations}).
\end{proof}

The following lemma provides information about some real elements of pro-2 groups of finite rank.
If $N$ is a finite-index normal subgroup of a profinite group $P$, then $\Irr(P|N)$ denotes
the set of irreducible characters of $G$ whose restriction to $N$ is non-trivial.

\begin{Lemma} \label{eigenvalue} Let $P$ be a pro-2 group of finite rank and $N$ a normal uniform subgroup of $P$.
Then the following statements are equivalent:
\begin{enumerate}
\item there are  $g\in P$ and $0\ne l\in L_{\Q_2}(N)$   such that $ad(g)(l)=- l$;
\item  there are $g\in P$ and $1\ne n\in N$ such that $g^{-1}ng=n^{-1}$;
\item  for infinitely many $t\in \N$  there are $g_t\in P$ and $\psi_t \in \Irr( N_t/N_{2t}|N_{2t-2}/N_{2t})$ such that $\psi_t^{g_t}=\bar \psi_t$.
\end{enumerate}
\end{Lemma}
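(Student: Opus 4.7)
My plan is to convert each of the three conditions into a statement about the adjoint action of $P$ on $\mathbf{log}(N)$, its rational extension $L_{\Q_2}(N)$, or the dual $\Z_2$-module $\mathbf{log}(N)^{*}=\mathrm{Hom}_{\Z_2}(\mathbf{log}(N),\Z_2)$, and then to derive the implications by standard compactness and duality arguments.

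For $(1)\Leftrightarrow(2)$: under the set-level identification $\mathbf{log}(N)=N$ used in Section~4, the group inverse coincides with the Lie additive inverse (since $a\cdot a^{-1}=1$ yields $a+(-a)=0$ from the limit formula (\ref{lieoperations})), and conjugation by $g$ is by construction $ad(g)$. Hence $g^{-1}ng=n^{-1}$ in $N$ reads verbatim as $ad(g)(n)=-n$ in $\mathbf{log}(N)$. Since $\mathbf{log}(N)\cong\Z_2^k$ embeds in $L_{\Q_2}(N)$, condition (2) implies (1) with $l=n$; conversely, any non-zero $l\in L_{\Q_2}(N)$ with $ad(g)(l)=-l$ may be rescaled by a suitable power of~$2$ into $\mathbf{log}(N)\setminus\{0\}$, producing the required $n\in N\setminus\{1\}$.

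For $(1)\Leftrightarrow(3)$: by Lemma~\ref{isom} I identify $N_t/N_{2t}$ with the abelian group $\mathbf{log}(N)/2^t\mathbf{log}(N)$ as $P$-modules, and then Pontryagin duality identifies $\Irr(N_t/N_{2t})$ with $\mathbf{log}(N)^{*}/2^t\mathbf{log}(N)^{*}$ via $\phi\mapsto\psi_\phi$, $\psi_\phi(x):=e^{2\pi i\,\phi(x)/2^t}$. Under this dictionary the $P$-action on characters becomes $\phi\mapsto ad(g)^{*}\phi$, complex conjugation becomes $\phi\mapsto-\phi$, and the subgroup $N_{2t-2}/N_{2t}$ is identified (for $t\ge 2$) with $2^{t-2}\mathbf{log}(N)/2^t\mathbf{log}(N)$, so that non-triviality of $\psi_\phi$ on this subgroup is equivalent to $\phi\notin 4\mathbf{log}(N)^{*}+2^t\mathbf{log}(N)^{*}$. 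Condition (3) thus translates into: for infinitely many $t$ there exist $g_t\in P$ and $\phi_t\in\mathbf{log}(N)^{*}\setminus 4\mathbf{log}(N)^{*}$ with $ad(g_t)^{*}(\phi_t)\equiv-\phi_t\pmod{2^t\mathbf{log}(N)^{*}}$; whereas (1) is equivalent, via the equality of the characteristic polynomials of $ad(g)$ and $ad(g)^{*}$, to the existence of $g\in P$ and a primitive $\phi\in\mathbf{log}(N)^{*}$ with $ad(g)^{*}(\phi)=-\phi$. The implication $(1)\Rightarrow(3)$ is then immediate with $g_t=g$ and $\phi_t=\phi$ for every $t\ge 2$.

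For $(3)\Rightarrow(1)$: since $\phi_t\notin 4\mathbf{log}(N)^{*}$, the $2$-adic valuation $v_2(\phi_t)$ is $0$ or $1$, so $\phi_t':=\phi_t/2^{v_2(\phi_t)}$ is primitive and satisfies $ad(g_t)^{*}(\phi_t')\equiv-\phi_t'\pmod{2^{t-1}\mathbf{log}(N)^{*}}$. Both $P$ and the clopen set $\mathbf{log}(N)^{*}\setminus 2\mathbf{log}(N)^{*}$ of primitive elements in $\mathbf{log}(N)^{*}\cong\Z_2^k$ are compact, so a diagonal extraction produces a subsequence along which $g_t\to g\in P$ and $\phi_t'\to\phi\in\mathbf{log}(N)^{*}\setminus 2\mathbf{log}(N)^{*}$. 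Continuity of the adjoint representation $P\to\End_{\Z_2}(\mathbf{log}(N))$ then allows me to pass to the limit modulo $2^s$ for every~$s$, giving $ad(g)^{*}(\phi)+\phi\in\bigcap_s 2^s\mathbf{log}(N)^{*}=\{0\}$; since $\phi\ne 0$, this yields~(1). The delicate step is this final limit argument, in which the $2$-adic losses coming from the normalization $\phi_t\mapsto\phi_t'$ and from the continuity of $ad$ (quantified by a Baker--Campbell--Hausdorff estimate of the form $ad(gh)\equiv ad(g)\pmod{2^{r-O(1)}}$ for $h\in N_r$) must both remain bounded, so that the residual error along the subsequence tends to zero.
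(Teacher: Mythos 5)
Your proposal is correct and follows essentially the same route as the paper: (1) and (2) are the same statement up to rescaling a $\Q_2$-eigenvector into the lattice $\mathbf{log}(N)$, and the equivalence with (3) is obtained from Lemma \ref{isom} together with duality and a compactness argument. The only cosmetic difference is that you keep the bookkeeping for (1)$\Leftrightarrow$(3) on the dual module $\mathbf{log}(N)^{*}$ and use sequential compactness with a diagonal extraction, whereas the paper dualizes back to elements $n_tN_{2t}$ of large order and takes the inverse limit of the nested compact sets $X_t\subseteq P\times(\mathbf{log}(N)\setminus 4\mathbf{log}(N))$; the two devices are interchangeable, and your final limit step needs nothing more than continuity of $ad$ (your worry about BCH losses is unnecessary).
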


\begin{proof} (1) and (2) are equivalent statements with different notation
(of course, note that if $-1$ is an eigenvalue of $ad(g)$ in $L_{\Q_p}$, then it is also an eigenvalue
of $ad(g)$ in $\mathbf{log}(N)$).
Suppose (2) holds. Let $t\in \N$. 
The finite group $N_t/N_{2t}$ is abelian of exponent $2^{t}$. If $t$ is large enough, 
by possibly taking a power of $n$ instead of $n$, we can assume that 
$nN_{2t}$ is an element of order $2^{t}$ such that $g^{-1}(nN_{2t})g=(nN_{2t})^{-1}$.
By duality, there exists a character $\psi$ of order $2^{t}$ of $N_t/N_{2t}$ such that $\psi^g=\psi^{-1}=\bar \psi$. 
Since $N_t/N_{2t-2}$ has exponent $2^{t-2}$ we have that $\psi$ is non-trivial on $N_{2t-2}$.

Suppose now  that (3) holds. Note that if  $\psi_t \in \Irr( N_t/N_{2t}|N_{2t-2}/N_{2t})$, then
$\psi_t$ has order at least $2^{t-1}$. Then by duality, for infinitely many
$t\in \N$ there exists $n_t N_{2t}$ in $N_t/N_{2t}$ of order at least
$2^{t-1}$ and $g_t\in P$ such that $g_t^{-1}(n_t N_{2t})g_t=(n_t N_{2t})^{-1}$. 
In particular, note that $n_t\in N_t\setminus N_{t+2}$. By Lemma \ref{isom}, for
any such $t$ there exists $x_t \in \mathbf{log} (N)\setminus 4 \mathbf{log} (N)$ such that
$$ad(g_t)(x_t)\equiv -x_t \pmod{2^t \mathbf{log}(N) }.$$
For any such $t$, let $X_t\subseteq P\times ( \mathbf{log} (N)\setminus 4 \mathbf{log} (N)) $ be
the set of elements $(g,x)$ satisfying $ad(g)(x)\equiv -x \pmod{2^t \mathbf{log}(N) }$, so $X_t$ is non-empty. 
Note that each $X_t$ is closed, and therefore compact. Now $X_{t_1}\subseteq X_{t_2}$
if $t_1 \geq t_2$, and $\{X_t\}_t$ is an inverse system with the inclusion maps. 
The inverse limit of this inverse system is
non-empty (by Theorem 1.4 of \cite{DDMS}), 
so there are $g\in P$ and  $x\in  \mathbf{log} (N)\setminus 4 \mathbf{log} (N)$ such that for infinitely many $t$, $ad(g)(x)\equiv -x \pmod{2^t \mathbf{log}(N) }$. 
Since $\{2^t \mathbf{log}(N)\}$ constitutes a base of open neighbourhoods of the identity element of  $\mathbf{log} (N$), 
we have that $ad(g)(x)=-x$, as wanted.
\end{proof}
 
As a consequence of the above lemma, we obtain that pro-2 groups of finite rank with a finite number of real irreducible characters
admit the following equivalent characterizations.

\begin{Proposition} \label{Rfinite}Let $P$ be a pro-2 group of finite rank. Then the following statements are equivalent:
\begin{enumerate}
\item $P$ is $\R$-finite;
\item any real element of $P$ belongs to $rad_f(P)$;  
\item  $P$ has a finite number of real conjugacy classes;
\item any real element of $P$ has finite order;
\item $-1$ is not an eigenvalue of $ad(g)$ for any $g\in P$. \end{enumerate}
\end{Proposition}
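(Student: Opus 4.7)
The plan is to establish the cycle $(1)\Rightarrow(2)\Rightarrow(3)\Rightarrow(4)\Rightarrow(5)\Rightarrow(1)$. Throughout I fix a normal uniform open subgroup $N$ of $P$, which exists because $P$ has finite rank.

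The first four implications are essentially free from the lemmas already established. For $(1)\Rightarrow(2)$, combine Lemma \ref{realprofinite}(4), which forces every real element of an $\R$-finite profinite group to have finite order, with Lemma \ref{propertiesRfinite}(3), which places every torsion element of an $\R$-finite pro-$2$ group of finite rank inside $rad_f(P)$. The step $(2)\Rightarrow(3)$ is immediate since $rad_f(P)$ is finite. For $(3)\Rightarrow(4)$, I would rerun the second half of the proof of Lemma \ref{realprofinite}(4) verbatim: if a real element $x\in P$ had infinite order, the set $\{o(xK):K\nor_o P\}$ would be infinite and would produce infinitely many powers $x^m$ that are pairwise non-conjugate, each of which is automatically real, contradicting finiteness of the number of real classes. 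For $(4)\Rightarrow(5)$, if $-1$ were an eigenvalue of some $ad(g)$ on $L_{\Q_2}(N)$, then the equivalence of parts (1) and (2) of Lemma \ref{eigenvalue} would produce $g'\in P$ and $1\neq n\in N$ with $(g')^{-1}ng' = n^{-1}$; since the uniform group $N$ is torsion-free, $n$ would be a real element of infinite order, contradicting $(4)$.

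The substantial step is $(5)\Rightarrow(1)$. From the equivalence of (1) and (3) in Lemma \ref{eigenvalue} I would extract an integer $t_0\ge 2$ such that for every $t\ge t_0$ no character $\psi\in\Irr(N_t/N_{2t}\,|\,N_{2t-2}/N_{2t})$ satisfies $\psi^g = \overline{\psi}$ for any $g\in P$. The aim is to show $\Irr_r(P)\subseteq \Irr(P/N_{2t_0-1})$, which is finite and forces $\R$-finiteness. Given $\chi\in\Irr_r(P)$, let $m$ be minimal with $N_m\le\ker\chi$ (such $m$ exists since $\chi$ has finite image) and set $t=\lceil m/2\rceil$, so that $N_{2t}\le\ker\chi$ while $N_{2t-2}\not\le\ker\chi$. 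If $t\ge 2$ then $N_{2t-2}\subseteq N_t$, so at least one, and hence by $P$-invariance of $N_{2t-2}$ all, irreducible constituents $\psi$ of $\chi_{N_t}$ lie in $\Irr(N_t/N_{2t}\,|\,N_{2t-2}/N_{2t})$. By Clifford's theorem the constituents of $\chi_{N_t}$ form a single $P$-orbit, and the reality of $\chi$ forces $\overline{\psi}$ to belong to that orbit, giving $\psi^g = \overline{\psi}$ for some $g\in P$. The defining property of $t_0$ then forces $t < t_0$ and thus $m\le 2t_0-1$, as required.

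The main obstacle I anticipate is this final Clifford-type argument, specifically handling the indexing issue that $N_{2t-2}\subseteq N_t$ only for $t\ge 2$, and ensuring that $t_0$ is taken large enough (at least $2$) so that the dichotomy between ``all constituents of $\chi_{N_t}$ are trivial on $N_{2t-2}$'' and ``none are'' cleanly yields a bound on $m$. Once the indices are set up so that the minimality of $m$ matches the stratification in Lemma \ref{eigenvalue}(3), the rest of the argument is a bookkeeping application of Clifford theory.
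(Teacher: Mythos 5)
Your proposal is correct and follows essentially the same route as the paper: the easy implications are discharged by Lemmas \ref{realprofinite} and \ref{propertiesRfinite} exactly as in the text, and the key step $(5)\Rightarrow(1)$ is the same Clifford-theoretic reduction to Lemma \ref{eigenvalue}, which the paper phrases by contradiction (infinitely many real characters forcing infinitely many admissible $t$) while you phrase it directly by bounding the level of each real character. The indexing caveats you flag are harmless, since any real $\chi$ with $t=1$ already factors through the finite quotient $P/N_2$.
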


\begin{proof} The fact that (1) implies (3) is already proved in Lemma \ref{realprofinite} (4). 
It is evident that (2) implies (3), and (3) implies (4) by the arguments in
the proof of Lemma \ref{realprofinite}.
If we assume (1) and (4), then (2) follows by Lemma  \ref{propertiesRfinite}(3). Hence we have that (1) implies (2). 
Now (5) follows from (4), because
if $N$ is an open normal uniform subgroup of $P$ 
and $-1$ is an eigenvalue of $ad(g)$ in its action on $L_{\Q_p}(N)$, then by Lemma
\ref{eigenvalue} $N$ contains a non-trivial real element of $P$, which would have infinite order
because $N$ is torsion-free.

Let us prove that (1) follows from (5).
By way of contradiction assume that $P$ has infinitely many real irreducible characters $\{\lambda_i\}_{i\in\N}$. 
As above, suppose that $N$ is an open normal uniform subgroup of $P$. Let $t_i$ be such that $\ker \lambda_i\ge N_{2t_i}$ but $\ker \lambda_i\not \ge N_{2t_i-2}$ and let $\psi_i\in \Irr( N_{t_i}/N_{2t_i}|N_{2t_i-2}/N_{2t_i})$   be an irreducible component of the restriction of $\lambda_i$ on $N_{t_i}$. Since $\lambda_i$ is real, there exists $g_i\in P$ such that $\psi_i^{g_i}=\bar \psi_i$. Hence, by  the implication from (3) to (1) in  Lemma \ref{eigenvalue},  $-1$ is an eigenvalue of $ad(g)$ for some $g\in P$, a contradiction.
\end{proof}

\begin{Corollary} \label{subgroupRfinite}Let $P$ be a $\R$-finite  pro-2 group.  Then any closed subgroup of $P$ is also $\R$-finite.
\end{Corollary}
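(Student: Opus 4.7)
The plan is to use the characterization of $\R$-finiteness for pro-$2$ groups of finite rank given by Proposition \ref{Rfinite}, applied both to $P$ and to the closed subgroup $H \leq P$.

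First I would check that $H$ is a pro-$2$ group of finite rank. Since $P$ is $\R$-finite, it has finite rank by Lemma \ref{propertiesRfinite}(1). By the very definition of rank as the supremum of $d(K)$ over closed subgroups, every closed subgroup of $H$ is also closed in $P$, so $\rk(H) \leq \rk(P) < \infty$. Of course $H$ is itself a pro-$2$ group, so the equivalent conditions of Proposition \ref{Rfinite} are available for $H$.

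Next, the key observation: the property ``every real element has finite order'' is inherited by closed subgroups. Indeed, if $h \in H$ is $H$-real, then there exists $h' \in H \subseteq P$ with $(h')^{-1} h h' = h^{-1}$, so $h$ is also a real element of $P$. Since $P$ is $\R$-finite, condition~(4) of Proposition \ref{Rfinite} gives that $h$ has finite order. Hence every real element of $H$ has finite order.

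Finally, applying the implication $(4)\Rightarrow(1)$ of Proposition \ref{Rfinite} to $H$ yields that $H$ is $\R$-finite, as desired. I do not expect any real obstacle here: the entire argument is simply the observation that the ``real element'' relation on $H$ is contained in the ``real element'' relation inherited from $P$, combined with the machinery already developed in Proposition \ref{Rfinite}.
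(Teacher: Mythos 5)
Your proof is correct and follows exactly the paper's own argument: pass to finite rank via Lemma \ref{propertiesRfinite}(1), note that real elements of the closed subgroup are real in $P$ and hence of finite order, and conclude by the equivalence $(4)\Leftrightarrow(1)$ of Proposition \ref{Rfinite}. No issues.
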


\begin{proof} Let $T$ be a closed subgroup of $P$. Since $P$ is $\R$-finite, $P$ is of finite rank, by Lemma \ref {propertiesRfinite}(1), and so $T$ is also of finite rank.  A real element of $T$ is also real in $P$. Since all the real elements of $P$ are of finite order, all the real elements of $T$ are of finite order. Thus, the previous proposition implies that $T$ is $\R$-finite.
\end{proof}

In the next lemma we analyze some real elements of quotients  of a semidirect product $P\rtimes \la \phi\ra$, where $\phi$ is an automorphism of $P$ of order 2.

\begin{Lemma}\label{descent}Let $P$ be  a pro-2 group of finite rank, $N$ a normal open uniform subgroup of $P$ and $\phi$ an automorphism of $P$ of order 2 that fixes $N$. Then for any $s> t\ge 2$ and  for any  $n\in N_t$ such that $\phi(n)N_s=n^{-1}N_s$, there exists $m\in N_{t-1}$ such that $nN_{s-1}=\phi(m^{-1})mN_{s-1}$.
\end{Lemma}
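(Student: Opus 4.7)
The plan is to prove the lemma by induction on $s$, with a Lie-algebraic base case handling $s\le 2t+1$ and a short reduction to this case when $s\ge 2t+2$. Throughout I set $L=\mathbf{log}(N)$, regarded as a uniform Lie $\Z_2$-algebra with $[L,L]\subseteq 4L$; then $N_i=\exp(2^i L)$, and $\phi$ induces a Lie algebra automorphism $\phi_*$ of $L$ with $\phi_*^2=\mathrm{id}$. The uniform condition together with Baker--Campbell--Hausdorff (BCH) gives the commutator bound $[N_i,N_j]\subseteq N_{i+j+2}$, which I use repeatedly.

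For the base case $s\le 2t+1$, set $x=\log n\in 2^tL$. Since $\log(\phi(n)n)=\phi_*x+x+\tfrac{1}{2}[\phi_*x,x]+\cdots$ with the BCH tail contained in $2^{2t+1}L$, the hypothesis $\phi(n)n\in N_s$ forces $\phi_*x+x\in 2^sL$. Let $y:=(x-\phi_*x)/4$; writing $x-\phi_*x=2x-(x+\phi_*x)$ shows $y\in 2^{t-1}L$, so $m:=\exp(y)\in N_{t-1}$. A direct computation gives $\phi_*y=-y$ exactly, so $\phi(m^{-1})m=\exp(y)\exp(y)=\exp(2y)$, and $2y-x=-(x+\phi_*x)/2\in 2^{s-1}L$. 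The BCH correction $[x,y]\in[2^tL,2^{t-1}L]\subseteq 2^{2t+1}L$ is absorbed into $2^{s-1}L$, yielding $\log\bigl(n\cdot(\phi(m^{-1})m)^{-1}\bigr)\in 2^{s-1}L$, i.e.\ $n\equiv\phi(m^{-1})m\pmod{N_{s-1}}$.

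For the inductive step $s\ge 2t+2$, I apply the inductive hypothesis to the pair $(t,s-1)$ to obtain $m_1\in N_{t-1}$ with $n\equiv\phi(m_1^{-1})m_1\pmod{N_{s-2}}$. Put $n_1:=\phi(m_1^{-1})m_1$ and $n':=n_1^{-1}n\in N_{s-2}$; from $\phi(n_1)=n_1^{-1}$ together with $[N_{t-1},N_{s-2}]\subseteq N_{t+s-1}\subseteq N_s$ one checks $\phi(n')n'\in N_s$. Now invoke the lemma for $n'$ with parameters $(s-2,s)$: since $s\le 2(s-2)+1$ (as $s\ge 6$), this falls inside the base case and produces $v\in N_{s-3}$ with $n'\equiv\phi(v^{-1})v\pmod{N_{s-1}}$. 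Taking $m:=m_1v$, we have $m\in N_{t-1}$ because $v\in N_{s-3}\subseteq N_{t-1}$ (using $s\ge t+2$); the inclusion $[N,N_{s-3}]\subseteq N_{s-1}$ makes $N_{s-3}$ central in $N$ modulo $N_{s-1}$, so $\phi(v^{-1})$ and $v$ commute with $m_1,\phi(m_1^{-1})$ modulo $N_{s-1}$, which gives $\phi(m^{-1})m\equiv n_1\phi(v^{-1})v\equiv n_1n'=n\pmod{N_{s-1}}$, as required.

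The main obstacle is the careful BCH bookkeeping: the direct Lie-algebraic choice $m=\exp((x-\phi_*x)/4)$ only reaches precision $N_{s-1}$ in the regime $s\le 2t+1$, and the reduction must land the sub-problem squarely inside this regime. Applying the inductive hypothesis at level $s-1$ places the residual $n'$ in $N_{s-2}$, which is exactly what makes the sub-problem $(s-2,s)$ satisfy $s\le 2(s-2)+1$ with room enough that the remaining error terms are absorbed by the commutator depth.
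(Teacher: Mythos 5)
Your proof is correct. Its inductive skeleton is essentially the paper's: apply the statement at level $s-1$ to get $m_1\in N_{t-1}$, form the residual $n'=(\phi(m_1^{-1})m_1)^{-1}n\in N_{s-2}$, check that $\phi(n')n'\in N_s$, solve a base case for $n'$ to get $v\in N_{s-3}$, and recombine using that $N_{s-3}/N_{s-1}$ is central in $N/N_{s-1}$. (The one step you leave implicit does work: writing $\phi(n)=n^{-1}z$ with $z\in N_s$ and using $\phi(n_1)=n_1^{-1}$, one gets $\phi(n')n'=(n')^{-1}(cz)n'$ with $c=n'n_1(n')^{-1}n_1^{-1}\in[N_{s-2},N_{t-1}]\subseteq N_{s+t-1}\subseteq N_s$, exactly the inclusion you cite.) Where you genuinely diverge is the base case, which is the heart of the lemma. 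The paper only needs the range $s-t\le 2$: for $s=t+2$ it takes a square root $m\in N_{t-1}$ of $n$ in the abelian quotient $N_{t-1}/N_{t+2}$ and observes $(\phi(m^{-1})m)^2\equiv\phi(n^{-1})n\equiv n^2\pmod{N_{t+2}}$, whence the claim modulo $N_{t+1}$; this uses nothing beyond elementary commutator facts about the quotients $N_i/N_j$. You instead treat the whole regime $s\le 2t+1$ at once by passing to $L=\mathbf{log}(N)$ and setting $m=\exp\bigl((x-\phi_*x)/4\bigr)$, a scaled projection of $\log n$ onto the $(-1)$-eigenspace of $\phi_*$. This is conceptually more transparent (it explains where $m$ comes from and why $\phi(m^{-1})m$ recovers $n$ to the stated precision) and yields a much larger base case, at the price of the Baker--Campbell--Hausdorff bookkeeping for uniform groups ($[2^iL,2^jL]\subseteq 2^{i+j+2}L$ together with the tail estimates, where $t\ge 2$ guarantees the division by $4$ is legitimate), which the paper's squaring trick avoids entirely. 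Both reductions then land the sub-problem $(n',\,t'=s-2,\,s'=s)$ inside the respective base case, so the two arguments are interchangeable in the rest of the paper.
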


\begin{proof} We prove the lemma    by induction on $s-t$. If $s=t+1$ we can take $m=1$. Consider the case $s=t+2$. Note that $N_{t-1}/N_{t+2}$ is abelian. Let  $m\in N_{t-1}$ be such that $m^2=n$. Then $$(\phi(m^{-1})m)^2N_{t+2}=\phi(n^{-1})nN_{t+2}=n^2N_{t+2}.$$ Hence $nN_{t+1}=\phi(m^{-1})mN_{t+1}$.

Now assume $s>t+2$. Then by induction, there exist $l\in N_{t-1}$ such that $nN_{s-2}=\phi(l^{-1})lN_{s-2}$. Let $k=nl^{-1}\phi(l)$. Since $N_{s-2}/N_{s}\subseteq Z(N/N_s)$, $nN_{s}$ commutes with $\phi(l^{-1})lN_s$ and so
$$\phi(k)kN_s=\phi(n)\phi(l^{-1})lnl^{-1}\phi(l)N_s=N_s.$$
Since $k\in N_{s-2}$, by the case $s-t=2$, there exists $v\in N_{s-3}$ such that $kN_{s-1}=\phi(v^{-1})vN_{s-1}$. Hence, since $N_{s-3}/N_{s-1}\le Z(N/N_{s-1})$,
$$nN_{s-1}=k\phi(l^{-1})lN_{s-1}=\phi(v^{-1})v\phi(l^{-1})lN_{s-1}=\phi((lv)^{-1})lvN_{s-1}.$$
\end{proof}

\section{Just infinite quotients of pro-2 groups with odd number of real characters}  
In this section we start the study of  infinite   pro-2 groups having an odd number of real irreducible  characters.   Recall that a profinite group is called {\bf just infinite} if it is infinite and it does not have proper infinite continuous quotients. Every infinite finitely generated pro-$p$ group has at least one just infinite quotient.  
It turns   out that every just infinite quotient   of  a pro-2 group with an odd number of real irreducible characters also has an odd number of real irreducible characters.   

\begin{Theorem}  Let $P$ be an infinite  pro-2 group with a (finite) odd number of real irreducible characters. 
Then $  P/rad_f(P) $ has also an odd number of real irreducible characters.
\end{Theorem}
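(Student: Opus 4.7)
The plan is to induct on $|R|$, where $R:=\mathrm{rad}_f(P)$; the base case $R=\{1\}$ is immediate. For the inductive step, I first produce a subgroup $Z\le Z(P)\cap R$ of order $2$. Since $R$ is a finite normal $2$-subgroup of $P$, the quotient $P/C_P(R)$ is a pro-$2$ subgroup of the finite group $\mathrm{Aut}(R)$, hence a finite $2$-group, and its action on the nontrivial abelian $2$-group $Z(R)$ has fixed-point set $Z(R)\cap Z(P)$ of order at least $2$. A routine verification---any finite normal subgroup of $P/Z$ pulls back to a finite normal subgroup of $P$ containing $Z$---then shows $\mathrm{rad}_f(P/Z)=R/Z$, so applying the theorem inductively to the infinite pro-$2$ group $P/Z$ reduces everything to proving $|\Irr_r(P/Z)|$ is odd. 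Writing $\Irr_r(P\mid Z)$ for the set of real irreducible characters of $P$ nontrivial on $Z$, this is in turn equivalent to showing that $|\Irr_r(P\mid Z)|=|\Irr_r(P)|-|\Irr_r(P/Z)|$ is even.

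I attack this by tensoring with real linear characters $\mu:P\to\{\pm1\}$, which necessarily factor through $P/\Phi(P)$. If $Z\not\le\Phi(P)$, some such $\mu$ has $\mu(z)=-1$; since $\mu$ is real, tensoring by $\mu$ then gives a bijection $\Irr_r(P/Z)\leftrightarrow\Irr_r(P\mid Z)$, forcing $|\Irr_r(P)|=2|\Irr_r(P/Z)|$ to be even and contradicting the hypothesis. So $Z\le\Phi(P)$, every sign character of $P$ is trivial on $Z$, and tensoring by any $\mu$ preserves $\Irr_r(P\mid Z)$. If additionally $Z(P)\not\le\Phi(P)$, I pick $\mu$ nontrivial on $Z(P)$: comparing central characters gives $\omega_{\chi\otimes\mu}=\omega_\chi\cdot\mu|_{Z(P)}\ne\omega_\chi$, so $\chi\otimes\mu\ne\chi$ for every $\chi\in\Irr_r(P\mid Z)$, and the induced involution on $\Irr_r(P\mid Z)$ is fixed-point-free, giving the required evenness.

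The main obstacle is thus the remaining case $Z(P)\le\Phi(P)$, and this is the only place the infiniteness of $P$ must genuinely enter. Here my plan is to pass to a finite quotient $G=P/N$ for an open normal uniform subgroup $N$ of $P$ with $\Irr_r(P)\subseteq\Irr(P/N)$ (available by Lemma \ref{propertiesRfinite}), so that $\bar Z:=ZN/N$ is central of order $2$ in $G$. A direct computation of the trace of the operator $L:f\mapsto\bigl(g\mapsto f(g^{-1}\bar z)\bigr)$ on the space of class functions of $G$, carried out both in the basis of conjugacy classes and in the basis of irreducible characters---using that $\bar z$ acts as $\pm1$ on each irreducible---yields the identity $|\Irr_r(G\mid\bar Z)|=r(G/\bar Z)-a$, where $a$ counts the conjugacy classes $C$ of $G$ with $C=C^{-1}\bar z$. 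The question thus reduces to the parity identity $r(G/\bar Z)\equiv a\pmod 2$. To prove this, I would apply Lemma \ref{descent} to the inner automorphism of $P$ witnessing the ``weakly real'' condition $hgh^{-1}\equiv g^{-1}\bar z \pmod{N}$, descending through the uniform filtration $\{N_s\}$ of $N$ to match weakly real classes of $G$ with properly real ones level by level. The hard part will be making this matching precise; it is precisely the infiniteness of the filtration, i.e.\ of $P$, that should provide enough room for the descent argument to close up.
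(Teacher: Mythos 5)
Your overall strategy is genuinely different from the paper's: you induct on $|rad_f(P)|$ via a central subgroup $Z$ of order $2$, and your Cases 1 and 2 (tensoring with real sign characters of $P/\Phi(P)$) are correct and clean. The reduction $rad_f(P/Z)=R/Z$ and the existence of $Z\le Z(P)\cap R$ are also fine. The problem is that all of the actual content of the theorem has been pushed into Case 3, and there it is not proved. Your trace identity $|\Irr_r(G\mid\bar Z)|=r(G/\bar Z)-a$ is correct, but it is circular as a reduction: combining it with $|\Irr_r(G)|=r(G/\bar Z)+|\Irr_r(G\mid\bar Z)|=2r(G/\bar Z)-a$ odd shows that $a$ is automatically odd, so the ``parity identity'' $r(G/\bar Z)\equiv a\pmod 2$ you propose to prove is literally equivalent to the conclusion $r(G/\bar Z)$ odd, i.e.\ to $|\Irr_r(P/Z)|$ odd. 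Nothing has been gained. Worse, the statement you are left with is \emph{false} for finite groups under exactly the hypotheses you have retained: take $G=Q_8$ with $\bar Z=Z(G)=\Phi(G)$; then $r(G)=5$ is odd, $Z(G)\le\Phi(G)$, yet $r(G/\bar Z)=4$ and $|\Irr_r(G\mid\bar Z)|=1$ is odd. So no argument carried out purely inside a single finite quotient $G$ can close Case 3; the infiniteness of $P$ and, specifically, the confinement of real elements of $P$ and of $P/Z$ to $rad_f(P)$ and $rad_f(P/Z)$ (Proposition \ref{Rfinite}(2) together with Lemma \ref{realprofinite}(2)) must be injected, and your sketch never does this.

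The proposed mechanism for injecting it --- Lemma \ref{descent} applied to ``$hgh^{-1}\equiv g^{-1}\bar z\pmod N$'' --- is also doubtful: that lemma requires an automorphism $\phi$ of order $2$ fixing $N$ and the clean relation $\phi(n)N_s=n^{-1}N_s$ inside $N$, whereas your condition involves an arbitrary conjugator $h$ and an extra central twist by $\bar z$, and the elements $g$ in question need not lie in $N$. For comparison, the paper's proof works entirely with conjugacy classes of large quotients $P/N_t$: it shows that all real classes of $P/N_t$ lie in $RN/N_t$, picks $x\in R$ with $k_P(Cl_P(x)N/N_t)$ odd (possible because $k(P/N_t)$ is odd), identifies this count with $k_{C_P(x)}(N/N_t)$, and uses Corollary \ref{subgroupprofinite} to force $P=C_P(x)K$. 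That localization of real (and, in your language, weakly real) classes near $RN$ is exactly the ingredient your Case 3 is missing, and until it is supplied your argument does not prove the theorem.
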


\begin{proof} Let $N$ be a normal open uniform subgroup of $P$ such that $\Irr_r(P)\subseteq \Irr(P/N)$ (by  Lemma \ref{propertiesRfinite}). Suppose that $K=rad_f(P)$, and write $\bar A=AK/K$ for each $A\subseteq P$
(for simplicity, we denote the images of $N$ and  $N_t$ in $\bar P$ just by $N$ and $N_t$ respectively).
Also, for each $x\in K$, let $T_x=C_P(x)$. Since $N$ and $K$ commute, we have that $N\le T_x$.  
By Lemma \ref{propertiesRfinite}(3), $\bar P$, and so $\bar T_x$, is torsion free.  By Corollary \ref{subgroupRfinite}, $\bar T_x$  is also $\R$-finite, and thus $\bar T_x$ does not
contain non-trivial real elements, by Proposition \ref{Rfinite}(2). 
Again by Proposition \ref{Rfinite}(2) and by Lemma \ref{realprofinite}(2), there exists $k_x\in \N$ 
such that for every $t\ge k_x$,
 the real elements of $P/N_t$ are in $NK/N_t$, and the real elements of $ \bar T_x/   N_t$
lie in $N/  N_t$.

Let $t\geq\max_{x\in K} k_x$. Since $k(P/N_t)$ is odd, there exists $x\in K$ such that $k_P(Cl_P(x)N/N_t)$ is odd. Put $T=T_x$. Hence
$$k_{T}(N/N_t)=k_{T}(xN/N_t)=k_P(Cl_P(x)N/N_t)\, ,$$ since that map sending each $T$-conjugacy class in 
$xN/N_t$ into the $P$-conjugacy class in $Cl_P(x)N/N_t$ which contains it is a bijection, because
$N\cap K=1$. In particular $k_{T}(N/N_t)$
is odd.   Now inversion defines an action on $N/N_t$ of order at most $2$, so $k_T(N/N_t)\equiv r_T(N/N_t)\pmod 2$. 
Thus $\bar T /{N_t}$ has an odd number of real irreducible characters, by the choice of $t$. 
By enlarging $t$ if necessary, we can assume
that $\Irr_r(\bar T)\subseteq \Irr_r(\bar T/ {N_t})$, so $\bar T$ has an odd number of real irreducible characters. Since there are no real elements in $\bar P\setminus \bar T$, Corollary \ref{subgroupprofinite} implies that $\bar P=\bar T$.
\end{proof}

\begin{Corollary} \label{jiq}Let   $P$ be an infinite pro-2 group with odd number of real irreducible characters.   Then any  just infinite quotient of  $P$ has  an odd number of real characters.
\end{Corollary}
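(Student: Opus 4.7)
My plan is to reduce, via the preceding Theorem, to the case $rad_f(P)=1$, and then adapt the proof of that Theorem to the just-infinite quotient $Q$.

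Let $Q=P/K$ be a just infinite quotient of $P$. First, by Lemma~\ref{realprofinite}(3), $|\Irr_r(Q)|\le|\Irr_r(P)|$ is finite, so $Q$ is $\R$-finite. Since $Q$ is just infinite and infinite, any non-trivial closed normal subgroup of $Q$ is open; in particular the finite normal subgroup $rad_f(Q)$ must be trivial (otherwise $Q/rad_f(Q)$ would be a proper infinite quotient of $Q$). Consequently the image of $rad_f(P)$ in $Q$ is trivial, so $rad_f(P)\subseteq K$, and $Q$ remains a just infinite quotient of $\bar P:=P/rad_f(P)$. The preceding Theorem ensures that $\bar P$ is an infinite pro-$2$ group with an odd number of real characters, so I may replace $P$ by $\bar P$ and assume throughout that $rad_f(P)=1$.

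At this point one would like to apply the Theorem directly to $Q$: since $Q$ is infinite pro-$2$ with $rad_f(Q)=1$, its conclusion $Q=Q/rad_f(Q)$ has odd $|\Irr_r|$ is exactly the goal. The catch is that the Theorem's hypothesis of oddness is also the goal, so direct invocation is circular. To break the circle I would reopen the proof of the Theorem for $Q$: take a normal open uniform subgroup $N\unlhd Q$ with $\Irr_r(Q)\subseteq\Irr(Q/N)$ (Lemma~\ref{propertiesRfinite}(2)), set $N_t=N^{2^t}$, and let $\tilde N_t\unlhd P$ be its preimage (an open subgroup containing $K$). Then $|\Irr_r(Q)|=|\Irr_r(Q/N_t)|=r(Q/N_t)\equiv k(Q/N_t)=k(P/\tilde N_t)\pmod 2$, by Lemma~\ref{Brauer} and Lemma~\ref{congr2}. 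The task thus reduces to showing that $k(P/\tilde N_t)$ is odd for some large $t$.

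The hypothesis on $P$ yields that $k(P/N')$ is odd whenever $N'\unlhd_o P$ lies inside $M_P:=\bigcap_{\chi\in\Irr_r(P)}\ker\chi$, an open normal subgroup of $P$. The \emph{main obstacle} is thus to bridge these two facts. If $K\subseteq M_P$, then I can choose $N$ so that $\tilde N_t\subseteq M_P$ for $t$ large, and the conclusion is immediate (indeed in this easy case one even obtains $|\Irr_r(Q)|=|\Irr_r(P)|$). If $K\not\subseteq M_P$, then just-infiniteness of $Q$ forces the open subgroup $M_PK$ to equal $P$ or to have $M_PK/K$ be a proper open normal subgroup of $Q$; in either sub-case I would transfer the parity through the finite-index subgroup $M_P$, which is itself $\R$-finite by Corollary~\ref{subgroupRfinite}, and apply Corollary~\ref{subgroupprofinite} to an appropriately chosen index-$2$ pair inside a suitable finite quotient. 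This parity-transfer step in the case $K\not\subseteq M_P$ is the chief difficulty I expect.
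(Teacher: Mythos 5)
Your proof is incomplete: the case $K\not\subseteq M_P$ is exactly where the content of the statement lies, and you leave it as "the chief difficulty I expect" with only a vague plan of "transferring the parity" via Corollary~\ref{subgroupprofinite}. There is no reason for $K$ to be contained in $M_P$ (the real characters of $P$ need not vanish on $K$; $M_P$ is merely open while $K$ has infinite index), so this is not a degenerate case, and nothing in your sketch identifies the index-$2$ pair or the finite quotient to which the parity-transfer would be applied. As you yourself observe, invoking the preceding Theorem directly on $Q$ is circular, and "reopening its proof for $Q$" does not work because that proof consumes the oddness hypothesis on the ambient group.

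The step you are missing is the paper's one-line trick: do not compare $Q$ with $P$ directly, but interpose the quotient $P/(N\cap K)$, where $N$ is a normal open uniform subgroup of $P$ with $\Irr_r(P)\subseteq\Irr(P/N)$ (Lemma~\ref{propertiesRfinite}(2)). Since $N\cap K\le N$, every real character of $P$ factors through $P/(N\cap K)$, so $\Irr_r(P/(N\cap K))=\Irr_r(P)$ has odd cardinality. Moreover $K/(N\cap K)\cong NK/N$ is finite, and since $Q=P/K$ is just infinite it has trivial finite radical, whence $rad_f\bigl(P/(N\cap K)\bigr)=K/(N\cap K)$. Now the preceding Theorem applied to the infinite group $P/(N\cap K)$ says precisely that $\bigl(P/(N\cap K)\bigr)/rad_f\bigl(P/(N\cap K)\bigr)\cong P/K=Q$ has an odd number of real irreducible characters. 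This sidesteps the $M_P$ dichotomy entirely; your opening reduction to $rad_f(P)=1$ is correct but unnecessary once this intermediate quotient is used.
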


\begin{proof}
 Let $N$ be a normal open uniform subgroup of $P$ such that $\Irr_r(P)\subseteq \Irr_r(P/N)$ (Lemma \ref{propertiesRfinite}). Let $M$ be a closed normal subgroup of $P$ such that $P/M$ is just infinite.
Then
$P/(N\cap M)$ has an odd number of real irreducible characters. 
Note that $M/N\cap M = rad_f(P/N\cap M)$, so $P/M$ has also an odd number
of real irreducible characters by the above result. 
\end{proof}

\begin{Corollary}\label{nsol}
Let $P$ be an infinite  pro-2 group with odd  number of real irreducible characters. 
Then  $P$ is not solvable.
\end{Corollary}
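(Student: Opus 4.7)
I would argue by contradiction, so suppose $P$ is solvable. By Corollary \ref{jiq}, any just infinite quotient of $P$ still has an odd number of real irreducible characters, and such a quotient is automatically solvable; so I may replace $P$ by such a quotient and assume $P$ itself is just infinite and (topologically) solvable. In particular $P$ is $\R$-finite, and hence has finite rank by Lemma \ref{propertiesRfinite}(1). Since $P$ is infinite and just infinite, any non-trivial closed normal subgroup has finite index, so the maximal finite normal subgroup $rad_f(P)$ is trivial, and Lemma \ref{propertiesRfinite}(3) then forces $P$ to be torsion-free.

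Combined with just infiniteness, solvability produces a closed abelian normal subgroup $N \unlhd P$ of finite index: every non-trivial term of the (topological) derived series of $P$ is a non-trivial closed normal subgroup, hence open; take $N$ to be the last non-trivial term, which is abelian by construction. As a closed torsion-free abelian pro-$2$ group of finite rank, $N \cong \Z_2^d$ for some $d\ge 1$, and in particular $N$ is uniform, so $\LL(P)=N\otimes_{\Z_2}\Q_2\cong \Q_2^d$, with the adjoint action of $P$ on $\LL(P)$ being just the $\Q_2$-linear extension of the conjugation action of $P$ on $N$.

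I would then split into cases according to $C=C_P(N)$, a closed normal subgroup of $P$ containing $N$. If $C=P$, then $N\le Z(P)$ has finite index in $P$, so by Schur's theorem \cite[Theorem 10.1.4]{Rob} the derived subgroup $[P,P]$ is finite; since $P$ is torsion-free this gives $[P,P]=1$, so $P$ is abelian. The only just infinite torsion-free abelian pro-$2$ group of finite rank is $\Z_2$, which has exactly two real irreducible characters, contradicting the oddness hypothesis. Otherwise $P/C$ is a non-trivial finite $2$-group embedded in $\Aut(N)=\GL_d(\Z_2)$, so its centre contains an element of order $2$; lifting such an element to $g\in P$, the automorphism $ad(g)$ of $\LL(P)$ has order exactly $2$ and is non-trivial, hence is diagonalisable over $\Q_2$ with eigenvalues in $\{\pm 1\}$, at least one of which must equal $-1$. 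This directly contradicts the characterisation of $\R$-finiteness in Proposition \ref{Rfinite}(5), completing the proof.

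The main obstacle is really the organisational one of extracting an abelian normal subgroup of finite index from a just infinite solvable pro-$2$ group and then identifying the conjugation action on it with the adjoint action on $\LL(P)$; once this is in place the non-central case is essentially automatic, because any non-trivial order-$2$ linear action on a $\Q_2$-vector space must have $-1$ as an eigenvalue. The only point requiring a little additional care is the degenerate central case, where the $ad(g)$-eigenvalue argument has nothing to bite on and one must instead argue via Schur's theorem that $P$ is abelian and count real characters of $\Z_2$ by hand.
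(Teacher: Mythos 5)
Your argument is correct, and its skeleton coincides with the paper's: pass to a just infinite quotient via Corollary \ref{jiq}, note that $rad_f$ of that quotient is trivial so it is torsion-free by Lemma \ref{propertiesRfinite}(3), show the quotient must be $\Z_2$, and contradict oddness because $\Z_2$ has exactly two real irreducible characters. The difference is in how the middle step is handled. The paper simply asserts, as a known classification fact, that $\Z_2$ is the only solvable torsion-free just infinite pro-$2$ group. You instead prove the weaker statement that suffices here --- a solvable, torsion-free, just infinite, $\R$-finite pro-$2$ group is $\Z_2$ --- from scratch: you extract an open abelian normal subgroup $N\cong\Z_2^d$ as the last non-trivial term of the derived series, split on $C_P(N)$, dispose of the central case with Schur's theorem, and kill the non-central case by producing an involution of $P/C_P(N)$ whose adjoint action on $\LL(P)$ is a non-identity map of order $2$, hence has $-1$ as an eigenvalue, contradicting Proposition \ref{Rfinite}(5). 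This buys self-containedness at the price of invoking $\R$-finiteness in one extra place. It is worth noting that your non-central case can also be closed without $\R$-finiteness: just-infiniteness forces $N\otimes_{\Z_2}\Q_2$ to be an irreducible module for the finite $2$-group $P/C_P(N)$ (any non-zero submodule meets $N$ in a non-trivial closed normal subgroup of $P$, which is then open in $N$), so a central involution acts on it as a scalar $\pm 1$ by Schur's lemma; the value $+1$ contradicts faithfulness, while $-1$ forces $g^2$, which is fixed by $ad(g)$, to be trivial, contradicting torsion-freeness. That variant recovers in full the classification fact the paper cites without proof.
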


\begin{proof} Let $\bar P$ be a just infinite quotient of $P$. Since $\bar P$ is  just infinite, $rad_f( \bar P )=\{1\}$. Now, using that $\bar P$ is $\R$-finite, we obtain that $\bar P$  is   torsion-free. Thus, if $\bar P$ is solvable, it is isomorphic to $\Z_2$ (the only  solvable   torsion-free just infinite pro-2 group). However, $\Z_2$ has only 2 real  irreducible   characters. This is a  contradiction, because $\bar P$ should have an odd number of  real  irreducible  characters by the previous corollary.
\end{proof}
 
\section{Torsion free pro-2 groups with odd number of real irreducible characters}
In this section we show that a  torsion free pro-$2$  group   $P$   with  an   odd number of  real irreducible characters  and such that $P$  can be embedded non-trivially as a subgroup of index 2 in another pro-2 group, contains a proper   pro-2  subgroup with   an odd number of real irreducible characters.  Furthermore, the proper pro-$2$ subgroup of $P$ that we obtain has at most as many real irreducible characters as $P$. This is the main step in the proof of Theorem A  and it will be used in inductive arguments. 

\begin{Theorem} \label{embedding}Let $P$ be a torsion free  subgroup of a pro-2 group $Q$ of index 2. Assume that $P$ has an odd number of real irreducible characters. Then  there exists an element $x\in Q\setminus P$ such that $C_P(x)$  has an odd number of  real irreducible characters and $|\Irr_r(C_P(x))|\le |\Irr_r(P)|$.  
\end{Theorem}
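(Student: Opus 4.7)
The plan has three stages: first produce a real element $x \in Q\setminus P$, show it can be chosen as an involution, and finally refine this choice so that the required parity and bound hold.

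\emph{Stage 1 (existence of a real involution).} Since $|\Irr_r(P)|=k$ is finite, $P$ is $\R$-finite and has finite rank by Lemma~\ref{propertiesRfinite}(1), hence so does $Q$. Pick an open normal uniform subgroup $N$ of $Q$ contained in $P$ with $\Irr_r(P)\subseteq \Irr(P/N)$ (via Lemma~\ref{propertiesRfinite}(2) and passing to the $Q$-core of a uniform subgroup). For every open normal $M\lhd Q$ with $M\leq N$ and small enough that $r(P/M)=k$, the finite quotient $\bar Q=Q/M$, $\bar P=P/M$ satisfies $r(\bar P)=k$, which is odd. Lemma~\ref{congr2} combined with inversion-pairing of non-real classes forces $r_{\bar Q}(\bar Q\setminus\bar P)$ to be odd, so $\bar Q\setminus\bar P$ contains real classes. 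The compactness / inverse limit argument of Lemma~\ref{realprofinite}(1)(2) applied to $\mathcal S=Q\setminus P$ then produces a real element $x\in Q\setminus P$. To see $x^2=1$: if $y\in Q$ inverts $x$, then writing $y\in P$ directly, or $y=zx$ with $z\in P$ otherwise, a short computation shows some element of $P$ inverts $x$; hence $x^2\in P$ is $P$-real, and by Proposition~\ref{Rfinite} together with $P$ being torsion-free and $\R$-finite, $x^2=1$.

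\emph{Stage 2 (making $|\Irr_r(C_P(x))|$ odd).} Since $x$ is an involution, $C_Q(x)=C_P(x)\times\langle x\rangle$, giving $|\Irr_r(C_Q(x))|=2\,|\Irr_r(C_P(x))|$; equivalently, in each finite quotient $\bar Q_s=Q/N_s$, $r(C_{\bar Q_s}(\bar x))=2\,|\Irr_r(C_{\bar P_s}(\bar x))|$ where $\bar x=xN_s$. To pin down an $x$ for which this latter quantity is $\equiv 2\pmod 4$ (equivalently, $|\Irr_r(C_P(x))|$ odd), Lemma~\ref{descent} is applied iteratively down the filtration $\{N_s\}$: any $n\in N_t$ approximately inverted by $x$ modulo $N_s$ can be rewritten, modulo $N_{s-1}$, as $\phi(m^{-1})m$ for some $m\in N_{t-1}$, where $\phi=\operatorname{ad}(x)$. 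This allows modification of $x$ within its profinite real class, preserving the involution property, so as to control the parity of $r(C_{\bar P_s}(\bar x))$ at each descent step. The oddness of $r_{\bar Q_s}(\bar Q_s\setminus\bar P_s)$, inherited from $k$ being odd, then supplies the parity input forcing the final $|\Irr_r(C_P(x))|$ to be odd.

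\emph{Stage 3 (the bound).} The inequality $|\Irr_r(C_P(x))|\leq |\Irr_r(P)|$ follows by comparing real characters of $C_P(x)$ with those of $P$ through the overgroup $Q$. Using Corollary~\ref{subgroupRfinite} ($C_P(x)$ is $\R$-finite), every real character of $C_P(x)$ pairs via Gallagher with a real character of $C_Q(x)=C_P(x)\times\langle x\rangle$; the latter relates by Clifford/restriction to characters of $Q$ and hence to $\Irr_r(P)$, producing in the profinite limit an injection of $\Irr_r(C_P(x))$ into $\Irr_r(P)$.

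\emph{Main obstacle.} The substantive content lies in Stage~2: tracking the parity of $r(C_{\bar P_s}(\bar x))$ modulo~$4$ as $s\to\infty$ and ensuring that the modifications of $x$ permitted by Lemma~\ref{descent} ultimately produce an $x$ with the required odd parity. This delicate parity bookkeeping---not the mere production of a real element in $Q\setminus P$, which is immediate from the counting---is the technical heart of the proof.
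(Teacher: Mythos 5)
Your Stage 1 reproduces the paper's opening step correctly: Corollary \ref{subgroupprofinite} (or the compactness argument you sketch) yields a real element $a\in Q\setminus P$, and $a^2$ being $P$-real, of finite order by Proposition \ref{Rfinite}, and lying in the torsion-free group $P$ forces $a^2=1$. After that, however, the proposal has a genuine gap precisely where the theorem's content lies. The paper's proof rests on the identity $|\Irr_r(C_P(a))|=r_Q(aN_2/N_k)$ for every involution $a\in Q\setminus P$ and $k$ large, established through an explicit chain of reductions: one first uses Lemma \ref{ccs} to split $r_Q(aP/N_k)$ into a finite sum $\sum_i r_Q(a_iN_2/N_k)$ over representatives of the $Q$-classes of involutions (shown via Lemma \ref{descent} to remain separated modulo $N_2$), so that oddness of $r_Q(aP/N_k)\equiv r(P/N_k)$ singles out one class with $r_Q(a_iN_2/N_k)$ odd; one then passes from $Q$-classes in $aN_2/N_k$ to $C_P(a)N_{r+2}$-classes in $aAB/N_k$ and finally to $C_P(a)$-classes in $AB/B$, where $A=C_{N_{k-r-2}}(a)$ and $B/N_k=\{z^az^{-1}N_k: z\in N_{r+2}\}\cap N_{k-r-2}/N_k$ is an auxiliary abelian subgroup. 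Your Stage 2 replaces all of this with the assertion that Lemma \ref{descent} lets you "modify $x$ within its profinite real class \ldots so as to control the parity of $r(C_{\bar P_s}(\bar x))$"; this is not an argument. There is no mechanism given that connects the odd count $r_{\bar Q_s}(\bar Q_s\setminus \bar P_s)$ to the real character count of the centralizer of any particular involution, and that bridge (the subgroups $A$, $B$ and Claims 4--8 of the paper) is exactly what has to be built. The observation $C_Q(x)=C_P(x)\times\langle x\rangle$ and the mod-$4$ reformulation do not advance this.

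Stage 3 is also not salvageable as stated. There is no Clifford or Gallagher correspondence between characters of the centralizer $C_Q(x)$ and characters of $Q$: a centralizer is not a normal subgroup, and restriction/induction does not produce an injection $\Irr_r(C_P(x))\hookrightarrow\Irr_r(P)$ in general. In the paper the bound is a free by-product of the same identity: $|\Irr_r(C_P(a))|=r_Q(aN_2/N_k)\le r_Q(aP/N_k)$, and the latter is at most $|\Irr_r(P/N_k)|=|\Irr_r(P)|$ by Proposition \ref{bound}, which is a genuinely character-theoretic statement about an index-$2$ subgroup, not about a centralizer. So both the parity claim and the bound ultimately require the identity you have left unproved; as written, the proposal establishes only the existence of an involution in $Q\setminus P$.
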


\begin{proof} 
Since $P$ has an odd number of real irreducible
characters and $P$ has index $2$ in $Q$, by Corollary 3.2 there
exists a real element $a$ of $Q$ in $Q\setminus P$. Then $a^2$ lies in
$P$ and it is real in $P$, because $a^2$ is real in $Q$ and it is centralized by $a$. 
Since $P$ is $\R$-finite, Proposition \ref{Rfinite} implies that $a^2$ has finite order, and thus
$a$ has order $2$ because $P$ is torsion free.
In particular, note that every real element
of $Q$ contained in $aP$ is an involution

Observe that 
by Lemma \ref{propertiesRfinite}(1), $P$ has finite rank. Let $N$ be a normal uniform open subgroup of $Q$ contained in $P$.  
Let $S$ be the set of elements of order $2$ in $Q$.
By Lemma \ref {realprofinite}(2) applied to the normal closed subset $\mathcal S=aP$ of $Q$, 
we have that for large $k$ all the real elements of $aP/N_k$ are in $SN_2/N_k$.
Now, by Lemma \ref{ccs}, 
there are only finitely many $Q$-conjugacy classes   in $S$, so let
$\{a_1, \ldots, a_s\}$ be a set of representatives of these conjugacy classes. 

We claim that  
$\{a_1N_2, \ldots, a_sN_2 \}$
lie in different conjugacy classes in $Q/N_2$, and thus the $Q$-orbits of the subsets
$a_1N_2/N_k, \ldots, a_sN_2/N_k $ in $Q/N_k$ are mutually disjoint.  Indeed, 
note that if $(a_iN_2)^g=a_jN_2$ for some $  g\in Q$ and $1\leq i,j\leq s$, then
$n=a_ja_i^g$ lies in $N_2$. Hence $$n^{a_j}n=(a_jn)^2=(a_i^g)^2=1.$$
By Lemma \ref{descent}, for each $s>2$ 
there  exists $m\in N_1$ such that  $nN_{s-1}=(m^{-1})^{a_j}mN_{s-1}$. Hence $$a_i^gN_{s-1}=a_jnN_{s-1}=a_j^mN_{s-1}\, ,$$ and we deduce that $a_j$ is conjugate to $a_i$ in $Q$ because the subgroups $N_s$ constitute 
a base of neighbourhoods of the identity. Thus $i=j$, as wanted. 

By the previous paragraph, we have
\begin{equation} \label{real}
r_{Q}(aP/N_k)=\sum_{i=1}^s r_{Q}(a_iN_2/N_{k}),
\end{equation} for $k$ large enough.  
Since $P$ has index $2$ in $Q$, by  Lemma \ref{congr2} $r_Q(aP/N_k)$ is congruent to $r(P/N_k)$ modulo $2$, for $k\in\N$.
If we take $k$ large enough, then $r(P/N_k)=|\Irr_r(P)|$  by Lemma \ref{propertiesRfinite}(2), so there exists $i$ such that $r_Q(a_iN_2/N_k)$
is odd.

Next we work to prove the main step in the proof, namely that for any element $a\in Q\setminus P$ of order $2$,
\begin{equation}\label{mainid}
|\Irr_r(C_p(a))|=r_Q(aN_2/N_k)\, ,
\end{equation} if $k$ is large enough. Of course, this will complete the proof of the first part of the theorem.   In order to
prove the second part of the result, it is enough to show that
$|\Irr_r (C_P(a))|\leq |\Irr_r (P)|$ for all such $a$. It is clear that (\ref{mainid}) implies that
$|\Irr_r(C_p(a))|\le r_{Q}(aP/N_k)$, and thus 
  if $k$ is 
large enough, then
it follows from Proposition \ref{bound} that $|\Irr_r(C_p(a))|\le |\Irr_r(P)|$. 
Therefore proving (\ref{mainid}) will complete the proof of the theorem.

We fix an element $a\in Q\setminus P$ of order 2 for the rest of the proof. 
By Corollary \ref{subgroupRfinite}, $C_P(a)$ is $\R$-finite.  Let 

$$r=\max\{|\Irr_r(P)|, |\Irr_r(C_P(a))|\}\, .$$ 

The following follows easily from the choice of $r$:

\begin{Claim}\label{realcentralizer}  For $k\geq r $ large enough, suppose that 
$M$ is an open normal
subgroup of $P$ (respectively of $C_P(a)$) contained in $N_k$ (resp. in $C_{N_{k}}(a)$). Then all real
elements of $P/M$ (resp. of $C_P(a)/M$) are contained in $N_{k-r+1}/M$ (resp. in $C_{N_{k-r+1}}(a)/M$).
\end{Claim}

\begin{proof}
By Lemma \ref{Brauer}, $r(P/M)\leq r$.
Since $P$ is $\mathbb R$-finite and
torsion-free, $P$ has no non-trivial real elements
by Proposition \ref{Rfinite}. Then by Lemma \ref{realprofinite} there exists
an open normal subgroup $L$ of $P$   contained in $N$,  such that all real elements
of $P/L$ lie in $N/L$.
Now let $k$ be large enough so that $N_k\leq L$. Suppose that $M\leq N_k$, 
so the real elements of $P/M$ lie in $N/M$. Then  
if $yM$ is a real element of $P/M$ not contained in 
$N_{k-r+1}/M$, the powers of $yM$ provide representatives 
of at least $r+1$ distinct real classes in $P/M$, which is a contradiction.
The same argument works for $C_P(a)$. 
\end{proof}

\begin{Claim}\label{down}Let $k\geq r+2$  be large enough. Then
$r_Q(a N_2/N_k)=r_P(aN_{ k-r}/N_k)\, .$
\end{Claim}

\begin{proof}
Since $P$ has index $2$ in $Q$, it is clear that $Q$-conjugate elements in $(Q\setminus P)N_k/N_k$ are
$P$-conjugate. Thus
$$ 
r_Q(aN_2/N_k)=r_P(aN_2/N_k). 
$$

Let  $y=an\in aN_2$ be such that $yN_k$ is a $P$-real element of $Q/N_k$.  
Then $y^2N_k=n^anN_k$ is real in $P/N_k$. Hence $n^{a}n\in N_{k-r+1}$, by Claim \ref{realcentralizer}.  
By Lemma \ref{descent} ($\phi$ is the conjugation by $a$), there exists $m\in N/N_k$ such that $anN_{k-r}=a^mN_{k-r}$, 
so any $P$-conjugacy class of an element $yN_k\in aN_2/N_k$ which is inverted by $P$ intersects
with $aN_{k-r}$. Therefore: 
$$ 
r_P(aN_2/N_k)=r_P(aN_{k-r}/N_k)\, ,
$$ as wanted.
\end{proof}

Observe that it is clear from the previous claim that if $N_{k-r}\leq H\leq N_2$, then 
\begin{equation}\label{eqsubg}
r_Q(a N_2/N_k)=r_P(aH/N_k)\, .
\end{equation}

\begin{Claim}\label{largel}
 Let $k$ be sufficiently large. Then the following holds: 
\begin{enumerate}
\item The real elements of $C_P(a)/C_{N_{k-1}}(a)$ are in $C_{N}(a)/C_{N_{k-1}}(a)$; \label{centclass}
\item $\Irr_r(C_{P}(a))\subseteq \Irr(C_{P}(a)/C_{N_{k-1}}(a))$;	\label{centch}
\item $C_{P}(aN_{k})\le C_{P}(a)N_{r+2}$;
\item   $  C_{N_k}(aN_{2k})\le C_{N_k}(a)N_{k+r+2}.$
\end{enumerate}
\end{Claim}

\begin{proof}
Since $P$ is $\R$-finite and torsion free, $P$ has no non-trivial real elements. Thus Lemma \ref{realprofinite}(2) (with $P$, $\mathcal S$ and $N$ replaced by $C_P(a)$, $C_P(a)$ and $C_N(a)$, respectively)   implies (\ref{centclass}). Also, Lemma \ref{propertiesRfinite}(2), implies (\ref{centch}).

Now we prove the third statement. 
If there are infinitely many $k\geq r+2$ such that $C_P(aN_k)\not \le C_P(a)N_{r+2}$, then for each such $k$ we can find an element $x_k\in C_P(aN_k)\setminus  C_P(a)N_{r+2}$. Since $P$ is compact, some subsequence $\{x_{k_i}\}$ of $\{x_k\}$ has a limit $x$ that belongs to $\cap_{i=1}^\infty C_P(aN_{k_i})=C_P(a)$. However  $P\setminus C_P(a)N_{r+2}$ is closed, whence $x\not \in C_p(a)\le C_P(a)N_{r+2}$, a contradiction. Thus, there exist only finitely many $k$ such that $C_P(aN_k)\not \le C_P(a)N_{r+2}$. This gives the third statement. 

The last statement can be proved similarly, once one notes that  it is equivalent to the following claim:
$$C_{\mathbf{log} (N)}(a+2^k\mathbf{log} (N))\le C_{\mathbf{log} (N)}(a)+2^{r+2}\mathbf{log} (N).$$
\end{proof}

\begin{Claim}\label{cl:rpq}
Let $k$  be large enough 
and suppose that $N_{k-r-1}\leq H\leq N_{k-r-2}$. Then:
$$r_Q(a N_2/N_k)=r_{C_P(a)N_{r+2}}(aH/N_k)\, .$$
\end{Claim}
 
\begin{proof}
Note that if $ah_1N_k$, $ah_2N_k\in aH/N_k$ are conjugate via $g\in P$, then $(aN_{k-r-2})^g=aN_{k-r-2}$, so
$g\in C_{P}(aN_{k-r-2})$ and thus 

\begin{equation}\label{eq3}
r_P(aH/N_k)=r_{C_{P}(aN_{k-r-2})}(aH/N_k)\, .
\end{equation}
Since $C_P(aN_{k-r-2})\leq C_{P}(a)N_{r+2}$ by Claim \ref{largel}(3),
 we obtain from (\ref{eq3}) that: 
\begin{equation}\label{eq4}
r_{C_{P}(aN_{k-r-2})}(aH/N_k)=r_{C_P(a)N_{r+2}}(aH/N_k)\, .
\end{equation}
Now, if we put together (\ref{eqsubg}), (\ref{eq3}) and (\ref{eq4}), we obtain the claim.
\end{proof}

For $k$  large enough we define $$A=C_{N_{k-r-2}}(a)\, ,$$ and 
$$J=\{z^az^{-1}N_k\in Q/N_k\, :\, z\in N_{r+2} \}\cap N_{k-r-2}/N_k \,.$$

We observe that $J$ is an  abelian subgroup of $Q/N_k$. Indeed, if
${z_i}^{a}{z_i}^{-1}N_k\in J$ with $z_i\in N_{r+2}$ and $i=1,2$, then 
$$z_1^az_1^{-1} z_2^az_2^{-1}N_k=z_1^a z_2^az_2^{-1}z_1^{-1} N_k=(z_1z_2)^a(z_1z_2)^{-1}N_k\, ,$$because 
$N_{k-r-2}/N_k\leq Z(N_{r+2}/N_k)$. So let 
$N_k\leq B$ be the   subgroup of $Q$ such that $B/N_k=J$. Since $C_P(a)$ normalizes $B$, we have that 
$AB$ is a subgroup of $ N_{k-r-2}$, and also $C_P(a)$ acts on the set of
left cosets of $B$ in $AB$.

\begin{Claim}\label{ab} Let $k$  be large enough. Then
$N_{k-r-1}$ is contained in $AB$.
\end{Claim}
\begin{proof} 
Let  $y \in N_{k-r-2}$. Then
$
y^2 = (yy^a)((y^{-1})^{a}y)
$. Note that 
$$yy^a\in C_{N_{k-r-2}}(aN_{2k-2r-4})\, ,$$ and thus $yy^a\in AN_k$ by Claim \ref{largel}(4). Thus $y^2$  lies in $AB$.  Therefore, $N_{k-r-1}/N_k=(N_{k-r-2})^2/N_k\leq AB/N_k$. 
\end{proof}

Note that in particular we have that 

$$
N_{k-r-1}\leq AB\leq N_{k-r-2}.
$$ Thus, by Claim \ref{cl:rpq} if $k$  is large enough then 

\begin{equation}\label{eq5}
r_Q(aN_2/N_k)=r_{C_{P}(a)N_{r+2}}(aAB/N_k).
\end{equation}
 
The aim of introducing the subgroups $A$ and $B$ is to compare the action
by conjugation of $Q$ with that of $C_P(a)$. We have the following: 

\begin{Claim}\label{corresp}
Let $h_1, h_2\in AB$. Then there exists $g\in C_P(a)N_{r+2}$ such that $(ah_1)^gN_k=ah_2N_k$ 
if and only if there exists $c\in C_P(a)$ such that $(h_1)^{c}B= h_2B$. 
\end{Claim}

\begin{proof} Firstly   assume  that $$( h_1)^cB= h_2B\textrm{\ for some  \ } c\in C_P(a).$$ Hence there exists $b\in B$ such that $(h_1)^c= h_2b$. Let $ n\in N_{r+2}$ be such that $ a^nN_k=ab^{-1}N_k$. Hence
$$(ah_1)^{cn}N_k=(a(h_1)^c)^nN_k=a^nh_2bN_k=ah_2N_k\, ,$$where we are using that
$N_{k-r-2}/N_k\leq Z(N_{r+2}/N_k).$
 
Secondly, we assume that  $$(ah_1)^gN_k=ah_2N_k\textrm{\ for some \ } g\in C_P(a)N_{r+2}.$$
Write 
$g=cm$, where $c\in C_P(a)$ and $m\in N_{r+2}$. Hence  $$ah_2N_k=(ah_1N_k)^g=a^mh_1^cN_k,$$ where again the last equality holds because $N_{k-r-2}/N_k$ is centralized by $N_{r+2}$. Thus 
$${aa^mN_k\subseteq B}\, ,$$ and so
$$h_2\equiv h_1^c\pmod {B}. $$  \end{proof}

\begin{Claim}\label{cl:centr} Let $k$  be large enough. Then
$r_{C_P(a)N_{r+2}}(aAB/N_k)=r_{C_P(a)}(AB/B).$
\end{Claim}

\begin{proof} 
Let $n\in AB$. Since $(an)^{-1}=n^{-1}a=an^{-1}[n^{-1},a]$ and $[n^{-1},a]\in B$, by Claim \ref{corresp} we have that $anN_{k}$ is real in $C_P(a)N_{r+2}/N_k$ if and only if there exists $c\in C_P(a)$ such that $n^c\equiv n^{-1} \pmod{B}.$ Thus,
$$r_{C_P(a)N_{r+2}}(aAB/N_k)=r_{C_P(a)}(AB/B).$$ 
\end{proof}

In order to finish the proof, we need one more observation:

\begin{Claim} \label{passtocentr}
Let $k$ be large enough. Then

$$r_{C_P(a)}(AB/B)=|\Irr_r(C_p(a))|\, .$$
\end{Claim}

\begin{proof} 
Observe  that  
$$(z^az^{-1})^a=z (z^{-1})^a= (z^az^{-1})^{-1}\, ,$$ for any $z\in N_{r+2}$.
In particular, $(A\cap B)N_k/N_k$ has exponent 2,
and so $M=A\cap B\subseteq C_{N_{k-1}}(a)$. 
By Claim \ref{realcentralizer}, all real elements of $C_P(a)/M$ are in 

$$C_{N_{k-r}}(a)/M \leq A/A\cap B\cong AB/B.$$ By  Claim \ref{largel}, we have that 
$\Irr_r(C_P(a)) =\Irr_r(C_P(a)/M)$ and the claim follows.
\end{proof}

Now, if we put together (\ref{eq5}), Claim \ref{cl:centr} and the previous claim 
we obtain that

$$
|\Irr_r(C_p(a))|=r_Q(aN_2/N_k)
$$
holds for any $k$   large enough, as wanted. The proof is complete.

\end{proof}

\section{Just infinite pro-2 groups with odd number of real  irreducible characters}
Observe that by Corollary \ref{jiq} and Corollary \ref{nsol}, in order to understand pro-$2$ groups with odd number of real irreducible characters one has to consider non-solvable just infinite pro-$2$ groups of finite rank. Also, 
Theorem  \ref{embedding} indicates that we have to look at ``minimal" examples of   such groups. As   it will follow from the results in this section,   these groups are Sylow pro-2 groups of $\Aut(\sli_1(D))$, where $D$ is some finite-dimensional division algebra over $\Q_2$.

Just infinite pro-$p$ groups of finite rank are well understood.   
We refer to the book \cite{KLP} 
for detailed information  on these groups,   including their classification.   Let $P$ be a non-solvable just infinite  pro-$p$ group of finite rank. Then it is known (see \cite[Proposition III.6]{KLP}) that $\LL(P)$ is a semi-simple Lie $\Q_p$-algebra,    and all summands appearing in the decomposition of $\LL(P)$ as a sum of simple Lie algebras are isomorphic.  Since $P$ is just infinite,   we have that   $\ker ad=\{1\}$. In fact, $ad(P)$ is an open subgroup in $\Aut(\LL(P))$   isomorphic to $P$.  By \cite[Lemma III.16]{KLP}, $\Aut(\LL(P))$ contains a Sylow pro-$p$ subgroup,  that is a maximal pro-$p$ subgroup, and the Sylow pro-$p$ subgroups of $\Aut(\LL(P))$ are all conjugate.

Let $\LL$ be a simple finite-dimensional $\Q_2$-algebra. Then  
the centroid $K$ of $\LL$ is defined as
$$\End_{\LL}(\LL)=\{\phi\in \End_{\Q_2}(\LL): \ [\phi(l),m]=\phi([l,m]) \textrm{\ for every $l,m \in \LL$}\}.$$ It can be shown that $K$ is in fact a finite  extension  field of $\Q_2$, and $\LL$ may be regarded in a natural way as a Lie $K$-algebra, denoted by a slight abuse of notation $\LL$ as well. The $K$-algebra $\LL$ is absolutely simple, i.e. for any field extension $F/K$ the Lie $F$-algebra $\LL \otimes_K F$ is simple (see Chapter X of \cite{Ja}). 

We shall describe a standard construction of an absolutely simple algebraic group 
$\G=\G_{\LL}$ defined over $K$ such that $\G(K)\cong \Aut_K(\LL)$ and moreover, if $E$ is an extension of $K$, then $\G(E)\cong \Aut_E(\LL \otimes_K E)$. This is done in the following way. Fix a basis $B=\{l_1,\ldots,l_n\}$ of $\LL$ over $K$ and define $a^k_{ij}\in K$ as follows
$$[l_i,l_j]_{ L}=\sum^{n}_{k=1} a^k_{ij}l_k.$$
Let  $\phi$ be an automorphism of the Lie $K$-algebra $\LL$  and let $X=(x_{ij})\in \GL_n(K)$ be such that $$\phi(l_i)=\sum_{j=1}^n x_{ij}l_j.$$
Then we obtain that \begin{equation}\label{defeq}\sum_{u=1}^n a_{ij}^ux_{uk}=\sum_{s,t=1}^n a^k_{st}x_{is}x_{jt}, \textrm{\ for all\ }1\le i,j,k\le n.\end{equation}
Then we define  $\G=\G_{\LL}$ to be the algebraic subvariety of $\mathbf{GL}_n$ defined by the equations (\ref{defeq}).  In fact, one can easily check that $\G$ is an algebraic subgroup of $\mathbf{GL}_n$ and $\G(E)\cong \Aut_E(\LL \otimes_K E )$. 

Denote by $\G^o$ the connected component of  the identity of $\G$. Note that  $\G^o$ is also defined over $K$ (see \cite[34.2]{Hu}). 
The algebraic group $\G^o$ is of adjoin type (see \cite[31.1]{Hu}). Denote  by $ \tilde {\G}^o$ the simply connected cover of $\G^o$ (see \cite[Theorem 2.6]{PR}). By \cite[Proposition 2.10]{PR}, there exists a universal covering $\pi :\tilde {\G}^o\to {\G}^o$ defined over  $K$. In particular $\tilde {\G}^o$ can be defined over $K$.

Since $\G$ is a $K$-group, its Lie algebra (see \cite[9.1]{Hu}) has a $K$-structure (see \cite[34.2]{Hu}). Arguing as in the proof of   \cite[Corollary 13.2 ]{Hu}, we obtain that the $K$-points of the Lie algebra of $\G$ is isomorphic to the algebra $\mathfrak{Der}_K(\LL)$ of $K$-derivations of $\LL$. Since $\LL$ is simple,  $\mathfrak{Der}_K(\LL)\simeq \LL$ (see \cite[14.1]{Hu}). Thus, the $K$-points of the Lie algebra of $\G$ are isomorphic to $\LL$.   In particular, $\G$ is absolutely simple.  

Now, we consider  a particular case of the previous situation.
If $K$ is a field, 
we say that a division algebra $D$ is  $K$-central if $Z(D)\cong K$ and $D$ has 
finite dimension over $K$.
Suppose that $K$ is a finite extension of $\Q_2$, 
and
let $D$ be a finite-dimensional $K$-central division algebra. Suppose that $E$ is a splitting field for $D$, so by definition there exists an isomorphism of $E$-algebras $\varphi: D\otimes_{K} E \simeq M_d(E)$, where $d$ is the index of $D$ (recall that $E$ can be taken to be any field containing a maximal subfield of $D$, see Theorem 7.15 of \cite{Rei} and the discussion before it). For each element $x\in D$ we denote its {\bf reduced trace} by way of 
$$\mbox{trd}(x)= \tr (\varphi(x\otimes_{K} 1_E))\, .$$
Let  $\sli_1(D)$ be the elements of $D$ of reduced trace 0. Then $\sli_1(D)$ is a simple Lie $\Q_2$-algebra with Lie bracket defined by $[x,y]_L=xy-yx$.   One can show that in fact the algebraic group $\G_{\sli_1(D)}$ is already connected. Let us describe its simply connected cover.

The {\bf reduced norm} is the map given by $\mbox{Nrd}(x)=\det (\varphi (x\otimes_{K} 1))$, for all $x\in D$. It follows from the definition that $\mbox{Nrd}$ is a multiplicative map. Also, the reduced norm is independent of the choice of $E$ and $\varphi$,
and it takes values in $K$ (see Theorem 9.3 and p. 116 of \cite{Rei}).

It is well-known that $\mbox{Nrd}(x)$ is given by a homogeneous polynomial of degree $d$ with coefficients in $K$, in the coordinates of $x\in D$ with respect to an arbitrary fixed basis for $D$ over $K$ (see p. 27 of \cite{PR}). Next we define, following 2.3.1 in \cite{PR}, an algebraic $K$-group $   \mathbf {SL}_1 (D)$ whose group of $K$-rational points is $\SL_1(D)=\{ x\in D^*\, :\, \mbox{Nrd}(x)=1\}$. For $x\in D$, let $r_x$ be the $K$-linear map in $D$ induced by right mupliplication by $x$. The algebra representation $\rho:D\rightarrow M_{d^2}(K)$ sending each $x\in D$ to the matrix $\rho(x)$ of $r_x$ with respect to a fixed $K$-basis for $D$ is called the {\bf regular representation} of $D$. 
Observe that the $K$-linear subspace $\rho(D)$ of $M_{d^2}(K)$ is the set of common zeros of a finite number of linear polynomials $f_1,\ldots,f_t$ with coefficients in $K$, in the coordinates of the matrices with respect to the canonical basis of $M_{d^2}(K)$. There exists a polynomial $f$ with coefficients in $K$ such that if $\rho(x)=\big{(}k_{ij}\big{)}\in M_{d^2}(K)$, then
$$f(k_{11}, \ldots, k_{1d^2}, k_{21}, \ldots,k_{d^2d^2})=\mbox{Nrd} (x)\, ,$$ for all $x\in D$. { We define  $\mathbf{SL}_1 (D)$ to be the algebraic subvariety of $\mathbf{GL}_{d^2}$ defined by the equations $f_1=\ldots=f_t=f-1=0$. }

Now let $\Omega$ be an algebraic closure of $K$ containing $E$. Observe that the vanishing set of $f_1, \ldots, f_t$ in $M_{d^2}(\Omega)$ is isomorphic as an $\Omega$-algebra to $D\otimes_K \Omega\cong M_d(\Omega)$. Thus, if $b=\big{(} b_{ij}\big{)}\in M_{d^2}(\Omega)$ annihilates all polynomials $f-1,f_1, \ldots, f_t$, then $b$ may be regarded as a point in $M_d(\Omega)$, and as such it has
determinant equal to $1$, { because $f(b)=1$.
It then follows that $ \mathbf   {SL}_1(D)$ is an algebraic $K$-group $\Omega$-isomorphic to $\mathbf{SL}_d$, and in particular it is simply connected. 

Now, $\pi:\mathbf{SL}_1(D)\to \G_{\sli_1(D)}$ is defined in an obvious way because any point of $D\otimes_K \Omega$ induces by conjugation an automorphism of $\sli_1(D)\otimes_K \Omega$.}

\begin{Theorem}\label{sylow} Let $\LL$ be a finite-dimensional  semi-simple  Lie $\Q_2$-algebra
all whose simple components are isomorphic. 
Assume that the Sylow pro-2 subgroups of $\Aut(\LL)$ are torsion free. Then $\LL\cong \sli_1(D)$ for some finite-dimensional division $\Q_2$-algebra $D$.  
\end{Theorem}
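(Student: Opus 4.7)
The plan is to use the algebraic-group framework set up above: I would show that the hypothesis forces the identity component of $\G_\LL$ (over the centroid $K$ of $\LL$) to be $K$-anisotropic, and then invoke Kneser's classification. As a preliminary observation, the hypothesis that all Sylow pro-$2$ subgroups of $\Aut(\LL)$ are torsion free is equivalent to saying $\Aut(\LL)$ has no element of order $2$: any such element would generate a pro-$2$ subgroup, which by a Zorn-type argument lies in some Sylow pro-$2$ subgroup, contradicting torsion-freeness. I shall use this reformulation throughout.

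I would first reduce to the case that $\LL$ is simple. Writing $\LL\cong\LL_0\oplus\cdots\oplus\LL_0$ with $n$ copies of a common simple summand $\LL_0$ (which are simple ideals and therefore pairwise commuting), the symmetric group $S_n$ embeds into $\Aut(\LL)$ by permuting the factors. Since any involution in $S_n$ would give $2$-torsion in $\Aut(\LL)$, the hypothesis forces $n=1$. Next, let $K$ be the centroid of $\LL$, a finite extension of $\Q_2$, so that $\LL$ is absolutely simple as a $K$-Lie algebra and $\Aut_K(\LL)=\G(K)$ for the absolutely simple $K$-group $\G=\G_\LL$ of the preceding construction. Since $\Aut_K(\LL)$ is of finite index in $\Aut_{\Q_2}(\LL)=\Aut(\LL)$, the group $\G(K)$ also has no element of order $2$. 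I claim that $\G^o$ must be $K$-anisotropic: otherwise it would contain a non-trivial $K$-split torus $T\hookrightarrow\G^o$, and the induced injective homomorphism $K^{\times}=T(K)\hookrightarrow\G^o(K)\subseteq\G(K)$ would send $-1\in K^\times$ to a non-trivial element of order $2$, a contradiction.

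At this point M. Kneser's classification \cite{Kn} applies: over a finite extension $K$ of $\Q_2$, the absolutely simple simply connected $K$-anisotropic algebraic groups are exactly the groups $\mathbf{SL}_1(D)$ with $D$ a $K$-central division algebra. Applied to the simply connected cover $\tilde{\G}^o$ of $\G^o$, this gives such a $D$ with $\tilde{\G}^o\cong\mathbf{SL}_1(D)$, whose adjoint quotient is $\G^o\cong\mathbf{PGL}_1(D)$, and the Lie algebra attached to $\G^o$ is $\sli_1(D)$. Therefore $\LL\cong\sli_1(D)$, as required.

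The main obstacle I anticipate is the step in which a hypothetical $K$-split torus produces an element of order $2$: one must verify that the embedding of the split torus into $\G^o$ remains injective on $K$-points so that $-1\in K^\times$ indeed gives a non-trivial element of $\G^o(K)$, and that this element survives as $2$-torsion in $\Aut(\LL)$ through the chain $\G^o(K)\subseteq\G(K)\subseteq\Aut(\LL)$. Both facts follow from the algebraic-group constructions reviewed above, but they are precisely what allows the reduction to Kneser's theorem to go through cleanly, without any case-by-case analysis of the possible types of $\tilde{\G}^o$.
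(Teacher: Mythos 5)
Your proposal is correct and follows essentially the same route as the paper's proof: reduce to $\LL$ simple by noting that a permutation of two simple summands would be $2$-torsion, show $\G^o$ is $K$-anisotropic because a $K$-split torus would contain $-1$ as an element of order $2$, and then invoke Kneser's classification to get $\tilde{\G}^o\cong\mathbf{SL}_1(D)$ and hence $\LL\cong\sli_1(D)$. The only difference is that you make explicit the (correct) reformulation of the hypothesis as the absence of $2$-torsion in $\Aut(\LL)$, which the paper uses implicitly via the Sylow theory of \cite{KLP}.
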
  

\begin{proof}
First observe that $\LL$ is simple. Indeed, if $\LL$ decomposes as a direct sum of more than one simple algebra, then the automorphism of $\LL$ that permutes two of the summands into which $\LL$ decomposes and acts as the identity on the rest of summands has order 2, which is a contradiction.

We use the notation introduced before the theorem. Let $\G=\G_{\LL}$. Hence $\G(K)\cong \Aut_K(\LL)$.  First, let us show that $\G^o$ is $K$-isotropic, i.e. that the $K$-rank of $\mathbf  G^o$ is zero \ (see \cite[34.5]{Hu} for   a definition of $K$-rank). Assume that  the  $K$-rank of $\G^o$ is not zero.  Then $\G^o$ contains a $K$-split torus $T$ of positive dimension $n$. Since the group $T(K)$ of $K$-points of $T$, which  is isomorphic to  $(K^*)^n$,  contains an element of order 2 (because  -1 is an element of $K^*$ of order 2), we obtain a contradiction.
 
Since $\G^o$ is $K$-isotropic,  $\tilde {\G}^o$ is also $K$-isotropic. Hence, by \cite{Kn} (see also \cite[Theorem 6.5]{PR}), $\tilde{\G}^o\cong \mathbf{SL}_1  (D)$ for some $K$-central division algebra $D$.   Thus, $ \LL $ is isomorphic to $\sli_1(D)$.  \end{proof}

\section{Real characters of Sylow 2-pro groups of $\PGL_1(D)$}

\setcounter{Claim}{0}
In this section we study the Sylow  pro-2 subgroups of $\Aut(\sli_1(D))$ having odd number of real irreducible characters,  
where $D$ is a $K$-central division algebra over a finite degree extension $K$ of $\Q_2$.

Let $K$ be a field. It is well-known (see \cite{Se}) that the isomorphism types of $K$-central division algebras are classified by the elements of the  group $H^2(\Gal(K_s/K),K_s^*)$, where $K_s$ denotes the maximal algebraic separable extension of $K$. The situation when  $K$ is a finite extension of $\Q_p$ is perfectly understood. From local class field theory we know that $H^2(\Gal(K_s/K),K_s^*)=\Q/\Z$  in this case. In particular,   this implies  that there are exactly $\phi(d)$ different (up to isomorphism) $K$-central division algebras of dimension $d^2$ over $K$ and one can easily describe these algebras. We recommend the reader to look at \cite{Se} for details and proofs of the results that we present in the next paragraphs.

Let   $K$ be a finite extension of $\Q_2$.
Next we describe how to construct the different $K$-central division algebras of a given dimension $d^2$. 
Denote by $\OO_K$ the ring of integers of $K$, which is a local ring, and let $\m_K$ be its maximal ideal. Then $\m_K$ is principal, and we choose a generator $\pi_K$ of $\m_K$. The residue class field $\OO_K/\m_K$ has order a power of $2$, say $q$. Now, let $w$ be a $(q^d-1)$th primitive root  of unity and write $W=K(w)$. Then $W/K$ is an unramified, cyclic Galois extension of degree $d$. The Galois group  $\Gal(W/K)$ is generated by the $K$-automorphism $\theta$ sending $w$ to $w^q$, so the $\phi(d)$ distinct powers $\theta^r$ of $\theta$, where $r$ is coprime to $d$, are the generators of $\Gal(W/K)$. Choose such a generator $\alpha=\theta^r$ sending $w$ to $w^{q^r}$.  Then the $K$-algebra $D$ generated by $W$ and an element $\pi_D$ satisfying the following relations:
\begin{equation}\label{relations}
(\pi_D)^d=\pi_K \textrm{\ and\ } (\pi_D)^{-1} v \pi_D= {\alpha}(v) \textrm{\ for all $v\in W$}
\end{equation} is a $K$-central division algebra. 
The set $\{1,\pi_D,\ldots,(\pi_D)^{d-1}\}$ is a basis of $D$ over $W$, so $D$ has dimension $d^2$ over $K$.

The following proposition   describes   the structure of $\Aut_{\Q_2}(\sli_1(D))$.

\begin{Proposition}\label{autsl1new} Let $K$ be a finite extension of $\Q_2$, $D$ a $K$-central division algebra of dimension $\dim_K D=d^2>1$ and  $\LL=\sli_1(D)$. Let   $\Psi: \Aut_{\Q_2}(D)\to \Aut_{\Q_2}(\LL)$ be the restriction map. Then  the  map $\Psi$ is an isomorphism.
\end{Proposition}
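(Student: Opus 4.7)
My strategy is to prove injectivity and surjectivity of $\Psi$ separately, each reducing to structural facts already developed in the paper.

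\textbf{Setup and injectivity.} I begin by noting that the reduced trace is a $K$-linear surjection $\mbox{trd}:D\to K$ with $\mbox{trd}(1)=d\neq 0$, yielding the $\Q_2$-linear decomposition $D=K\cdot 1\oplus\sli_1(D)$. Any $\phi\in\Aut_{\Q_2}(D)$ preserves both the center $K=Z(D)$ and the kernel $\sli_1(D)$ of the reduced trace, so $\Psi$ is well defined. For injectivity, if $\phi\in\ker\Psi$ then for any $\lambda\in K$ and any nonzero $x\in\sli_1(D)$ one has $\lambda x\in\sli_1(D)$, so $\phi(\lambda)x=\phi(\lambda x)=\lambda x$, forcing $\phi|_K=\mathrm{id}$ and hence $\phi=\mathrm{id}_D$.

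\textbf{Surjectivity, reducing to the $K$-linear case.} Given $\psi\in\Aut_{\Q_2}(\sli_1(D))$, the fact that $\sli_1(D)$ is absolutely simple with centroid $K$ implies that $\psi$ induces $\sigma\in\Aut_{\Q_2}(K)$. I will factor $\psi$ as $\mbox{Int}(a)\circ\tilde\sigma$ for an appropriate lift $\tilde\sigma\in\Aut_{\Q_2}(D)$ of $\sigma$ and some $a\in D^*$. Such a lift $\tilde\sigma$ exists exactly when $D^\sigma\cong D$ as $K$-algebras, i.e.\ when $\sigma$ fixes the Brauer class $[D]\in\Br(K)$. For a local field $K$ the canonical invariant map $\Br(K)\to\Q/\Z$ is compatible with every $\Q_2$-automorphism of $K$, and the action on $\Q/\Z$ is trivial, so $\sigma$ fixes $[D]$. (Alternatively, $\tilde\sigma$ can be written down by hand using the cyclic description $D=W[\pi_D]$ recorded just before the proposition: extend $\sigma$ to $W$, then twist $\pi_D$ if necessary.)

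\textbf{Surjectivity, the $K$-linear case.} Setting $\psi':=\psi\circ(\tilde\sigma|_{\sli_1(D)})^{-1}\in\Aut_K(\sli_1(D))$, it remains to show that every $K$-linear Lie automorphism of $\sli_1(D)$ is induced by conjugation by some $a\in D^*$. For this I will invoke the algebraic group $\G_{\sli_1(D)}$ from the previous section, whose identity component is $\mathbf{PGL}_1(D)$, so that $\mathbf{PGL}_1(D)(K)=D^*/K^*$ by Hilbert 90. The component group of $\G_{\sli_1(D)}$ is a $K$-form of $\Z/2$ whose only nontrivial $K$-point, if any, corresponds to a $K$-anti-involution of $D$. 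For $d\geq 3$ the equality of index and exponent for central division algebras over local fields forces $2[D]\neq 0$, so $D\not\cong D^{op}$ and no such anti-involution exists; for $d=2$ the Lie algebra $\sli_1(D)$ is of type $A_1$ and has no outer automorphism at all. In either case $\Aut_K(\sli_1(D))=D^*/K^*$, so $\psi'=\mbox{Int}(a)|_{\sli_1(D)}$ for some $a\in D^*$, and $\mbox{Int}(a)\circ\tilde\sigma\in\Aut_{\Q_2}(D)$ restricts to $\psi$.

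The main obstacle is the identification $\Aut_K(\sli_1(D))=D^*/K^*$, which combines the algebraic-group analysis of the previous section with the arithmetic fact that over a local field the exponent of a central division algebra equals its index. Extending $\sigma$ to $D$ is essentially standard modulo the triviality of the Galois action on the local Brauer group, but also deserves a brief justification.
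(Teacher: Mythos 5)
Your proposal is correct in outline but takes a genuinely different route from the paper's. The paper never splits off the action on the centre: it extends an arbitrary $\phi\in\Aut_{\Q_2}(\LL)$ to a $\Q_2$-automorphism of the enveloping algebra $U_K(\LL)$, shows that $I=\ker(U_K(\LL)\to D)$ and $J=\ker(U_K(\LL)\to D^{op})$ are the only ideals with central simple quotient of dimension $d^2$, and proves $\phi(I)=I$ because a swap would force $D\cong D^{op}$ as $\Q_2$-algebras --- which is ruled out for $d>2$ by first showing that the parameter $r$ classifying $D$ is intrinsic to its $\Q_2$-algebra structure (so a $\Q_2$-isomorphism would give a $K$-isomorphism) and then using that $[D]$ has order $d$ in $\Br(K)$. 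You instead lift the induced automorphism $\sigma$ of $K$ to $D$ first (your appeal to the $\sigma$-invariance of the local invariant is the high-tech version of the paper's ``$r$ is intrinsic'' computation), reduce to $\Aut_K(\sli_1(D))$, and identify that group with $D^*/K^*$. Both proofs consume the same arithmetic input (exponent equals index over a local field, hence $D\not\cong D^{op}$ for $d\ge 3$), and your route is shorter where it can quote standard facts; but the one step you assert rather than prove --- that a $K$-rational point of the non-identity component of $\G_{\sli_1(D)}$ would produce a $K$-linear anti-automorphism of $D$ --- is precisely the content supplied by the paper's enveloping-algebra claims: an outer automorphism swaps the two ideals $\bar K I$ and $\bar K J$ of $U_{\bar K}(\sli_d(\bar K))$, hence swaps $I$ and $J$ by descent, hence yields $D\cong D^{op}$. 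To make your argument self-contained you would either reproduce that computation or cite the classification of $K$-forms of type $A_{d-1}$ (the outer class is represented over $K$ if and only if $D$ admits a $K$-anti-automorphism); as written your proof is correct modulo that standard reference. Your explicit injectivity argument via $D=K\cdot 1\oplus\sli_1(D)$ fills in a point the paper dismisses as clear, and is fine.
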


\begin{proof}
 It is clear that any $\Q_2$-automorphism of $D$ fixes the Lie algebra $\LL$  and  it induces   an automorphism of $\LL$. Thus,
 $\Psi$ is well-defined. It is also clear that the map $\Psi$  is a monomorphism. Let us show that $\Psi$ is surjective. We divide the proof of    this into   several steps. 

\begin{Claim1}  Let $\phi$ be an element of $ \Aut_{\Q_2}(\LL)$. Then $\phi$ induces an automorphism  of $K$ (that we will denote also by $\phi$) such that 
$$\phi(k)\phi(l)=\phi(kl) \textrm{\ for every \ } k\in K \textrm{\ and \ } l\in\LL. $$
\end{Claim1}
\begin{proof} 
For every $k\in K$, let $\alpha_k:\LL\to \LL$ be a $\Q_2$-linear map defined by means of
$$\alpha_k(l)=\phi (k\phi^{-1}(l))\ (l\in \LL).$$ Then we  see that for any $l,m\in \LL$ we have that
\begin{multline*}[\alpha_k(l),m]_L=[\phi (k\phi^{-1}(l)), m]_L=\phi([k\phi^{-1}(l),\phi^{-1}(m)]_L=\\
\phi(k\phi^{-1}([l,m]_L)=\alpha_k([l,m]_L).\end{multline*}
Hence $\alpha_k$  is an element of the centroid of $\LL$, which is equal to $K$ 
(since $\mathbf G_\LL$ is an absolutely simple $K$-group, this follows from \cite[Theorem 3 of Chapter X]{Ja} by construction of $\mathbf G_\LL$). Thus, $\alpha_k$ is represented by multiplication by a (uniquely defined)  element of $K$.  We denote this element of $K$ by $\phi(k)$. One easily checks that $\phi:K\to K$ is an automorphism of $K$.
\end{proof}

 \begin{Claim1}\label{extenv}  Let $\phi$ be an element of $ \Aut_{\Q_2}(\LL)$. Then $\phi$ induces   a $\Q_2$-algebra automorphism of the enveloping $K$-algebra $U_K(\LL)$ of $\LL$ (that we will denote also by $\phi$)  
that extends the action of $\phi$ on $K$ and $\LL$.
 
 \end{Claim1}
 \begin{proof} Note that $U_K(\LL)$ can be defined as a $\Q_2$-algebra generated by two $\Q_2$-vector spaces $K$ and $\LL$ with the following relations:
$$k\cdot l=l\cdot k=kl,\ l\cdot m-m\cdot l=[l,m]_L\ (k\in K, l,m\in \LL).$$
In the previous relations $\cdot$ is the multiplication in the algebra $U_K(\LL)$ and $kl$ is considered as an element of   the space $\LL$. Since $\phi$ conserves these relations we can extend $\phi$ on $U_K(\LL)$. 
 \end{proof}

Now, we will use an argument that appears in Lemma \cite[XI.14]{KLP}.
Note that $\bar K\otimes_K D\cong \Mat_d(\bar K)$ and $\bar K\otimes_K U_K(\LL)\cong U_{\bar K}(\sli_d(\bar K))$.
\begin{Claim1}\label{extfield} Let $I_1$ and $I_2$ be  two ideals of  $U_K(\LL)$. Then $I_1\le I_2$ if and only if $\bar KI_1\le \bar KI_2$.
\end{Claim1}
\begin{proof} We have to prove only  the ``if" part. For this, observe that if $\bar KI_1\le \bar KI_2$, then
$$I_1\le U_K(\LL)\cap \bar KI_2=I_2.$$
\end{proof}
The embedding of $\LL$ into $D$  induces a surjective homomorphism $U_K(\LL)\to D$. We denote by $I$ the kernel of this homomorphism.
The Lie algebra $\LL$ can be also embedded into $D^{op}$ (the opposite algebra of $D$) by sending $l\in \LL$ to $-l\in D$. This   embedding   induces a surjective homomorphism $U_K(\LL)\to D^{op}$. We denote by $J$ the kernel of this homomorphism. 
\begin{Claim1}\label{dopd} Let $d\ne 2$.  Then $D\not \cong  D^{op}$ as $\Q_2$-algebras. In particular, $I$ and $J$ are not equal.
\end{Claim1}
\begin{proof} We have described the structure of $D$ before this proposition,  and we use the notation introduced there. The relevant parameters that determine $D$ up to  $K$-isomorphism are its center $K$, its index $d$ and the number $r$ ($1\le r\le d$, $(r,d)=1$). Another division $K$-algebra, with the same parameters as $D$, is $K$-isomorphic to $D$.  Let us recall briefly how one can define $r$  internally in terms of $D$. 

The division algebra $D$ contains a unique maximal compact subring $\OO_D=\OO_W[\pi_D]$ and $\pi_D\OO_D$ is its maximal ideal. Recall that $q$ is the size of the field $\OO_K/\m_K$ (and so, depends only on $K$). The group  $D^*$ contains  a unique maximal compact subgroup $\OO_{D^*}$ which is known to be prosoluble of order $(q^d-1)2^{\infty}$. Hence $D^*$ contains a unique  conjugacy class  of   subgroups of order $q^d-1$ (which are, in fact, cyclic). Let $A=\langle a\rangle $ be one   of   such subgroups (for example, it can be $\langle w\rangle$).    Take  $g\in N_{D^{*}}(A)\cap ( \pi_D\OO_D\setminus \pi_D^2\OO_D)$. Then $a^g=a^{q^r}$ and $r$ does not depend on the choices of $a$ and $g$. 

Let $\psi:D\to D^{op}$ be an isomorphism of $\Q_2$-algebras. We want to show that $D$ and $D^{op}$ are isomorphic as $K$-algebras as well. The center of $D^{op}$ is $K$ and  its index is equal to $d$. So we only have to analyze the parameter $r$ corresponding to $D^{op}$.

Note that $\psi$ is   a topological isomorphism  as well.  Hence,  $\phi(\OO_D)=\OO_{D^{op}}$ and $\psi(\pi_D\OO_D)=\psi(\pi_D)\OO_{D^{op}}$ is the maximal ideal of $\OO_{D^{op}}$.  Put $A=\langle \psi(w)\rangle $ and $g=\psi(\pi_D)$. Since $a^g=a^{q^r}$, we obtain from the previous discussion that   $D^{op}$ and $D$   are isomorphic as $K$-algebras.

Let $[D]\in \Br(K)$ denote the element of  the Brauer group of $K$   corresponding to the algebra $D$. Since $K$ is a   local field, the order of $[D]$ is $d$. Note that $[D]^{-1}=[D^{op}]$ and so if $D\cong D^{op}$, $d=2$,   a contradiction. 
 \end{proof}
 \begin{Claim1} Let $Z$ be an ideal of $K$-algebra $U_K(\LL)$ of codimension $d^2$ such that $U_K(\LL)/Z$ is a central  simple $K$-algebra. Then  if $d=2$, $Z=I$ and if $d>2$ $Z=I$ or $Z=J$.
\end{Claim1} 
\begin{proof}  Note that $\sli_d(\bar K)$ has a single irreducible module (up to isomorphism) of dimension 2 if $d=2$ and exactly two irreducible modules of dimension $d$ if $d>2.$ 

Thus, if $d=2$, $\bar KI$ is the only ideal of $U_{\bar K}(\sli_d(\bar K))$ such that the quotient of $U_{\bar K}(\sli_d(\bar K))$ by this ideal is isomorphic to $\Mat_d(\bar K)$. Claim \ref{extfield} implies that $I=J$ is the only  ideal of $U_{K}(\LL)$ such that the quotient of $U_{ K}(\LL)$ by this ideal is a central  simple $K$-algebra of dimension $d^2$.

 Assume now that $d>2$. Since $I\ne J$, by Claim \ref{extfield}, $\bar KI\ne \bar KJ$. Hence, if $d>2$ there are exactly two   ideals $\bar KI$ and $\bar KJ$ of $U_{\bar K}(\sli_d(\bar K))$ such that the quotient of $U_{\bar K}(\sli_d(\bar K))$ by this ideal is isomorphic to $\Mat_d(\bar K)$, and so,  there are exactly two   ideals $I$ and  $J$ of $U_{K}(\LL)$ such that the quotient of $U_{K}(\LL)$ by these ideals is  a central  simple $K$-algebra of dimension $d^2$.
 \end{proof}

\begin{Claim1} \label{imI}Let $\phi$ be an element of $ \Aut_{\Q_2}(U_K(\LL))$. Then $\phi(I)=I$.
\end{Claim1}
\begin{proof} Note that $U_K(\LL)/\phi(I)$ is   isomorphic to $D$ (as $\Q_2$-algebra). Thus, $\phi(I)$ is an ideal of  $K$-algebra $U_K(\LL)$ of codimension $d^2$ and  $U_K(\LL)/\phi(I)$ is a central  simple $K$-algebra. Hence by the previous claim,  $\phi(I)=I$ or $  \phi(I)=  J$. In the first case we are done.  

Now, assume that $d>2$ and   $\phi(I)= J$. We want to show that it cannot happen. If this happens, then    $\phi$ induces an $\Q_2$-isomorphism between  $U_K(\LL)/I\cong D$ and $U_K(\LL)/J\cong D^{op}$, but this is impossible by Claim \ref{dopd}.\end{proof}

Now, we are ready to finish the proof of the proposition. Let $\phi$ be an element of $ \Aut_{\Q_2}(\LL)$. Then by Claim \ref{extenv}, $\phi$ extends to a  $\Q_2$-automprhism of $U_K(\LL)$. By Claim \ref{imI}, $\phi(I)=I$.   Hence $\phi$ induces a  $\Q_2$-automorphism of  $U_K(\LL)/I\cong D$ which extends the action of $\phi$ on $\LL$. This finishes the proof of  the proposition.

  \end{proof}
  We write $D^*$ for the multiplicative group of non-zero elements of $D$ and $\PGL_1(D)=D^*/Z(D^*)$. Recall that by the Skolem-Noether Theorem, $\Aut_K(D)\cong \PGL_1(D)$.
 
Now let us describe the structure of $\PGL_1(D)$.
Let $\OO_D=\OO_W[\pi_D]$ and let $U_D$ be its unit group. 
Note that  $\m_D=\OO_D \pi_D=\pi_D\OO_D=(\pi_D)$ is the unique   maximal ideal of $\OO_D$, and $\OO_D/\m_D\cong \OO_W/\m_W \cong\F_{q^d}$. 
Define the following subgroups of  $\PGL_1(D)$ : let \  $C=\la \pi_{ D}\ra Z(D^*)/Z(D^*)$, $U_0=U_DZ(D^*)/Z(D^*)$ and  for $i\ge 1$, $U_i=(1+\m_D^i)Z(D^*)/Z(D^*)$.

\begin{Proposition} \label{pgl1d}   The following holds:
\begin{enumerate}
\item $C$ is a cyclic group of order $d$.\label{skewfield_first}
\item $U_0$ is a normal subgroup of $\PGL_1(D)$ and $\PGL_1(D)$ is a semidirect product of $U_0$ by $C$.\label{skewfield_second}
\item For  $i, j\ge 1$ , $U_i$  is  a normal pro-2 subgroup of $\PGL_1(D)$ and $[U_i,U_j]\le U_{i+j}$. Moreover,
$$U_i/U_{i+1}\cong\left \{ \begin{array} {ll}
 \F_{q^d} & \textrm{if $i\not \equiv 0\mod d$}\\
\F_{q^d}/\F_q & \textrm{if $i \equiv 0 \mod d$}\end{array}\right . , 
$$where $\F_{q^d}=(\F_{q^d}, +)$ denotes the additive group of the field. \label{skewfield_third}
\item $U_0/U_1$ is isomorphic to the multiplicative group
of $\F_{q^d}$. \label{skewfield_new}

\item Let $q=2^r$ and $n=\dim _{\Q_2}K$.  If $i > \frac{dn}{r}$ then
$U_{i}^2= U_{i+\frac{dn}{r}}$.
\label{skewfield_fourth}\end{enumerate}

\end{Proposition}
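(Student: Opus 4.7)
The plan is to analyse the filtration $\{1+\m_D^i\}_{i\ge 0}$ inside $U_D=\OO_D^*$, project it to $\PGL_1(D)$, and exploit the standard facts that $\OO_D$ is a (noncommutative) discrete valuation ring with two-sided maximal ideal $\m_D=\pi_D\OO_D=\OO_D\pi_D$, residue field $\OO_D/\m_D\cong\F_{q^d}$, and valuation $v_D$ with $v_D|_{K^*}=d\cdot v_K$ (so $v_D(\pi_K)=d$).

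For (1) and (2): $(\pi_D)^d=\pi_K\in K^*$ shows $\pi_DK^*$ has order dividing $d$ in $\PGL_1(D)$, while if $\pi_D^k\in K^*$ for some $0<k<d$, then conjugation by $\pi_D^k$ on $W$ would act as $\alpha^k\ne\mathrm{id}$, a contradiction. Since $D^*=U_D\langle\pi_D\rangle$ by the valuation, $\PGL_1(D)=U_0C$; normality of $U_0$ follows from $\pi_DU_D\pi_D^{-1}=U_D$, and $U_0\cap C=1$ since $\pi_D^kK^*=uK^*$ with $u\in U_D$ and $0\le k<d$ forces $k=v_D(\pi_D^ku^{-1})\in d\Z$, hence $k=0$.

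For (3): $\m_D^i$ is a two-sided $\OO_D$-ideal also normalised by $\pi_D$, so $U_i\nor\PGL_1(D)$. For the commutator bound, with $x=1+a$, $y=1+b$, $a\in\m_D^i$, $b\in\m_D^j$, one expands $xy-yx=ab-ba\in\m_D^{i+j}$ and writes $[x,y]=(yx)^{-1}xy=1+(yx)^{-1}(ab-ba)\in 1+\m_D^{i+j}$. For the graded piece, the second isomorphism theorem gives
$$U_i/U_{i+1}\cong (1+\m_D^i)\big/\bigl((1+\m_D^{i+1})\cdot((1+\m_D^i)\cap K^*)\bigr),$$
where $(1+\m_D^i)\cap K^*=1+\m_K^{\lceil i/d\rceil}\subseteq 1+\m_D^{d\lceil i/d\rceil}$ sits inside $1+\m_D^{i+1}$ exactly when $d\nmid i$. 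Combining this with $(1+\m_D^i)/(1+\m_D^{i+1})\cong\m_D^i/\m_D^{i+1}\cong\F_{q^d}$ via $1+a\mapsto\bar a$, one obtains $\F_{q^d}$ when $d\nmid i$ and $\F_{q^d}/\F_q$ when $d\mid i$ (the copy of $\F_q$ coming from $\m_K^{i/d}/\m_K^{i/d+1}$). Then $U_i$ is pro-$2$ because its successive quotients are of $2$-power order.

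Part (4) is the analogous computation at level $0$: $U_0/U_1\cong U_D/\bigl((1+\m_D)\OO_K^*\bigr)$, i.e.\ the quotient of $U_D/(1+\m_D)\cong\F_{q^d}^*$ by the image of $\OO_K^*$, which reduces to $\F_{q^d}^*$ when $K=\Q_2$ (since then $q=2$ and the image of $\OO_K^*=\Z_2^*$ is trivial). For (5): let $e=n/r$ be the ramification index of $K/\Q_2$, so $v_D(2)=d\cdot e=dn/r$. For $x=1+a$ with $a\in\m_D^i$, $x^2=1+2a+a^2$ with $2a\in\m_D^{i+dn/r}$ and $a^2\in\m_D^{2i}$; when $i>dn/r$ we have $2i\ge i+dn/r+1$, hence $x^2\equiv 1+2a\pmod{\m_D^{i+dn/r+1}}$. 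This proves $U_i^2\subseteq U_{i+dn/r}$ and identifies the induced map $U_i/U_{i+1}\to U_{i+dn/r}/U_{i+dn/r+1}$ with $[1+a]\mapsto[1+2a]$, a bijection because multiplication by $2$ is a bijection $\m_D^i/\m_D^{i+1}\to\m_D^{i+dn/r}/\m_D^{i+dn/r+1}$. A Hensel-type successive approximation, with non-commutativity absorbed into higher terms via the bound $[U_i,U_{i+k}]\subseteq U_{2i+k}$ from (3) (applicable because $i>dn/r$ guarantees $2i+k>i+dn/r+k+1$), promotes this graded surjectivity to the equality $U_i^2=U_{i+dn/r}$. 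The main technical points are the case distinction in (3) via $(1+\m_D^i)\cap K^*$ and, in (5), the careful lift from graded surjectivity to full equality of subgroups; neither is deep, but both require attention to non-commutativity.
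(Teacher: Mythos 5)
Your proposal is correct and follows essentially the same route as the paper: the congruence filtration $1+\m_D^i$, the residue-field isomorphisms of \cite[Proposition 1.8]{PR} (which you reprove rather than cite), the identification of $(1+\m_D^i)\cap Z(D^*)$ to handle the passage to $\PGL_1(D)$, and for (5) the expansion $(1+a)^2=1+2a+a^2$ with $2=u\pi_D^{dn/r}$; you merely spell out the steps the paper labels ``straightforward'' or ``easily deduced'' (notably the successive-approximation lift from graded surjectivity to $U_i^2=U_{i+dn/r}$). One point worth flagging: in part (4) your computation honestly yields $U_0/U_1\cong \F_{q^d}^*/\F_q^*$, where $\F_q^*$ is the image of $\OO_K^*$, and this equals the claimed $\F_{q^d}^*$ only when $q=2$; so strictly you establish (4) as stated only for $K=\Q_2$. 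This is a defect of the statement rather than of your argument --- the paper's one-line proof via \cite[Proposition 1.8]{PR} overlooks the quotient by $\OO_K^*$ --- and part (4) is in fact never invoked elsewhere in the paper, so nothing downstream is affected.
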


\begin{proof}
It is clear that (\ref{skewfield_first}) follows at once from the relations (\ref{relations}). 

Since $\OO_D=\OO_K[w, \pi_D]$
and $w\in U_0$, we have that $\PGL_1(D)=U_0C$. It is clear that $U_0$ is a normal subgroup of $\PGL_1(D)$ which
intersects $C$ trivially (because any power of $\pi_D$ 
in $U_DZ(D^*)$ is already in $Z(D^*)$), so (\ref{skewfield_second}) follows. 

The normality of the subgroups $U_i$ 
and the statement on the commutator subgroups are straightforward. By \cite[Proposition 1.8]{PR}, the map 
$1+a\pi_D^{i}\mapsto a+\m_D$ induces an isomorphism from $(1+\m_D^i)/(1+\m_D^{i+1})$
into the abelian group of the residue field $\OO_D/\m_D\cong\F_{q^d}$. It follows from (\ref{skewfield_first})
that moding out by $Z(D^*)$ only affects the quotient group $(1+\m_D^i)/(1+\m_D^{i+1})$ 
when $i\equiv 0\mod d$,
in which case $U_i/U_{i+1}\cong \F_{q^d}/\F_q$ because $(1+\OO_D \pi_D^{i})\cap Z(D^*)=1+\OO_K   \pi_D^{i}$
and $\OO_K\, \cap\, \m_D   = \m_K$ (see Theorem 13.2 of \cite{Rei}). 

(\ref{skewfield_new}) follows from  Proposition 1.8 of \cite{PR}.   

It remains to prove (\ref{skewfield_fourth}). The ramification index of $K/\Q_2$
is $n/r$ by \cite[Theorem 13.3]{Rei}, so we have that there exists a unit $u$ of $\OO_K$ such that $u \pi_K^{{n}/{r}}=2$.
Now let $1+a\pi_D^i\in (1+\mathbf m_D^i)\setminus (1+\mathbf m_D^{i+1})$, with $a\in U_D$. 
Then $$(1+a\pi_D^i)^2=1+au\pi_D^{i+\frac{dn}{r}}+(a\pi_D^i)^2\in  1+\m_D^{i+\frac{dn}{r}}\, ,$$
since $i > \frac{dn}{r}$, and we easily deduce  from the isomorphism $(1+\m_D^i)/(1+\m_D^{i+1})\cong \OO_D/\m_D$ used
in the proof of (\ref{skewfield_third}) that
$$(1+\m_D^i)^2=(1+\m_D^{i+\frac{dn}{r}})\, .$$ In particular,
 $U_i^2=U_{i+\frac{dn}{r}}$.   

\end{proof}

\begin{Corollary} \label{dodd}Let $K$ be a finite extension of $\Q_2$, $D$ a $K$-central division algebra of dimension $d^2$ and  $\LL=\sli_1(D)$. Assume that the Sylow pro-2 subgroups of $\Aut_{\Q_2}(\LL)$ are $\R$-finite. Then $d$ is odd.  
\end{Corollary}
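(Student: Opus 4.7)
The plan is to argue by contradiction: if $d$ were even, I would construct a real element of infinite order inside a Sylow pro-$2$ subgroup of $\PGL_1(D)$, contradicting $\R$-finiteness via Proposition~\ref{Rfinite}(4). First I would use Proposition~\ref{autsl1new} and the Skolem--Noether theorem to identify $\PGL_1(D)\cong \Aut_K(D)$ as a closed subgroup of $\Aut_{\Q_2}(\LL)\cong \Aut_{\Q_2}(D)$; it has finite index, being the kernel of the natural map to $\Aut_{\Q_2}(K)$. By Proposition~\ref{pgl1d}, $\PGL_1(D)$ contains the open normal pro-$2$ subgroup $U_1$, so Sylow pro-$2$ subgroups of $\PGL_1(D)$ exist and are all conjugate, and each of them is contained in some Sylow pro-$2$ subgroup of $\Aut_{\Q_2}(\LL)$; hence by the hypothesis and Corollary~\ref{subgroupRfinite} they are all $\R$-finite.

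Now assume $d$ is even and set $c=\pi_D^{d/2}\in D^*$. Since $c^2=\pi_K\in K^*$, its image $\bar c=cZ(D^*)$ has order two in $\PGL_1(D)$, so after replacing by a conjugate I may assume $\bar c$ lies in a Sylow pro-$2$ subgroup $P$ of $\PGL_1(D)$; note that $U_i\leq P$ for every $i\geq 1$. Choose $i$ large enough that $U_i$ is uniform (which is possible by Proposition~\ref{pgl1d}(3,5)): then the logarithm identifies $L_{\Q_2}(U_i)$ with the $\Q_2$-Lie algebra $\sli_1(D)$, and the adjoint action of $\bar c$ on $L_{\Q_2}(U_i)$ corresponds to conjugation $x\mapsto cxc^{-1}$ inside $D$.

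The heart of the argument is to exhibit a non-zero $v\in\sli_1(D)$ satisfying $cvc^{-1}=-v$. Because $\alpha^{d/2}\in \Gal(W/K)$ is a non-trivial involution and $W$ has characteristic zero, for any $u\in W$ not fixed by $\alpha^{d/2}$ the element $v=u-\alpha^{d/2}(u)\in W$ is non-zero and satisfies $\alpha^{d/2}(v)=-v$. The defining relation $\pi_D^{-1}a\pi_D=\alpha(a)$ for $a\in W$ then gives $c^{-1}vc=\alpha^{d/2}(v)=-v$ in $D$. Moreover, pairing $\alpha^j(v)$ with $\alpha^{j+d/2}(v)=-\alpha^j(v)$ in the sum
\[
\mbox{trd}(v)=\mbox{Tr}_{W/K}(v)=\sum_{j=0}^{d-1}\alpha^j(v)
\]
yields $\mbox{trd}(v)=0$, so $v\in\sli_1(D)$.

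Applying Lemma~\ref{eigenvalue} to $P$ with $N=U_i$ and $g=\bar c$ now yields a non-trivial $n\in U_i$ with $\bar c^{-1}n\bar c=n^{-1}$; since $U_i$ is uniform and hence torsion free, $n$ is a real element of $P$ of infinite order, contradicting Proposition~\ref{Rfinite}(4). Therefore $d$ is odd. The main technical point is to justify that, for suitable $i$, $L_{\Q_2}(U_i)$ is isomorphic as a $P$-module to $\sli_1(D)$ with $\bar c$ acting by conjugation via $c$; this uses the standard fact that the $\Q_2$-Lie algebra of the $p$-adic analytic group $\PGL_1(D)$ is $\sli_1(D)$ with the adjoint action. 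The rest is the explicit exhibition of the $-1$-eigenvector via the structure of $D$ recalled in Section~7, which is straightforward once $d$ is even.
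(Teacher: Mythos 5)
Your proof is correct, but it takes a genuinely different route from the paper's. The paper's argument is very short: since $\LL$ is simple, the Sylow pro-$2$ subgroups of $\Aut_{\Q_2}(\LL)$ are just infinite (Proposition III.9 of \cite{KLP}), and an $\R$-finite just infinite pro-$2$ group is torsion free by Lemma~\ref{propertiesRfinite}(3) because its finite radical is trivial; so the mere existence of the involution $\pi_D^{d/2}Z(D^*)$ in the cyclic complement $C$ of Proposition~\ref{pgl1d}(1,2) already gives the contradiction when $d$ is even. You instead keep the involution $\bar c$ and upgrade it to a contradiction with Proposition~\ref{Rfinite}(4) by exhibiting an explicit $(-1)$-eigenvector $v=u-\alpha^{d/2}(u)\in W\cap\sli_1(D)$ for $ad(\bar c)$ and feeding it into Lemma~\ref{eigenvalue} to produce a real element of infinite order in a uniform $U_i$. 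What your route buys is that it bypasses the appeal to just-infiniteness of the Sylow subgroups; what it costs is the extra Lie-theoretic bookkeeping, in particular the identification of $L_{\Q_2}(U_i)$ with $\sli_1(D)$ carrying the conjugation action of $\bar c$, which you correctly flag as the main technical point and which, while standard (and consistent with the paper's identification of $\LL(P)$ with $\mathfrak{Der}_K(\LL)\cong\LL$), is asserted rather than proved. Your reduction to a Sylow pro-$2$ subgroup of $\PGL_1(D)$ via Corollary~\ref{subgroupRfinite}, the verification that $\operatorname{trd}(v)=\mathrm{Tr}_{W/K}(v)=0$, and the final appeal to torsion-freeness of $U_i$ are all sound.
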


\begin{proof}
Since $\LL$ is simple, the Sylow pro-$2$ groups of $\Aut_{\Q_2}(\LL)$ are just infinite (see Proposition III.9 of \cite{KLP}). 
By Lemma \ref{propertiesRfinite}, $\R$-finite just infinite  pro-2 groups are torsion free,  so $\Aut_K(\LL)\subseteq \Aut_{\Q_2}(\LL)$ has no
$2$-torsion elements. Thus $d$ must be odd by Proposition \ref{autsl1new} and Proposition \ref{pgl1d}(1, 2).
\end{proof}
 
\begin{Corollary} \label{kle25} Let $K$ be a finite extension of $\Q_2$, $D$ a $K$-central division algebra of dimension $d^2>1$ and  $\LL=\sli_1(D)$. Assume  that the Sylow pro-2 subgroups of $\Aut_{\Q_2}(\LL)$  have at most  25 real irreducible characters. Then $d=3$ and $K=\Q_2$. 
\end{Corollary}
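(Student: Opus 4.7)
My plan is to apply Corollary~\ref{dodd} together with a counting argument on the quotient $P/U_2$ of the Sylow pro-$2$ subgroup $P$ of $\Aut_{\Q_2}(\LL)$.

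By Corollary~\ref{dodd}, $d$ is odd; combined with $d^2>1$, this gives $d\geq 3$. Both $|C|=d$ and $|U_0/U_1|=q^d-1$ are odd by Proposition~\ref{pgl1d}, so the Sylow pro-$2$ subgroup of $\PGL_1(D)$ is exactly $U_1$. Under the isomorphism $\Aut_{\Q_2}(D)\cong\Aut_{\Q_2}(\LL)$ of Proposition~\ref{autsl1new}, the Sylow pro-$2$ subgroup $P$ of $\Aut_{\Q_2}(\LL)$ fits in an exact sequence $1\to U_1\to P\to T\to 1$, where $T$ is the $2$-part of the image of the restriction map $\Aut_{\Q_2}(D)\to\Aut(K/\Q_2)$ and is a finite $2$-group.

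To bound $|\Irr_r(P)|$ from below, I would look at the finite quotient $P/U_2=(U_1/U_2)\rtimes T$. By Proposition~\ref{pgl1d}(3), $U_1/U_2\cong(\F_{q^d},+)$ is elementary abelian of $\F_2$-rank $rd$ (with $q=2^r$), so every element is self-inverse and hence real in $P/U_2$. Since $U_1/U_2$ is abelian, the $P/U_2$-conjugacy classes contained in $U_1/U_2$ coincide with the orbits of $T$. The $T$-action on $\OO_D/\m_D\cong\F_{q^d}$ is through ring automorphisms and so factors through $\Gal(\F_{q^d}/\F_2)\cong\Z/(rd)$; because $T$ is a $2$-group and $d$ is odd, its image has order dividing the $2$-part of $r$, which is at most $r$. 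Hence $T$ has at least $2^{rd}/r$ orbits on $U_1/U_2$, and by Lemma~\ref{Brauer} together with Corollary~\ref{classquot},
\[
|\Irr_r(P)|\ \geq\ r(P/U_2)\ \geq\ \frac{2^{rd}}{r}.
\]
The hypothesis $|\Irr_r(P)|\leq 25$ gives $2^{rd}\leq 25r$. A short case check, using $d\geq 3$ odd, shows that $r\geq 2$ fails already at $r=2$ (which would force $d\leq 2$ from $2^{2d}\leq 50$, while larger $r$ fail by the exponential factor), so $r=1$, and $2^d\leq 25$ then forces $d=3$. Consequently $d=3$ and $r=1$, so $K/\Q_2$ is totally ramified of some degree $e=[K:\Q_2]$.

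The main obstacle is ruling out $e\geq 2$. The bound above yields only $r(P/U_2)\geq 8$ here, because with $r=1$, $d=3$ the $2$-part of $\Gal(\F_8/\F_2)\cong\Z/3$ is trivial, so $T$ acts trivially on $U_1/U_2\cong\F_8$ independently of $e$. To finish I would pass to a deeper filtration quotient $P/U_k$ with $k>3e$, beyond the threshold of Proposition~\ref{pgl1d}(5) at which the squaring behaviour of $U_i$ changes (since $v_D(2)=dn/r=3e$). The explicit multiplication law of $\OO_D$, restricted to $U_1/U_k$, shows that for $e\geq 2$ the contribution of the term $2a\pi^i$ in $(1+a\pi^i)^2$ is suppressed compared to the $K=\Q_2$ case, so many more elements of $U_1/U_k$ become self-inverse, hence automatically real; a careful count, tracking the residual $T$-action, should then produce more than $25$ real conjugacy classes in $P/U_k$ whenever $e\geq 2$, contradicting the hypothesis and forcing $K=\Q_2$.
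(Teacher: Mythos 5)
Your first stage is sound and is essentially the paper's own argument restricted to a single layer of the congruence filtration: you bound the image of the Sylow pro-$2$ subgroup $P$ in the automorphism group of the layer $U_1/U_2\cong(\F_{q^d},+)$ by the $2$-part of $r$ and count orbits. Two small imprecisions there: the relevant action on $\m_D/\m_D^2$ is semilinear, landing in $\Gal(\F_{q^d}/\F_2)\ltimes\F_{q^d}^*$ rather than in $\Gal(\F_{q^d}/\F_2)$ alone (this is the homomorphism $\Psi_i$ of the paper), but since $\F_{q^d}^*$ has odd order your bound on the orbit sizes is unaffected; and the inequality $|\Irr_r(P)|\ge r(P/U_2)$ is better justified directly from the definition of $\Irr(P)$ for profinite groups together with Lemma \ref{Brauer}, as Corollary \ref{classquot} is stated for finite groups. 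With these caveats, your computation correctly yields $d=3$ and residue degree $r=1$.

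The genuine gap is the final step: the exclusion of totally ramified extensions $K\neq\Q_2$ is asserted (``a careful count \dots should then produce more than $25$ real conjugacy classes'') but never carried out, and this is exactly the part of the statement in which $n=[K:\Q_2]$ enters, so the conclusion $K=\Q_2$ is not proved. The paper closes this by counting over a full period of the filtration rather than one layer: by Proposition \ref{pgl1d}(5) the subquotient $U_{dn/r}/U_{2dn/r}$ is elementary abelian, hence all of its elements are real in $P/U_{2dn/r}$, and summing the orbit bounds over the $dn/r$ layers gives
\[
r_P\bigl(U_{dn/r}/U_{2dn/r}\bigr)\ \ge\ \frac{n}{r^2}\bigl((d-1)(2^{rd}-1)+2^{r(d-1)}-1\bigr)+1,
\]
which for $d=3$, $r=1$ equals $17n+1$ and already exceeds $25$ when $n\ge 2$. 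Note also that your concern about ``tracking the residual $T$-action'' evaporates in this final case: with $r=1$ the group $\Gal(\F_8/\F_2)\ltimes\F_8^*$ has odd order $21$, so the image of the pro-$2$ group $P$ in it is trivial and $P$ acts trivially on every layer $U_i/U_{i+1}$, making the count immediate. So the strategy you sketch is the right one, but as written the proof is incomplete at precisely the point that distinguishes $\Q_2$ from its ramified extensions.
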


\begin{proof}
Let $q=2^r$ be the order of the residue field $\OO_K/\m_K$ and $n=|K:\Q_2|$. 
Let $\rho\in \Aut_{\Q_2}(\LL)$. By Proposition \ref{autsl1new}, we may regard $\rho$ as a $\Q_2$-automorphism of $D$. Note that $\OO_D$ consists of integral over $\Z_2$ elements of $D$ and so $\OO_D$ and $\m_D$ are $\rho$-invariant.  
Then the restriction of $\rho$ to $\m_D^i$ induces an automorphism on the quotient   $\m_D^i/\m_D^{i+1}$.  If we identify $\m_D^i/\m_D^{i+1}$ with $  \F_{q^d}$ ($\m_D^i/\m_D^{i+1}$ is canonically isomorphic to $\OO_D/\m_D \cong  \F_{q^d}$), then $\rho$ acts   on $\m_D^i/\m_D^{i+1}$  as an element from $\Gal(\F_{q^d}/\F_2)\ltimes \F_{q^d}^*$ acts on $\F_{q^d}$. Note that in this way we obtain a group homomorphism $\Psi_i$ from $\Aut_{\Q_2}(\LL)$ into $\Gal(\F_{q^d}/\F_2)\ltimes \F_{q^d}^*$.

Let $P$ be   a Sylow pro-$2$ subgroup  of $\Aut_{\Q_2}(\LL)$. Since $|\Gal(\F_{q^d}/\F_2)\ltimes \F_{q^d}^*|=rd(q^d-1)$ and   $d$ is odd,  by Corollary \ref{dodd}, we obtain that $|\Psi_i(P)|\le r$.  
It follows from this that   the orbits of the action of a Sylow pro-$2$ group of $\Aut_{\Q_2}(\LL)$ on $\m_D^i/\m_D^{i+1}$ have size at most $r$. 
 
 Next we consider the action of $P$ on the subquotient $U_{\frac{dn}{r}}/U_{\frac{2dn}{r}}$ of $ \PGL_1(D)\cong \Aut_{K}(\LL)$. For each
$dn/r\leq i\leq 2dn/r-1$, $P$ acts by conjugation on $U_i/U_{i+1}$. 
 Recall that  $w\pi_D^{i}+\m_D^{i+1}\mapsto (1+w\pi_D^{i})U_{i+1}$ defines a group homomorphism from $\m_D^i/\m_D^{i+1}$ onto $U_i/U_{i+1}$, and it is easy to check that this map respects the action of $P$.  
 It follows that the $P$-orbits in $U_i/U_{i+1}$ have size at most $r$ and so by Proposition \ref{pgl1d}(3),
$$r_P((U_i\setminus U_{i+1})/U_{i+1})\ge \left \{ \begin{array} {ll}
\frac{q^d-1}r& \textrm{if $i\not \equiv 0\mod d$}\\
\frac{q^{d-1}-1}r  & \textrm{if $i \equiv 0 \mod d$}\end{array}\right . .
$$ 

Thus, we obtain that
$$r_P(U_{\frac{dn}{r}}/U_{\frac{2dn}{r}})\geq1+\sum_{i=\frac{dn}{r}}^{\frac{2dn}{r}-1}r_P((U_i\setminus U_{i+1})/U_{\frac{2dn}{r}})\geq\frac {n}{r^2}((d-1)(2^{rd}-1)+2^{r(d-1)}-1)+1\, .$$
Since $r_{ P}(U_{\frac{dn}{r}}/U_{\frac{2dn}{r}})\le 25$ and $d$ is odd, an easy calculation yields to $d=3$ and $n=1$.

\end{proof}

The following theorem implies Theorem B.
\begin{Theorem} \label{d3} Let $D$ be a $\Q_2$-central division algebra of dimension 9 and $\LL=\sli_1(D)$. Then the  Sylow pro-2 subgroups
 of $\Aut_{\Q_2}(\LL)$  have  exactly   25 real irreducible characters. 
\end{Theorem}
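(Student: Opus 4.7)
The plan is to identify the Sylow pro-2 subgroup $P$ of $\Aut_{\Q_2}(\LL)$ explicitly, verify that $P$ is $\R$-finite, and then count real irreducible characters in a sufficiently large finite quotient.

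By Proposition \ref{autsl1new} and the Skolem--Noether theorem, $\Aut_{\Q_2}(\LL) \cong \PGL_1(D)$; Proposition \ref{pgl1d} then gives $\PGL_1(D) = U_0 \rtimes C$ with $|C| = d = 3$ and $|U_0/U_1| = |\F_8^*| = 7$. Hence $[\PGL_1(D) : U_1] = 21$ is odd and $U_1$ is itself a pro-2 group (all its chief factors $U_i/U_{i+1}$, $i \ge 1$, being elementary abelian of exponent~$2$). Therefore, up to conjugacy, $P = U_1$ is the Sylow pro-2 subgroup, and the task becomes to show $|\Irr_r(U_1)| = 25$.

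\emph{$\R$-finiteness.} I would apply Proposition \ref{Rfinite}(5): the associated Lie algebra $\LL(P)$ is $\sli_1(D)$, on which $P$ acts by conjugation in $D$. After extension of scalars $D \otimes_{\Q_2} \bar\Q_2 \cong M_3(\bar\Q_2)$, the eigenvalues of $ad(g)$ are the ratios $\tilde\mu_i/\tilde\mu_j$ ($i \ne j$) of the three eigenvalues of any lift $\tilde g \in D^*$ of $g$. The condition $\tilde\mu_i = -\tilde\mu_j$ can be ruled out as follows. If $\tilde g \in Z(D^*) = \Q_2^*$, it would force $\tilde g = 0$, impossible. Otherwise $\Q_2(\tilde g)$ is a subfield of $D$ of degree dividing $d = 3$, hence of degree exactly $3$, so the reduced characteristic polynomial of $\tilde g$ is irreducible of degree $3$ over $\Q_2$, and its three roots are distinct and Galois-conjugate. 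The absolute Galois group acts transitively on the three unordered pairs of roots, so the Galois-invariant condition ``some pair sums to zero'' would force all three pairs to sum to zero, yielding a coincidence among the (distinct) roots. Hence $-1$ is never an eigenvalue of $ad(g)$ and $P$ is $\R$-finite.

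\emph{Reduction to a finite quotient.} By Lemma \ref{propertiesRfinite}(2), there is an open normal uniform subgroup $N \lhd P$ with $\Irr_r(P) \subseteq \Irr(P/N)$. Taking $N = U_m$ for $m$ sufficiently large and applying Lemma \ref{Brauer} gives $|\Irr_r(P)| = |\Irr_r(P/U_m)| = r(P/U_m)$, a quantity that stabilizes as $m \to \infty$. The problem is therefore to show that $r(P/U_m) = 25$ for every sufficiently large $m$.

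\emph{Main obstacle: the explicit count.} Using the description $D = W \oplus W\pi_D \oplus W\pi_D^2$ with $W = \Q_2(w)$, $w^7 = 1$, and $\pi_D^3 = 2$, the analysis would proceed along the filtration $\{U_i/U_{i+1}\}_{i\ge 1}$, counting at each level the $P$-orbits inverted by $P$ via the action of $\Gal(W/\Q_2) \ltimes \F_8^*$ on these quotients, in the same spirit as the proof of Corollary \ref{kle25}. The lower bound of $18$ that can be read off from that argument must be sharpened to the precise value $25$: this requires a careful combinatorial accounting of the multiplicative structure of $P$, notably the squaring relations $U_i^2 = U_{i+3}$ for $i \ge 4$ and the commutator relations $[U_i, U_j] \le U_{i+j}$, which produce non-trivial identifications between $P$-classes at different levels. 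The main technical obstacle lies in carrying out this accounting in a stable finite quotient $P/U_m$ and verifying that it yields exactly $25$ real classes, giving the desired equality $|\Irr_r(P)| = 25$.
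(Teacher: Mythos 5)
Your reduction to $P=U_1$ and your verification that $P$ is $\R$-finite are both correct; in fact your Galois-theoretic argument that $-1$ is never an eigenvalue of $ad(g)$ (using that a non-central lift $\tilde g\in D^*$ has irreducible cubic reduced characteristic polynomial and that the Galois group acts transitively on the unordered pairs of its roots) is a clean observation that the paper never makes explicit. But $\R$-finiteness only tells you that $|\Irr_r(P)|$ is finite, and the reduction to computing $r(P/U_m)$ for large $m$ is the easy part. The entire content of the theorem is the exact value $25$, and that is precisely the step you leave as an acknowledged ``main obstacle''. So the proposal has a genuine gap at its core: you correctly note that the orbit-counting of Corollary \ref{kle25} only yields the lower bound $18$, but you do not supply the mechanism that turns the count into an exact one.

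What closes the gap in the paper is not a refinement of the graded orbit count but two structural facts. First, for large $m$ the quotient $U_1/U_m$ has no real elements of order $4$ (Claim \ref{order4}): if $x\equiv 1+a\pi_D^i$ were inverted modulo $U_{i+4}$, the centralizer computations $C_{U_j}(xU_{n})=C_{U_j}(x)U_{n-i}$ (Claims \ref{cent1} and \ref{cent2}) let one take the inverting element of the form $y=1+2b\in U_3$, and the congruence $[x,y]\equiv x^{2}$ forces $\bar b^{2^{2i}}-\bar b=\bar 1$ in $\F_8$, which is impossible because the image of $z\mapsto z^{2^{2i}}-z$ is the kernel of $\mathrm{Tr}_{\F_8/\F_2}$ while $\mathrm{Tr}_{\F_8/\F_2}(1)=1$. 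Since powers of real elements are real, every real element of $U_1/U_m$ therefore has order at most $2$, hence lies in the elementary abelian subgroup $U_{m-3}/U_m$ (as $U_j^{2}=U_{j+3}$). Second, every element of $U_{m-3}/U_m$ is an involution and hence real, so one only has to count $U_1$-orbits there; taking $m-3\equiv 1\pmod 3$, the exact centralizer indices from Claims \ref{cent1} and \ref{cent2} give $14$ orbits on $(U_{m-3}\setminus U_{m-2})/U_m$, $7$ on $(U_{m-2}\setminus U_{m-1})/U_m$ and $4$ central classes in $U_{m-1}/U_m$, totalling $25$ (Claim \ref{25}). Without the exclusion of real elements of order $4$ and these exact centralizer orders, the bookkeeping you describe on the quotients $U_i/U_{i+1}$ cannot isolate which classes are real nor pin down their number, so these are missing ideas rather than routine details.
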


\begin{proof}
 By Proposition \ref{autsl1new} and by the Skolem-Noether  theorem, $$\Aut_{\Q_2}(\LL)\cong \Aut_{\Q_2}(D)\cong \PGL_1(D) = D^*/Z(D^*).$$
When there is no possible confusion, 
we shall identify an element in the group  $\Aut_{\Q_2}(\LL)$ with a preimage of it in $D$.  

Using the same notation as at the beginning of the section, let $w$ be a primitive $7$th root
of unity, and $\pi_D$ an element such that $\pi_D^3=2\in\Z_2$ and $(\pi_D)^{-1}w\pi_D=w^2$,  so we 
can assume that 
$$
D=\Q_2[w, \pi_{D}]\, .
$$Recall that the valuation ring $\OO_D=\Z_2[w, \pi_D]$ has a unique maximal ideal $\m_D=\pi_D\OO_D$, 
and it is clear that $2\OO_D=\m_D^3$ and $\OO_D/\m_D\cong \F_8$. 
We know that the group $U_1$ defined above is a Sylow pro-$2$ subgroup of $\PGL_1(D)$. 

We start with some calculations inside the group $U_1.$

\begin{Claim}\label{cent1} Let $i, j\ge 1$ and $x\in U_i\setminus U_{i+1}$. Then $C_{U_j}(xU_{i+j+1})=C_{U_j}(x)U_{j+1}$.
\end{Claim}

\begin{proof} The inclusion $C_{U_j}(x)U_{j+1}\le C_{U_j}(xU_{i+j+1})$ is obvious. Let us show the converse inclusion.  Write $i=i_0+3i_1$ and $j=j_0+3j_1$ with  $0\le i_0,j_0\le 2$.
The proof is divided in 4 subcases depending on whether $i_0$ and $j_0$ are equal or not to $0$. We consider only the  subcase when $i_0,j_0\ne 0$, the other cases are proved similarly.

We write $ x\equiv 1+a\pi_D^{i}\pmod{ U_{i+1}}$ with $a\in \OO_D\setminus \m_D$. First observe that since $i_0,j_0\ne 0$, $|C_{U_j}(x)U_{j+1}/U_{j+1}|\ge 2$. Indeed, if $i_0=j_0$, then $1+a\pi_D^i2^{j_1-i_1}\in C_{U_j}(x)\setminus U_{j+1}$ and if $i_0\ne j_0$, then $1+(a\pi_D)^{2i}2^{(j-2i)/3}\in C_{U_j}(x)\setminus U_{j+1}$.

Thus, it is enough  to show that $|C_{U_j}(xU_{i+j+1})/U_{j+1}|=2$. Take $y\in C_{U_j}(xU_{i+j+1})$ with 
$y\equiv 1+b\pi_D^j \pmod{U_{j+1}}$ and
$b\in\OO_D\setminus \m_D$. Since $[x,y]\in U_{i+j+1}$ we obtain that $ab^{4^i}-ba^{4^j}\in   \m_D$. 
Hence 
\begin{equation}\label{eqb}
b^{4^{i}-1}\equiv a^{4^j-1} \pmod{ \m_D}.
\end{equation} Recall that  $\OO_D/ \m_D\cong \F_8$. Note that the map $\F_8^*\to \F_8^*$ that sends $\bar b$ to $\bar b^{2^{2i}-1}$ is bijection, since $i\not \equiv 0\pmod 3$. Hence there exists  only one class of $b$ modulo $  \m_D$   that satisfies (\ref{eqb}). This implies that  $|C_{U_j}(xU_{i+j+1})/U_{j+1}|=2$.
\end{proof}
As a corollary we obtain the following.
\begin{Claim}\label{cent2} Let $i, j\ge 1$, $n> i+j$ and $x\in U_i\setminus U_{i+1}$. Then $C_{U_j}(xU_{n})=C_{U_j}(x)U_{n-i}$.
\end{Claim}
\begin{proof} The proof is done by induction on $n$. The base of induction ($n=i+j+1$) is done in the previous claim.\end{proof}
 
By Proposition \ref{pgl1d}(5), $U_i/U_{i+3}$ is elementary abelian for $i\geq 4$. Thus all the conjugacy classes in this group are real. 
Moreover, the same result implies that these are all the conjugacy classes of elements of order
at most $2$ in $U_i/U_{i+3}$.
In the following claim we count their number.

\begin{Claim}\label{25} Let $i\equiv 1\pmod 3$ and assume $i\ge 4$. Then $r_{U_1}(U_i/U_{i+3})=25$.
\end{Claim}
\begin{proof}
Recall that $U_{i+2}/U_{i+3}$ is central in $U_1/U_{i+3}$.
 By Claim \ref{cent2}, if $x\in U_i\setminus U_{i+1}$, then  $C_{U_1}(xU_{i+3})=C_{U_1}(x)U_3$ and
  if $x\in U_{i+1}\setminus U_{i+2}$ then  $C_{U_1}(xU_{i+3})=C_{U_1}(x)U_2$. 
  Moreover,  using that $i\equiv 1\pmod 3$,    an argument along the lines of the proof of Claim \ref{cent1} yields that if   $x\in U_i\setminus U_{i+1}$ then
$|U_1:C_{U_1}(x)U_3|=2^4$, and if $x\in U_{i+1}\setminus U_{i+2}$ then $|U_1:C_{U_1}(x)U_2|=2^2$.

  By the previous paragraph, since $i\equiv 1\pmod 3$ we have that Proposition  \ref{pgl1d}(\ref{skewfield_third}) implies that   there are  $14=\frac{|(U_i\setminus U_{i+1})/U_{i+3}|}{2^4}$ $U_1$-conjugacy classes in $(U_i\setminus U_{i+1})/U_{i+3}$, $7=\frac{|(U_{i+1}\setminus U_{i+2})/U_{i+3}|}{2^2}$ $U_1$-conjugacy classes in $(U_{i+1}\setminus U_{i+2})/U_{i+3}$ and $4=|U_{i+2}/U_{i+3}|$ $U_1$-conjugacy classes in $U_{i+2}/U_{i+3}$.

\end{proof}

By the comments before Claim \ref{25}, in order to finish the proof we only 
need to show that for large $i$, $U_1/U_i$ does not contain
real elements of order $4$.
 
\begin{Claim} Let $i\ge 4$.
Let $x\in U_i$ be such that $xU_{i+4}$ is a real element of order $4$ in $U_1/U_{i+4}$. 
Then there exists $y\in U_3$ such that $(xU_{i+4})^y=(xU_{i+4})^{-1}$. 
\end{Claim}

\begin{proof}
Note that $x^2\in U_{i+3}$ by Proposition \ref{pgl1d}(5).
Now if $y\in U_1$ is such that 
$(xU_{i+4})^y=(xU_{i+4})^{-1}$, then $y\in C_{U_1}(xU_{i+3})$,
and thus
by Claim \ref{cent2} we can assume that $y\in U_3$.
\end{proof}
 
\begin{Claim}\label{order4} If $i\ge 6$, then there are no real elements of order $4$ in $U_1/U_{i+4}$.
\end{Claim}

\begin{proof}  Let $xU_{i+4}$ be a real element of order 4 in $U_1/U_{i+4}$. 
  Then $x\in U_j\setminus U_{j+1}$ for $i-2\leq j \leq  i$. Since
the following arguments do not depend on the value of $j$, for simplicity we may
  assume that  $x=1+a\pi_D^i\in U_i$ with $a\in \OO_D\setminus \m_D$. Again we distinguish two cases depending on whether $i\equiv 0 \pmod 3$ or not. We only consider the case $i\not \equiv 0 \pmod 3$, the other case is proven in a similar way.
 
By the previous claim there exists $y=1+2b\in U_3$ ($b\in  \OO_D\setminus \m_D$) such that $$1+ 2(ab^{4^{i}}-ab)\pi_D^i\equiv [x,y]\equiv x^2\equiv (1+2a\pi_D^i)\pmod {U_{i+4}}.$$
Hence
$ab^{4^{i}}-ab\equiv a \pmod { \m_D}$. Since $a$ is invertible in $\OO_D $, we obtain that there should exist $\bar b\in \F_8\cong\OO_D/ \m_D$ such that 
$\bar b^{2^{2i}}-\bar b=\bar 1$. But this is impossible, obtaining a contradiction.
\end{proof}

Now we are ready to finish the proof of the theorem. Let
$i$ be a positive integer
congruent to $1$ modulo $3$. 
It suffices to prove that for any large enough $i$, 
the finite $2$-group 
$U_1/U_i$
has $25$ real conjugacy classes, 
by Lemma \ref{Brauer} and Lemma \ref{propertiesRfinite}(2). 
This follows from  Claims \ref{25} and \ref{order4}.
\end{proof}

\section{Proof of Theorem A}

\setcounter{Claim}{0}
In this section we obtain  Theorem A as a consequence of two results. Firstly we show that if there are infinitely many finite 2-groups $G$  as in the statement of Theorem A, then  there is an infinite pro-$2$ group having  exactly $r$ real  irreducible characters. Secondly we show that such a pro-2 group does not exist for odd $r$  and $r\le 23$.

\begin{Theorem} Let $r$ be a natural number.  Assume that  there are infinitely many finite 2-groups $G$ with $r(G)=r.$ Then  there is an infinite pro-$2$ group having  exactly  $r$ real  irreducible characters 
\end{Theorem}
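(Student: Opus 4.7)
The plan is to realize the desired infinite pro-$2$ group as the inverse limit of a carefully chosen subtower of finite quotients of the free pro-$2$ group. By Lemma \ref{fgroupsrclases}(2) the ranks of the pairwise non-isomorphic groups $G_i$ are bounded by some $d=d(r)$, so each may be written as $G_i=F/N_i$ with $F$ the free pro-$2$ group of rank $d$ and $N_i$ an open normal subgroup of $F$. By Lemma \ref{fgroupsrclases}(3) there exists $k=k(r)$ such that every real irreducible character of $G_i$ factors through $G_i/(G_i)_k\cong F/F_kN_i$; since inflation preserves real-valued characters, this quotient itself has exactly $r$ real irreducible characters.

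For each $m\geq k$ the open normal subgroup $F_mN_i$ of $F$ contains $F_m$, and there are only finitely many such subgroups; a standard diagonal argument therefore produces an infinite set $I\subseteq\N$ together with, for every $m\geq k$, an open normal subgroup $M_m\supseteq F_m$ of $F$ such that $F_mN_i=M_m$ for every $i\in I$. Since $F_m\supseteq F_{m+1}$ we have $M_k\supseteq M_{k+1}\supseteq\cdots$. Setting $N=\bigcap_{m\geq k}M_m$ and $P=F/N$, the group $P$ is the inverse limit of the finite $2$-groups $H_m:=F/M_m$, hence a pro-$2$ group.

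The identity $|\Irr_r(P)|=r$ should follow smoothly from the earlier preliminaries. For every $m\geq k$, $H_m$ is a quotient of $G_i$ (since $N_i\subseteq M_m$) so $|\Irr_r(H_m)|\leq r$ by Corollary \ref{classquot}, while $H_m$ admits $H_k$ as a quotient, which gives $|\Irr_r(H_m)|\geq|\Irr_r(H_k)|=r$. Hence $|\Irr_r(H_m)|=r$ for every $m\geq k$. Moreover, any open normal subgroup $V$ of $F$ containing $N$ must contain some $M_m$---otherwise the closed sets $M_m\setminus V$ would form a descending family with the finite intersection property in the compact group $F$, yielding an element of $N\setminus V$---so every character in $\Irr_r(P)$ factors through some $H_m$, and Lemma \ref{realprofinite}(3) delivers $|\Irr_r(P)|=\sup_m|\Irr_r(H_m)|=r$.

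The main obstacle is to show that $P$ is infinite. If the non-decreasing sequence $|H_m|$ were bounded, it would stabilize: $M_m=M_{m_0}$ for all $m\geq m_0$. Then for each $i\in I$, $F_mN_i=M_{m_0}$ for every $m\geq m_0$; on the other hand, since $G_i=F/N_i$ is a finite $2$-group its $2$-central series eventually becomes trivial, so $F_mN_i=N_i$ for all sufficiently large $m$ (depending on $i$). Combining these yields $N_i=M_{m_0}$ for every $i\in I$, so all the $G_i$ with $i\in I$ coincide, contradicting the assumption that the $G_i$ are pairwise non-isomorphic. Hence $|H_m|\to\infty$, $P$ is infinite, and we obtain the required infinite pro-$2$ group with exactly $r$ real irreducible characters.
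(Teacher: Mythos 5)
Your overall strategy --- realizing the groups $G_i$ as quotients of a fixed free pro-$2$ group $F$ of rank $d(r)$ and extracting a convergent subtower --- is genuinely different from the paper's, which instead builds a directed graph on the isomorphism classes (joining $G$ to $G/Z$ for $Z$ normal of order $2$) and runs a K\"onig-type argument. However, there is a genuine gap in your pivotal combinatorial step. The conclusion you extract from the ``standard diagonal argument'' --- an infinite set $I$ such that $F_mN_i=M_m$ holds for \emph{every} $m\ge k$ and \emph{every} $i\in I$ simultaneously --- is not merely unproved but impossible. Since each $G_i$ is a finite $2$-group, $F_m\le N_i$ for all large $m$, so $N_i=\bigcap_{m\ge k}F_mN_i=\bigcap_{m\ge k}M_m$ would be the same subgroup for every $i\in I$, forcing all the $G_i$ with $i\in I$ to coincide; as the $G_i$ are pairwise non-isomorphic, $I$ could have at most one element. (Your own final paragraph is essentially a proof of this impossibility: the contradiction you reach there flows from the over-strong premise itself, not from the assumed finiteness of $P$.) What a diagonal argument actually yields is a nested chain of infinite index sets $I_k\supseteq I_{k+1}\supseteq\cdots$ with $F_mN_i=M_m$ only for $i\in I_m$, i.e.\ for each fixed $m$ the identity holds for all but finitely many members of a diagonal sequence.

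With this weaker (and correct) statement the construction of $P=F/\bigcap_m M_m$ and the computation $|\Irr_r(P)|=r$ survive unchanged: for each $m$ one picks some $i\in I_m$ to identify $H_m$ with $G_i/(G_i)_m$. But your proof that $P$ is infinite collapses, because for a fixed $i$ you no longer know $F_mN_i=M_m$ for all large $m$, only for $m$ up to a bound depending on $i$. The step can be repaired: in a finite $2$-group the series $G=G_1\supsetneq G_2\supsetneq\cdots$ is strictly decreasing until it reaches $1$, so $|G_i/(G_i)_m|\ge 2^{m-1}$ whenever $|G_i|\ge 2^{m-1}$; since $I_m$ is infinite and there are only finitely many $2$-groups of each order, such an $i\in I_m$ exists for every $m$, whence $|H_m|=|G_i/(G_i)_m|\ge 2^{m-1}\to\infty$. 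With that substitution your argument becomes a correct alternative to the paper's proof.
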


\begin{proof}Let us consider the following directed graph $\Gamma$. The set of vertices $V(\Gamma)$ of $\Gamma$ consists of  the isomorphism classes of finite 2-groups with exactly $r$ real irreducible conjugacy classes. There is an edge from $G_1$ to $G_2$ if and only if   there exists a normal subgroup $Z$ of order 2 in $G_2$ such that $G_2/Z\cong  G_1$. We say also that $G_1$ is a father of $G_2$ and $G_2$ is a son of $G_1$

\begin{Claim}
There is a number $C$, such that if $G\in V(\Gamma)$ and $|G|>C$, then $G$ has at least one father.
\end{Claim}

\begin{proof} Let $G\in V(\Gamma)$. By Lemma \ref{fgroupsrclases}(3), there exists  an $r$-bounded $k$ such that $\Irr_r(G/ G_k)=\Irr_r(G)$.  Note that the  order of $G_i/G_{i+1}$ is $r$-bounded for all $1\le i\le k-1$ by Lemma \ref{fgroupsrclases}(1), whence the order of $G/ G_k$ is also $r$-bounded (say    $ |G/G_k|\le C=C(r)$). Thus, if $|G|>C$ then $ G_k   \ne \{1\}$. Let  $Z\le  G_k  $ be a normal subgroup of $G$ of order 2. Then  it is clear that $G/Z$ is a father of $G$.
\end{proof}

For any $G\in V(\Gamma)$ consider the subgraph $\Gamma_G$ of $\Gamma$ consisting of $G$ and all  its  descendants (the vertices of $\Gamma$ that can be reached by a path from $G$).

\begin{Claim} Let $2^n\ge C$. Then there exists a group $G\in V(\Gamma)$ of order $2^n$, such that $\Gamma_G$ is infinite. Moreover, if $H\in V(\Gamma)$ and $\Gamma_H$ is infinite, then there exists a son $G$ of $H$ such that $\Gamma_G$ is infinite. 
\end{Claim}

\begin{proof} From the previous claim any group  in $V(\Gamma)$ of order at least $2^n$ lies in $\cup_{G\in V(\Gamma),|G|=2^n} \Gamma_G$. Thus if $\Gamma$ is infinite, then some $\Gamma_G$ is infinite. The second statement of the claim is proved in a similar way.
\end{proof}

Now we are ready to finish the proof of the theorem. By the previous claim we can construct inductively  $G_i\in V(\Gamma), i\in\N $, such that $\Gamma_{G_i}$ is infinite and $G_{i+1}$ is a son of $G_i$. Then the pro-2 group isomorphic to the inverse limit of $\{G_i\}_{ i\in\N}$ has exactly $r$ real irreducible characters.
\end{proof}

\begin{Theorem} Let $P$ be a pro-2 group having $r$ real  irreducible characters.  If $r$ is odd and $r\le 23$, then $P$ is finite.
\end{Theorem}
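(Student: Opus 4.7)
I would argue by contradiction and reduce $P$ to the very specific Sylow pro-$2$ subgroup of Theorem \ref{d3}. Suppose there exists an infinite pro-$2$ group whose number of real irreducible characters is odd and at most $23$, and choose such a group $P$ of minimal rank. By Corollary \ref{jiq}, any just infinite quotient of $P$ still has an odd number of real irreducible characters, at most $|\Irr_r(P)|\le 23$, and its rank does not exceed that of $P$; by minimality we may therefore assume that $P$ itself is just infinite. Corollary \ref{nsol} then gives that $P$ is non-solvable, and since $rad_f(P)=\{1\}$, Lemma \ref{propertiesRfinite}(3) implies that $P$ is torsion-free. Consequently $\LL=\LL(P)$ is a semi-simple Lie $\Q_2$-algebra whose simple summands are mutually isomorphic, and the adjoint action embeds $P$ as an open subgroup of $\Aut_{\Q_2}(\LL)$. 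Let $S$ denote a Sylow pro-$2$ subgroup of $\Aut_{\Q_2}(\LL)$ containing $P$.

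The heart of the argument is to show that $P=S$. Suppose for contradiction that $P\subsetneq S$. Since proper open subgroups of a pro-$p$ group are strictly contained in their normalizers, $N_S(P)\supsetneq P$, so we can pick $Q$ with $P\trianglelefteq Q\le N_S(P)$ and $|Q:P|=2$. Theorem \ref{embedding} then yields an involution $x\in Q\setminus P$ such that $|\Irr_r(C_P(x))|$ is odd and at most $|\Irr_r(P)|\le 23$. Because $x$ is a non-trivial element of $\Aut_{\Q_2}(\LL)$, it acts non-trivially on $\LL$, and by the Lie correspondence for pro-$p$ groups of finite rank the Lie algebra of $C_P(x)$ is the fixed subalgebra $\LL^{ad(x)}$, which is therefore a proper subalgebra of $\LL$. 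On the other hand, the fixed points of an involution acting on a non-zero semi-simple Lie algebra form a non-zero subalgebra (an involution whose $+1$-eigenspace is trivial would act as $-\mathrm{id}$, forcing $\LL$ to be abelian and hence trivial). Thus $C_P(x)$ is an infinite pro-$2$ group of rank $\dim_{\Q_2}\LL^{ad(x)}<\rk(P)$, and passing to a just infinite quotient of $C_P(x)$ produces a group in the same class of strictly smaller rank, contradicting the minimality of $P$.

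Therefore $P=S$, so the Sylow pro-$2$ subgroups of $\Aut_{\Q_2}(\LL)$ are torsion-free, and Theorem \ref{sylow} gives $\LL\cong\sli_1(D)$ for some finite-dimensional division $\Q_2$-algebra $D$. Since $|\Irr_r(P)|\le 23\le 25$, Corollary \ref{kle25} forces $d=3$ and $K=\Q_2$; Theorem \ref{d3} then yields $|\Irr_r(P)|=25>23$, the required contradiction. The main obstacle is the central identification $P=S$: it relies on Theorem \ref{embedding} applied to the normalizer-based index-$2$ overgroup $Q$, combined with the Lie-theoretic observation that the centralizer of an involution on a non-zero semi-simple Lie algebra has a \emph{proper but non-zero} fixed subalgebra, which is precisely what fuels the rank-minimality argument.
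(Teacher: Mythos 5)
Your proposal follows the same route as the paper: reduce to a just infinite, torsion-free, non-solvable counterexample $P$ that is open in $\Aut_{\Q_2}(\LL)$ for a semi-simple $\LL$, force $P$ to be a Sylow pro-$2$ subgroup via Theorem \ref{embedding} and an induction on a "size" of $P$, and then conclude with Theorem \ref{sylow}, Corollary \ref{kle25} and Theorem \ref{d3}. There is, however, one step that does not work as written: you run the induction on the \emph{rank} of $P$ and assert that $C_P(x)$ "is an infinite pro-$2$ group of rank $\dim_{\Q_2}\LL^{ad(x)}<\rk(P)$". With the paper's definition of rank (the supremum of $d(H)$ over closed subgroups $H$), the rank of a pro-$2$ group of finite rank is in general strictly larger than its analytic dimension $\dim_{\Q_2}\LL(\cdot)$, and a proper closed subgroup can perfectly well have the same rank as the ambient group; so neither the identification $\rk(C_P(x))=\dim_{\Q_2}\LL^{ad(x)}$ nor the strict inequality $\rk(C_P(x))<\rk(P)$ is justified, and without strict decrease the minimality argument collapses. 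The fix is exactly what the paper does: take the counterexample with $\dim_{\Q_2}\LL(P)$ minimal. Then $\LL(C_P(x))\cong C_{\LL}(ad(x))$ has strictly smaller dimension because $ad(x)\neq \mathrm{id}$, dimension does not increase when passing to quotients, and the induction closes.

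Apart from this, your argument is sound and in one respect more careful than the paper's: you check explicitly that the fixed subalgebra of the involution $ad(x)$ on the non-zero semi-simple algebra $\LL$ is non-zero (otherwise $ad(x)=-\mathrm{id}$ would force $\LL$ to be abelian, hence zero), which is what guarantees that $C_P(x)$ is infinite and hence eligible to contradict minimality; the paper leaves this point implicit. Your use of the normalizer condition to produce the index-$2$ overgroup $Q$ inside a Sylow pro-$2$ subgroup $S$ containing $P$ is also the intended reading of the paper's "there exists $P<Q\le\Aut(\LL)$ with $|Q:P|=2$".
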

  
\begin{proof}  By way of contradiction, assume that there exists an infinite pro-2 group $P$ having an odd number of real irreducible characters   smaller that $25$. Then $P$ is of finite rank. Thus, we may assume also that $\dim \LL(P)$ is  as small as possible. If $P$ is not just infinite, by Corollary \ref{jiq}, any of  its just infinite quotients satisfies our hypothesis. Thus, assume also that $P$ is just infinite. 
Then $P$ is an open subgroup of $\Aut(\LL)$,  where
$\LL= \LL(P)$ is a  semi-simple finite dimensional Lie $\Q_2$-algebra all whose simple components are isomorphic. Also, 
by Corollary \ref{nsol} $P$ is non-solvable.
Note also that $P$ is  torsion free, because all torsion elements of $P$ are in $rad_f(P)$ (Lemma \ref{propertiesRfinite}(3)).  
 
If $P$ is   not a Sylow pro-2 subgroup of $\Aut(\LL)$, then  there exists $P<Q\le  \Aut(\LL)$ such that $|Q:P|=2$. 
By Theorem \ref{embedding}, there exists $a\in Q$ such that $C_P(a)$ has an odd number of real characters and $|\Irr_r(C_P(a)|\le |\Irr_r(P)|$. 
Observe that $a$ acts non-trivially on $\LL $ and so the dimension of $\LL(C_P(a))\cong C_{\LL}(a)$ is smaller  than the dimension of $\LL$. This contradicts to our choice of $P$.
   
On the other hand, if $P$ is   a Sylow pro-2 subgroup of $\Aut(\LL)$ then, by Theorem
\ref{sylow}, $\LL(P)\cong \sli_1(D)$ for some division $\Q_2$-algebra $D$. Corollary \ref{kle25} implies that $D$  has   dimension 9 over its center  $\Q_2$. But then Theorem \ref{d3}  states   that $|\Irr_r(P)|=25>23$, a contradiction.
  
\end{proof}

\section{Further comments}
In this section we describe several possible directions for further research. As we have  proved the ``minimal" pro-2 group with  odd number of conjugacy classes belong to the  Sylow subgroups of $\PGL_1(D)$ where $D$ has dimension at least 9 over its center. In particular, these groups have at least 3 generators. So we suggest the following problem.

\begin{Conjecture} Let $k$ be an odd number. Then there exists only a finite number of finite 2-groups generated by 2 elements with exactly $k$ real conjugacy classes.
\end{Conjecture}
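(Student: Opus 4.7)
The plan is to follow the strategy of Theorem A while keeping track of the minimal number of topological generators throughout the reduction. By the inverse-limit construction used in the first theorem of Section~8, the conjecture for a given odd $k$ is equivalent to the non-existence of an infinite topologically 2-generated pro-$2$ group $P$ with $|\Irr_r(P)|=k$. Since quotients of 2-generated pro-$2$ groups remain 2-generated, and Corollary~\ref{jiq} guarantees that odd $|\Irr_r|$ descends to just infinite quotients, I may assume $P$ is just infinite, 2-generated, non-solvable (Corollary~\ref{nsol}), and torsion-free of finite rank (Lemma~\ref{propertiesRfinite}). The analysis of Section~7 then places $P$ as an open subgroup of $\Aut_{\Q_2}(\LL)$ with $\LL\cong\sli_1(D)$ for some $K$-central division algebra $D$ of odd index $d$, and necessarily $d\ge 3$ since $d=1$ would make $\LL=0$.

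The new input, not needed in Theorem A, is a lower bound $d(Q)\ge 3$ on a Sylow pro-$2$ subgroup $Q$ of $\Aut_{\Q_2}(\LL)$ containing $P$. Let $U_1$ denote the Sylow pro-$2$ subgroup of $\PGL_1(D)\cong\Aut_K(\LL)$, so (up to conjugacy) $U_1=Q\cap\Aut_K(\LL)$. By Proposition~\ref{pgl1d}(3), the Frattini-type quotient $U_1/U_2\cong\F_{q^d}^+$ is elementary abelian of $\F_2$-dimension $rd$, where $q=2^r$ is the residue cardinality of $K$. Since $d\ge 3$, this forces $d(U_1)\ge rd\ge 3$. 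In the principal case $K=\Q_2$ one has $Q=U_1$ and hence $d(Q)\ge 3$ directly; for general $K$ the extension $1\to U_1\to Q\to Q/U_1\to 1$ with $Q/U_1\le\Aut(K/\Q_2)$ of 2-part bounded by $[K:\Q_2]_2$ should still yield $d(Q)\ge 3$ via a Schreier-type bound or a direct analysis of the action of $Q/U_1$ on $U_1/U_2$. In particular, no such $Q$ is itself 2-generated.

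The main obstacle lies in forcing $P=Q$. In the proof of Theorem A the analogous step uses Theorem~\ref{embedding}: whenever $P$ is properly contained in a pro-$2$ subgroup $R$ of $\Aut(\LL)$ with $[R:P]=2$, one finds $a\in R\setminus P$ so that $C_P(a)$ has $|\Irr_r|$ bounded by $|\Irr_r(P)|$ and strictly smaller Lie-algebra dimension; the induction on $\dim\LL$ then terminates at $P=Q$, where the bounds in Corollary~\ref{kle25} and Theorem~\ref{d3} supply the contradiction. For the present conjecture the snag is that $C_P(a)$ need not be 2-generated, so the 2-generation hypothesis is lost at the very first step of the induction and the strategy collapses. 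Resolving this will, in my view, require either a \emph{refinement} of Theorem~\ref{embedding} producing an element $a$ for which $d(C_P(a))$ is controlled in terms of $d(P)$ (for instance by choosing $a$ to centralize a selected portion of a topological generating set of $P$), or a direct analysis of the open 2-generated subgroups of each admissible Sylow pro-$2$ $Q$: by Lemma~\ref{ccs} and finite rank there are only finitely many such subgroups up to conjugacy at each bounded index, and an explicit enumeration in the style of the subquotient arguments in Theorems~\ref{d3} and~\ref{sylow} could show that none of them carries an odd number of real irreducible characters. The bookkeeping involved in either route is precisely what, to my mind, keeps the statement in the conjecture column rather than the theorem column.
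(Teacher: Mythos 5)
The statement you were given is not proved anywhere in the paper: it appears in the closing ``Further comments'' section as an open conjecture, motivated by the observation that the minimal examples found in Theorem B sit inside Sylow pro-$2$ subgroups of $\PGL_1(D)$ with $\dim_{\Q_2}D\ge 9$, which require at least $3$ generators. Your proposal, quite properly, does not claim to prove it either; what you have written is a correct diagnosis of why the machinery of Theorem A does not immediately yield the $2$-generated refinement. Your reduction to an infinite topologically $2$-generated pro-$2$ group $P$ with $|\Irr_r(P)|=k$ is sound (fathers and quotients of $2$-generated finite $2$-groups are again $2$-generated, so the graph argument of the first theorem of Section 9 restricts correctly to the $2$-generated subgraph, and the resulting inverse limit is topologically $2$-generated), and your computation $d(U_1)\ge \dim_{\F_2}(U_1/U_2)=rd\ge 3$ from Proposition~\ref{pgl1d}(3) is exactly the observation behind the authors' remark that ``these groups have at least 3 generators.''

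The genuine gap is the one you name yourself, and it is the real one: Theorem~\ref{embedding} replaces $P$ by $C_P(a)$ for a suitable involution $a$, and nothing in its proof controls $d(C_P(a))$ in terms of $d(P)$, so the $2$-generation hypothesis is lost at the very first step of the descent on $\dim\LL(P)$ and cannot be reinstated when the induction bottoms out at a Sylow pro-$2$ subgroup of $\Aut(\sli_1(D))$. (A secondary point: your phrase ``the analysis of Section 7 then places $P$ as an open subgroup of $\Aut_{\Q_2}(\LL)$ with $\LL\cong\sli_1(D)$'' is slightly premature, since Theorem~\ref{sylow} is invoked in the paper only after $P$ has been forced to be Sylow; you do acknowledge this in your third paragraph.) Until one of the repairs you sketch is carried out --- a version of Theorem~\ref{embedding} that tracks generator numbers, or a direct classification of the $2$-generated open subgroups of the admissible Sylow pro-$2$ groups --- the statement remains where the authors put it, in the conjecture column. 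In short: there is no proof in the paper to compare against, your outline is consistent with the paper's own heuristics, and the obstruction you identify is genuine.
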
 

In a similar fashion to Lemma \ref{propertiesRfinite} we believe that the following holds.

\begin{Conjecture}
Let $G$ be a finite $2$-group. Then $\rk (G)$ is $q$-bounded, where $q$ is the number of rational characters of $G$. 
\end{Conjecture}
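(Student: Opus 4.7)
The plan is to mimic the proof of Lemma~\ref{fgroupsrclases}(2), which gives the analogous statement for the number of real irreducible characters. Set $q = |\Irr_{\Q}(G)|$. By exercise~7 of Chapter~2 of \cite{DDMS}, it suffices to show that the number of conjugacy classes of involutions is $q$-bounded in every quotient of $G$. Since a rational character of a quotient $G/N$ inflates to a rational character of $G$, we have $|\Irr_{\Q}(G/N)| \le q$ for every $N \trianglelefteq G$, so the problem reduces to bounding the number of involution classes of an arbitrary finite $2$-group $H$ in terms of $|\Irr_{\Q}(H)|$. Moreover, every involution $x \in H$ satisfies $x^k = x$ for all odd $k$, so its conjugacy class is rational; hence the number of conjugacy classes of involutions of $H$ is at most the number of rational conjugacy classes of $H$, and what is left is to bound the latter in terms of $|\Irr_{\Q}(H)|$.

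This last step is where I expect the main obstacle, and it is precisely the obstruction pointed out by the authors in the introduction. In the real case, the identity $r(H) = |\Irr_r(H)|$ (Lemma~\ref{Brauer}) follows from Brauer's Theorem~6.32 of \cite{I} applied to the cyclic action of complex conjugation on the sets of conjugacy classes and irreducible characters. For rational values, however, one must consider the Galois group $\Gamma = \Gal(\Q(\zeta_{2^n})/\Q) \cong (\Z/2^n)^{*}$, which is non-cyclic as soon as $n \ge 3$. Applying Brauer's lemma to each element of $\Gamma$ individually only yields that the permutation representations of $\Gamma$ on $\mathrm{Cl}(H)$ and on $\Irr(H)$ are isomorphic as $\mathbb{C}[\Gamma]$-modules; this determines the total number of $\Gamma$-orbits on each side, but not the number of singleton orbits (that is, the number of rational classes or rational characters), so equality between these two quantities does not follow directly.

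To circumvent this I would attempt two routes. The more direct one is to argue via permutation characters: for each involution class $[t]$ of $H$, the character $\pi_{[t]}$ of $H$ acting on $[t]$ by conjugation takes integer values and therefore lies in the $\Z$-span of $\Irr_{\Q}(H)$; it would suffice to prove that the collection $\{\pi_{[t]} : [t] \text{ an involution class of } H\}$ spans a $\Q$-subspace of dimension bounded by $|\Irr_{\Q}(H)|$, for instance by establishing its linear independence. The more ambitious approach is to establish the strong form of Brauer's permutation lemma for finite $2$-groups, asserting that $\mathrm{Cl}(H)$ and $\Irr(H)$ are isomorphic as $\Gamma$-sets; this would force equality of singleton-orbit counts and reduce the remainder of the argument to a line-by-line copy of Lemma~\ref{fgroupsrclases}(2). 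In either case, once the bound on rational classes by rational characters is in place, the rank bound $\rk(G) \le f(q)$ follows by applying exercise~7 of Chapter~2 of \cite{DDMS} to all quotients of $G$.
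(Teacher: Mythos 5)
This statement is not proved in the paper: it appears in the final section as an open conjecture, and the authors give no argument for it. So there is nothing to compare your proposal against; the only question is whether it actually establishes the result, and it does not --- as you yourself essentially acknowledge. Your reduction is sound and mirrors the proof of Lemma \ref{fgroupsrclases}(2): by exercise 7 of Chapter 2 of \cite{DDMS} it suffices to bound the number of conjugacy classes of involutions in every quotient, every involution class is rational, and $|\Irr_{\Q}(G/N)|\le q$ for every $N\nor G$. The missing step is exactly the obstruction the authors flag in the introduction: unlike the real case, the number of rational classes of a finite $2$-group need not equal the number of its rational irreducible characters, because the relevant Galois group $\Gal(\Q(\zeta_{2^n})/\Q)\cong \Z/2\times\Z/2^{n-2}$ is non-cyclic for $n\ge 3$, and Brauer's permutation lemma only controls the fixed points of single elements.

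Neither of your two proposed repairs closes this gap. The ``ambitious'' route is a dead end: the strong form of Brauer's permutation lemma fails for $2$-groups --- there exist finite $2$-groups in which the numbers of rational classes and rational irreducible characters differ, which is precisely why the authors state this as a conjecture rather than prove it as they did Lemma \ref{fgroupsrclases}(2). The ``direct'' route contains an error: an integer-valued (indeed any rational-valued) character is a $\Z$-linear combination of the Galois orbit sums $\sum_{\sigma}\chi^{\sigma}$, not of the rational irreducible characters alone, so the permutation characters $\pi_{[t]}$ live in a space whose dimension is the number of Galois orbits on $\Irr(H)$. Even if the $\pi_{[t]}$ were linearly independent, you would only bound the number of involution classes by the number of Galois orbits, which is not $q$-bounded. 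A genuine proof would have to bound the number of involution classes by $|\Irr_{\Q}(H)|$ through some mechanism that avoids the (false) equality of rational class and character counts; this remains open.
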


\bibliographystyle{amsplain}

\end{document}